\newtheorem*{rep@theorem}{\rep@title}
\newcommand{\newreptheorem}[2]{
\newenvironment{rep#1}[1]{
 \def\rep@title{#2 \ref{##1}}
 \begin{rep@theorem}}
 {\end{rep@theorem}}}
\newtheorem{theorem}{Theorem}[section]
\newtheorem{lemma}[theorem]{Lemma}
\theoremstyle{definition}
\newtheorem{definition}[theorem]{Definition}
\newtheorem{example}[theorem]{Example}
\theoremstyle{remark}
\newtheorem{remark}[theorem]{Remark}
\numberwithin{equation}{section}
\theoremstyle{plain}
\newtheorem{corollary}[theorem]{Corollary}
\newtheorem{proposition}[theorem]{Proposition}
\newcommand{\R}{\mathbb{R}}
\newcommand{\sn}{\mathrm{sn}}
\DeclareMathOperator{\Ric}{Ric}
\DeclareMathOperator{\Hess}{Hess}
\begin{document}
\title{On the geometry of Riemannian manifolds with density}

\author{William Wylie}
\address{215 Carnegie Building\\
Dept. of Math, Syracuse University\\
Syracuse, NY, 13244.}
\email{wwylie@syr.edu}
\urladdr{https://wwylie.expressions.syr.edu}
\thanks{The first author was  supported by a grant from the Simons Foundation (\#355608, William Wylie).}

\author{Dmytro Yeroshkin}
\address{215 Carnegie Building\\
Dept. of Math, Syracuse University\\
Syracuse, NY, 13244.}
\email{dyeroshk@syr.edu}
\urladdr{https://dyeroshk.mysite.syr.edu}

\dedicatory{Dedicated to Frank Morgan}

\keywords{}
\begin{abstract}
We introduce a new geometric approach to a manifold equipped with a smooth density function  that takes a torsion-free affine connection, as opposed to a weighted  measure or Laplacian, as the fundamental object of study.  The connection motivates new versions of the volume and Laplacian comparison theorems that are valid for  the $1$-Bakry-\'Emery Ricci tensor, a weaker assumption than has previously been considered in the literature.  As applications we prove new generalizations of Myers' theorem and Cheng's diameter rigidity result.   We also investigate the holonomy groups of the weighted connection.  We show that they are more general than the Riemannian holonomy, but also exhibit some of the same structure.  For example,  we obtain a generalization of the de Rham splitting theorem as well as new rigidity phenomena for parallel vector fields.  A general feature of all of our rigidity results is that  warped or twisted product splittings are characterized, as opposed to the usual isometric products. 
\end{abstract}

\maketitle

\section{Introduction}

Ricci curvature for a Riemannian manifold equipped with a smooth positive density function $e^{-f}$ was first considered by Lichnerowicz \cites{Lichnerowicz70, Lichnerowicz72} and was systematically studied and vastly generalized by Bakry-\'Emery \cite{BE} and their collaborators. Their approach is to study  a weighted Laplacian and the curvature is defined to provide a Bochner formula for this operator acting on functions. This is also often viewed as the study of a manifold with smooth measure where  the measure is defined  so that  the weighted Laplacian is self-adjoint.  This approach has been extraordinarily fruitful as the Bakry-\'Emery Ricci tensors have become a fundamental concept in probability, analysis, and geometry and are important in Ricci flow, optimal transport, isoperimetric problems, and general relativity. In fact, Bakry-\'Emery's work is vastly more general, as they make sense of Bochner formulas for a much larger class of operators. Given the reach of their work, it is remarkable that their ideas in the very special case of manifolds with density has had so many applications. 

With the many applications of weighted Ricci curvature, it is also desirable to generalize other aspects of Riemannian geometry to the weighted setting. A standard introductory course in Riemannian geometry shows how the subject flows naturally from the existence of the unique Levi-Civita torsion free and compatible affine connection. In this paper we give a new approach to manifolds with density which also takes a torsion free affine connection as the fundamental object. 

Let $\nabla$ the Levi-Civita connection of a Riemannian manifold $(M,g)$. For a one-form $\alpha$ define $\nabla^{\alpha}_U V = \nabla _U V - \alpha(U)V - \alpha(V) U$. $\nabla^{\alpha}$ is a torsion free affine connection, moreover it is \emph{projectively equivalent} to $\nabla$, meaning that $\nabla^{\alpha}$ has the same geodesics, up to re-parametrization, as $\nabla$. In fact, a result of Weyl \cite{Weyl} states that any torsion-free connection projectively equivalent to $\nabla$ is of the form $\nabla^{\alpha}$ for some $\alpha$. Any connection $\widetilde{\nabla}$ has  a well defined $(1,3)$-curvature tensor and $(0,2)$-Ricci tensor given by the formulae 
\[
  R^{\widetilde{\nabla}}(X,Y) Z = \widetilde{\nabla}_X \widetilde{\nabla}_Y Z - \widetilde{\nabla}_Y \widetilde{\nabla}_ X Z - \widetilde{\nabla}_{[X,Y]} Z, 
\]
and $\mathrm{Ric}^{\widetilde{\nabla}}(Y,Z) = \mathrm{Tr}\left[X \rightarrow R^{\widetilde{\nabla}}(X,Y)Z \right]$.
 
To see the link to Bakry-\'Emery Ricci curvature, let $e^{-f}$ be a positive density function on M. The $N$-Bakry-\'Emery Ricci tensor is defined as 
\[
  \Ric_f^N = \Ric +\Hess f - \frac{df\otimes df}{N-n}, 
\]
where $N$ is a constant that is also allowed to be infinite, in which case we write $\Ric_f^{\infty} = \Ric +\Hess f$. For a manifold of dimension larger than 1, if we let $\alpha = \frac{df}{n-1}$, then we recover a Bakry-\'Emery Ricci tensor, as a simple calculation (see Proposition \ref{Prop:Curvature}) shows that 
\[
  \mathrm{Ric}^{\nabla^{\alpha}} = \mathrm{Ric}_g + \Hess f + \frac{df \otimes df}{n-1} = \Ric_f^1.
\]
In other words, the Ricci tensor of a projectively equivalent connection is exactly the $1$-Bakry-\'Emery Ricci tensor. 

The Bakry-Emery Ricci tensor has traditionally been studied for values of the parameter $n < N \leq \infty$, and this is where bounds on Bakry-Emery Ricci tensors are equivalent to the curvature dimension condition as defined by Bakry-\'Emery.    There are many recent papers on this condition, some that are most relevant to the results of this paper are  \cites{Lott, Morgan, Morgan3, MunteanuWang, WeiWylie}.  Also see chapter 18  of \cite{MorganBook} and  the references there-in.  

Note that a lower bound on $\mathrm{Ric}_f^1$ is a weaker condition than $\mathrm{Ric}_f^N$ for $n < N \leq \infty$. There is an emerging body of research on lower bounds on $\mathrm{Ric}_f^N$ when $N<n$. The first papers investigating the case $N<0$ \cites{Ohta, KolesnikovMilman} appeared almost simultaneously.  In \cite{Ohta}, Ohta extends results  involving optimal transport and lower bounds on $\mathrm{Ric}_f^N$ when $N<0$ along with extending the  Bochner inequality, eigenvalue estimates,  and the Brunn-Minkowski inequality.  In \cite{KolesnikovMilman} Kolesnikov-Milman also extend the Poincare and Brunn-Minkowski inequality for manifolds with boundary when $N<0$.  Milman also extends the Heintze-Karcher Theorem, isoperimetric inequality, and functional inequalities for $N<1$ in \cite{Milman1}.  In \cite{Ohta2} it is also established that lower bounds on $\mathrm{Ric}_f^N$, $N\leq 0$ are equivalent to curvature-dimension inequalities as defined by  Lott-Vilanni \cite{LV} and Sturm \cites{Sturm1, Sturm2}. Also see  \cites{Klartag, Milman2, OhtaTakatsu1, OhtaTakatsu2}.  For earlier related work for measures on Euclidean space see \cites{Borell, BrascampLieb}. The first results for $\mathrm{Ric}_f^1$ were proven by first author who showed generalizations of the splitting theorem \cite{WylieSplitting} and Myers' theorem \cite{WylieSec}, we will discuss these results below.   The definition of Ricci curvature for  singular torsion-free affine connections has also been investigated recently by Lott \cite{Lott2}. 

The connection $\nabla^{\alpha}$ also recovers a notion of weighted sectional curvature that was introduced by the first author.  Namely, if we let $X$ be the dual vector field to $\alpha$ coming from the metric $g$, then 
\[
  g(R^{\nabla^{\alpha}} (U, V)V, U) = \mathrm{sec}_g(U,V) + \frac{1}{2} L_X g(U,U) + g(X,U)^2.
\]
This curvature was called $\overline{\sec}_X$ in \cite{WylieSec} and was further studied in \cite{KennardWylie}. The study of weighted sectional curvature from the perspective of the connection $\nabla^{\alpha}$ will be the subject of a different paper. 

The fact the curvature quantities $\mathrm{Ric}_f^1$ and $\overline{\sec}_X$ come from the connection not only gives motivation for their study, but also introduces a number of new tools. For example, the connection $\nabla^{\alpha}$ gives a preferred re-parametrization of geodesics, which in turn can be used to define new global ``re-parametrized distance'' function $s(p,q)$ which we show plays a similar role to the distance function for comparison geometry theorems. We also show that, when $\alpha$ is closed and $M$ is orientable, $\nabla^{\alpha}$ admits a parallel volume form. This gives a natural measure which is a slightly different from the weighted measure usually used in the study of Bakry-\'Emery Ricci curvature. Combining this re-parametrized volume with the re-parametrized distance gives generalizations of the volume and Laplacian comparison theorems to $\mathrm{Ric}_f^1$ from which we obtain a new Myers' theorem along with a new diameter rigidity result. The connection also gives a weighted concept of parallelism encoded in its holonomy groups. We show these weighted holonomy groups are more general than the Riemannian ones, but also admit some similar structural properties, such as a generalization of the de Rham splitting theorem. 

In the next section we define these notions and state our main results in terms of these objects. Section~\ref{Sec:Connection} examines the basic properties of the connection $\nabla^\alpha$. Section~\ref{Sec:Comp} contains various comparison principles for manifolds with density. Section~\ref{Sec:Hol} contains a study of the holonomy group of $\nabla^\alpha$ as well as general discussion of parallel tensors.

\section{Definitions and statement of results}

\subsection{Re-parametrized distance}\label{Sec:ReparDist}

In this section we assume $\alpha$ is a closed form and write $\alpha = \frac{df}{n-1}$. The connection $\nabla^{\alpha} $ gives rise to a re-parametrization of the geodesics. We normalize these reparametrized geodesics in the following way. 
\begin{definition}
$\widetilde{\gamma}:[0,S] \rightarrow M$ is a \emph{normalized} $\alpha$-geodesic if 
\begin{enumerate}
\item $\widetilde{\gamma}$ is a re-parametrization of a minimizing unit speed Riemannian geodesic $\gamma$, and

\item $\widetilde{\gamma} = \gamma \circ s^{-1}$ where $s(t) = \int_0^t exp\left( \frac{-2f(\gamma(r))}{n-1} \right) dr$.
\end{enumerate}
\end{definition}

 For points $p, q\in M$ define the ``re-parametrized distance" between $p$ and $q$ as the infimum of the time it takes to travel from $p$ to $q$ along a normalized $\alpha$-geodesic. That is, 
\[
  s(p,q) = \inf \left\{ s: \widetilde{\gamma}(0) = p, \widetilde{\gamma}(s) = q\right\},
\]
where the infimum is taken over all normalized $\alpha$-geodesics $\widetilde{\gamma}$. 
 Assuming the metric $g$ is complete, $s$ is clearly finite and well-defined from basic properties of Riemannian geodesics. Let $s_p (\cdot) = s(p, \cdot)$. If $q$ is not a cut point to $p$, then there is a unique minimal geodesic from $p$ to $q$ and $s_p$ is smooth in a neighborhood of $q$ as can be computed by pulling the function back by the exponential map at $p$. Note that $s(p,q) \geq 0$ and is zero if and only if $p=q$ and $s(p,q) = s(q,p)$. However, $s(p,q)$ does not define a metric since it does not satisfy the triangle inequality. 
 
There is also a new natural normalization of the curvature coming from the re-parametrized geodesics. Namely, for a normalized $\alpha$-geodesic we have $\frac {d\widetilde{\gamma}}{ds} = e^{\frac{2f}{n-1}} \frac {d{\gamma}}{dr}$ so, 
\[
  \mathrm{Ric}^{\nabla^{\alpha}} \left( \frac{d\widetilde{\gamma}}{ds}, \frac{d\widetilde{\gamma}}{ds}\right) \geq (n-1)K \quad \Longleftrightarrow \quad \mathrm{Ric}_f^1\left(\frac {d{\gamma}}{dr}, \frac {d{\gamma}}{dr}\right) \geq (n-1)K e^{\frac{-4f}{n-1}}. 
\]

Our first result is the following generalization of Myers' theorem involving renormalized distance and the curvature bound. 

\begin{theorem}[Weighted Myers' Theorem]\label{Myers} 
Let $(M^n,g)$, $n>1$, be a complete Riemannian manifold and let $\alpha$ be a closed one-form, $\alpha = \frac{df}{n-1}$. Suppose that there is $K>0$ such that $\mathrm{Ric}_f^1 \geq (n-1)K e^{\frac{-4f}{n-1}}g$ then $s(p,q) \leq \frac{\pi}{\sqrt{K}}$ for all $q \in M.$ 
\end{theorem}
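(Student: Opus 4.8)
The plan is to run the classical second-variation (Synge–Myers) argument along a minimizing Riemannian geodesic, but with a test function adapted to the weight, so that the reparametrization built into $s$ converts the resulting weighted index form into the ordinary one. Fix $q\in M$ and, using completeness, let $\gamma\colon[0,L]\to M$ be a unit-speed minimizing Riemannian geodesic from $p$ to $q$; its normalized reparametrization is then an $\alpha$-geodesic joining $p$ to $q$ in time $S=\int_0^L e^{-2f(\gamma(r))/(n-1)}\,dr$. Since $s(p,q)\le S$, it suffices to prove $S\le\pi/\sqrt{K}$. Write $h(r)=f(\gamma(r))$, choose a parallel orthonormal frame $E_1,\dots,E_{n-1}$ along $\gamma$ spanning $\gp^{\perp}$, and for a piecewise-smooth scalar $\phi$ vanishing at the endpoints set $V_i=\phi E_i$.

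Next I would feed these fields into the standard index form $I$ of the Riemannian length functional. Summing the second variation over $i$ gives $\sum_i I(V_i,V_i)=\int_0^L\big((n-1)(\phi')^2-\phi^2\Ric(\gp,\gp)\big)\,dr$. Because $\gamma$ is a geodesic, $\Hess f(\gp,\gp)=h''$ and $df(\gp)=h'$, so substituting $\Ric(\gp,\gp)=\Ric_f^1(\gp,\gp)-h''-\tfrac{(h')^2}{n-1}$, integrating the $h''$ term by parts (the boundary terms vanish since $\phi$ does), and completing the square turns the integrand into $(n-1)\big(\phi'-\tfrac{\phi h'}{n-1}\big)^2-\phi^2\Ric_f^1(\gp,\gp)$. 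The decisive move is the substitution $\phi=e^{h/(n-1)}\chi$, which annihilates the weight in the first term and yields $\sum_i I(V_i,V_i)=\int_0^L e^{2h/(n-1)}\big((n-1)(\chi')^2-\chi^2\Ric_f^1(\gp,\gp)\big)\,dr$.

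At this point I would invoke the hypothesis $\Ric_f^1(\gp,\gp)\ge (n-1)Ke^{-4h/(n-1)}$ and change variables from $r$ to $s$, using $ds/dr=e^{-2h/(n-1)}$. A direct check shows that both the gradient term and the curvature term transform so that all weight factors cancel, leaving $\sum_i I(V_i,V_i)\le (n-1)\int_0^S(\dot\chi^2-K\chi^2)\,ds$, where $\dot\chi=d\chi/ds$ — precisely the integrand of the unweighted Myers estimate in the variable $s$. Choosing $\chi(s)=\sin(\pi s/S)$, which makes $\phi$ vanish at both endpoints as required, evaluates the right-hand side to $(n-1)\tfrac{S}{2}\big(\tfrac{\pi^2}{S^2}-K\big)$. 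If $S>\pi/\sqrt{K}$ this is negative, so some $I(V_i,V_i)<0$, contradicting the minimality of $\gamma$; hence $S\le\pi/\sqrt{K}$ and therefore $s(p,q)\le\pi/\sqrt{K}$.

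I expect the main obstacle to be essentially bookkeeping: arranging the completion of the square and the substitution $\phi=e^{h/(n-1)}\chi$ so that they interlock exactly with the reparametrization $ds=e^{-2h/(n-1)}\,dr$, forcing the weight factor in the curvature bound and the one in the transformed measure to cancel and leave the scale-invariant constant $K$. Once the index form is expressed purely in the $s$-variable, the remaining argument is identical to the classical one.
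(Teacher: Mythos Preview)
Your proposal is correct and takes a genuinely different route from the paper. The paper derives the Myers bound as a corollary of its mean-curvature comparison: under the curvature hypothesis the reparametrized quantity $\lambda = e^{2f/(n-1)}\Delta_f r_p$ satisfies the Riccati inequality $d\lambda/ds \le -\lambda^2/(n-1) - (n-1)K$ along any minimal geodesic from $p$, hence $\lambda(s) \le (n-1)\sqrt{K}\cot(\sqrt{K}s)$; since the right side tends to $-\infty$ as $s\to\pi/\sqrt{K}$, so does $\Delta r_p$, contradicting smoothness of $r_p$ at any point off the cut locus with $s$-value exceeding $\pi/\sqrt{K}$. By contrast, you run the classical Synge--Myers second-variation argument directly, with the substitution $\phi = e^{h/(n-1)}\chi$ engineered so that the change of variables $ds = e^{-2h/(n-1)}\,dr$ absorbs all weight factors and reduces the summed index form to the unweighted one in the $s$-variable. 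Your approach is more elementary and self-contained for this statement alone; the paper's Riccati/Bochner route, however, simultaneously yields the weighted Laplacian comparison theorem, which it then reuses for the volume comparison and the diameter-rigidity result.
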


A corollary of the classical Myers' theorem is that the manifold must be compact. In contrast, it is possible for $s$ to be uniformly bounded on a non-compact complete Riemannian manifold. Recall that the connection $\nabla^{\alpha}$ is called \emph{geodesically complete} if all of the $\nabla^{\alpha}$-geodesics can be extended for all time. If we additionally assume completeness of the connection $\nabla^{\alpha}$ we obtain the following natural corollary. 

\begin{corollary} \label{CompMyers}
Let $(M^n,g)$, $n>1$, be a complete Riemannian manifold and let $\alpha= \frac{df}{n-1}$ be a closed one-form such that $\nabla^{\alpha}$ is geodesically complete. If $\mathrm{Ric}_f^1 \geq (n-1)K e^{\frac{-4f}{n-1}}g$ then $M$ is compact. 
\end{corollary}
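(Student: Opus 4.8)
The plan is to deduce compactness directly from the Weighted Myers bound of Theorem~\ref{Myers} together with geodesic completeness, by realizing $M$ as the continuous image of a single compact ball under the exponential map of $\nabla^{\alpha}$. Fix a base point $p \in M$ and let $\exp^{\alpha}_{p} \colon T_pM \to M$ denote the exponential map of the connection $\nabla^{\alpha}$, sending $v$ to $\sigma_v(1)$ where $\sigma_v$ is the $\nabla^{\alpha}$-geodesic with $\sigma_v(0)=p$ and $\dot\sigma_v(0)=v$. Since $\nabla^{\alpha}$ is geodesically complete, every such $\sigma_v$ extends to all affine times, so $\exp^{\alpha}_{p}$ is defined on all of $T_pM$ and is smooth, hence continuous.

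First I would record how the normalized $\alpha$-geodesics sit inside this picture. A normalized $\alpha$-geodesic $\widetilde\gamma$ is, up to this normalization, an affinely parametrized $\nabla^{\alpha}$-geodesic in the parameter $s$, so $\widetilde\gamma(s) = \exp^{\alpha}_{p}(s\, v_0)$ with $v_0 = \frac{d\widetilde\gamma}{ds}\big|_{s=0}$. From $\frac{d\widetilde\gamma}{ds} = e^{\frac{2f}{n-1}}\frac{d\gamma}{dr}$ and the fact that $\gamma$ is unit speed, every such initial velocity has the fixed norm
\[
  |v_0| = e^{\frac{2f(p)}{n-1}} =: c,
\]
which depends only on $p$. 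By the affine scaling $\sigma_{sv_0}(1)=\sigma_{v_0}(s)$, each point reached by a normalized $\alpha$-geodesic from $p$ at parameter value $s$ then lies in the image $\exp^{\alpha}_{p}(\overline{B}_{cs})$, where $\overline{B}_{\rho}\subset T_pM$ is the closed ball of radius $\rho$.

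Next I would combine this with Theorem~\ref{Myers}. The hypotheses are exactly those of the Weighted Myers' Theorem, so $s(p,q)\le \pi/\sqrt{K}$ for every $q\in M$. Since $(M,g)$ is complete, the infimum defining $s(p,q)$ is attained: the set of initial unit directions of minimizing Riemannian geodesics from $p$ to $q$ is a closed, hence compact, subset of the unit sphere in $T_pM$, and the functional $\gamma \mapsto \int_0^{d(p,q)} e^{-\frac{2f(\gamma)}{n-1}}\,dr$ is continuous on it. Therefore each $q$ is reached by some normalized $\alpha$-geodesic at a parameter $s(p,q)\le \pi/\sqrt{K}$, giving
\[
  q \in \exp^{\alpha}_{p}\!\left(\overline{B}_{R}\right), \qquad R = \frac{\pi}{\sqrt{K}}\,e^{\frac{2f(p)}{n-1}}.
\]
Hence $M = \exp^{\alpha}_{p}(\overline{B}_{R})$. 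As $\overline{B}_{R}$ is a compact subset of $T_pM$ and $\exp^{\alpha}_{p}$ is continuous and everywhere defined, $M$ is the continuous image of a compact set and is therefore compact.

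I expect the only genuinely delicate points to be bookkeeping rather than deep: verifying that the normalization $s$ really is the affine parameter of $\nabla^{\alpha}$ (so that $\widetilde\gamma(s)=\exp^{\alpha}_{p}(sv_0)$ is legitimate), and confirming that the infimum in the definition of $s(p,q)$ is attained so that $q$ genuinely lies in $\exp^{\alpha}_{p}(\overline{B}_{R})$ rather than merely in its closure. Both reduce to standard facts once geodesic completeness of $\nabla^{\alpha}$ is invoked; the essential input is simply that completeness converts the uniform bound on $s$ into a uniform bound on the length of $\nabla^{\alpha}$-exponential preimages.
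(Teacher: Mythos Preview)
Your argument is correct, but it follows a different route from the paper's. The paper deduces the corollary from Theorem~\ref{Myers} together with Proposition~\ref{Prop:scomplete}: that proposition shows, via a subconvergence argument on initial directions, that $\alpha$-completeness forces $s(p,q_i)\to\infty$ whenever $d(p,q_i)\to\infty$. Compactness then follows by contradiction, since the Myers bound $s(p,q)\le\pi/\sqrt{K}$ would be violated along any divergent sequence.

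Your approach instead realizes $M$ directly as the continuous image of a single closed ball under $\exp^{\alpha}_p$, using $\alpha$-completeness only to guarantee that $\exp^{\alpha}_p$ is globally defined. This is arguably cleaner for the corollary at hand and avoids the limiting argument entirely. The trade-off is that the paper's route yields Proposition~\ref{Prop:scomplete} as an independent lemma, which is reused elsewhere (for instance, to rephrase the splitting theorem, Theorem~\ref{Splitting}, in terms of $\alpha$-completeness). Your two ``bookkeeping'' points are indeed routine: that the $s$-parametrization is affine for $\nabla^{\alpha}$ is essentially the converse of Proposition~3.1, and attainment of the infimum in $s(p,q)$ follows from compactness of the set of minimizing directions in the unit sphere together with continuity of the integral.
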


\begin{remark}
Since bounded $f$ implies $\nabla^{\alpha}$ is complete, Theorem~\ref{Myers} also recovers \cite{WylieSec}*{Theorem 1.6} which states that if $\mathrm{Ric}_f^1 \geq (n-1)K$ and $f$ is bounded then $M$ is compact.
\end{remark}

The completeness of $\nabla^{\alpha}$ where $\alpha=\frac{df}{n-1}$ implies  $f$-completeness as defined in  \cite{WylieSplitting} and \cite{WoolgarWylie} (See Propositon~\ref{Prop:scomplete} below).  As such, the generalization of the Cheeger-Gromoll splitting theorem proven in \cite{WylieSplitting} can also be re-phrased in terms of the connection $\nabla^{\alpha}$. 

\begin{theorem}  \label{Splitting} \cite{WylieSplitting}*{Theorem 6.3}
Let $(M,g)$, $n>1$, be a complete Riemannian manifold and let $\alpha= \frac{df}{n-1}$ be a closed one-form such that $\nabla^{\alpha}$ is geodesically complete. If $\mathrm{Ric}_f^1 \geq 0$ and $(M,g)$ contains a line, then $(M,g)$ splits as a warped product.
\end{theorem}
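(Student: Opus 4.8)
The plan is to run the Cheeger--Gromoll argument with the reparametrized distance $s$ in place of the Riemannian distance, and the Laplacian $\Delta^\alpha$ associated to $\nabla^\alpha$ and its parallel volume form (passing to the orientation double cover if needed) in place of the Riemannian Laplacian. First I would reparametrize the given Riemannian line $\gamma$ to a normalized $\alpha$-geodesic $\widetilde{\gamma}$; since $\nabla^\alpha$ is geodesically complete and (by Proposition~\ref{Prop:scomplete}) the reparametrization is defined for all $s$, this yields a bi-infinite $\nabla^\alpha$-geodesic. Because $\gamma$ is globally $g$-minimizing and $s(p,q)$ is computed only along minimizing $g$-geodesics, $\widetilde{\gamma}$ is an \emph{$s$-line}: $s(\widetilde{\gamma}(a),\widetilde{\gamma}(b)) = |b-a|$ for all $a,b$.

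Next I would form the two Busemann functions $b^\pm(x) = \lim_{S\to\infty}\bigl(S - s(x,\widetilde{\gamma}(\pm S))\bigr)$. To establish convergence, one checks that $s(\cdot,q)$ satisfies the eikonal equation $|\nabla s(\cdot,q)|_{\bar g} = 1$ for the conformal metric $\bar g = e^{-4f/(n-1)}g$, so the approximants are uniformly $\bar g$-Lipschitz; moreover a first-variation computation shows that $\tfrac{d}{dS}\,s(x,\widetilde{\gamma}(S))$ equals the $g$-inner product of two $g$-unit vectors at $\widetilde{\gamma}(S)$ (the incoming minimizing direction and $\gp$), hence is $\le 1$, so $S\mapsto S - s(x,\widetilde{\gamma}(S))$ is nondecreasing and bounded above by $d_{\bar g}(x,\widetilde{\gamma}(0))$. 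The weighted Laplacian comparison of Section~\ref{Sec:Comp}, applied with $K=0$, gives $\Delta^\alpha s_{\widetilde{\gamma}(\pm S)} \le \tfrac{n-1}{s}$ away from the cut locus, so each $b^\pm$ is $\alpha$-subharmonic in the support (barrier) sense.

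I would then show $b^+ + b^- \le 0$ with equality along $\widetilde{\gamma}$, so that the $\alpha$-subharmonic function $b^+ + b^-$ attains an interior maximum and is therefore constant by the strong maximum principle; hence $b^+ + b^- \equiv 0$ and each $b^\pm$ is $\alpha$-harmonic. The equality case of the comparison (the Riccati equation for the $\nabla^\alpha$-shape operator of the level sets of $b:=b^+$ must be the $K=0$ model) together with the resulting smoothness forces the $\nabla^\alpha$-Hessian of $b$ to vanish, i.e. $db$ is $\nabla^\alpha$-parallel. Unwinding $\Hess^{\nabla^\alpha} b = \Hess b + \tfrac{1}{n-1}\bigl(df\otimes db + db\otimes df\bigr)$ turns this into a pointwise identity for the Riemannian Hessian of $b$ whose gradient flow lines are exactly the $\nabla^\alpha$-geodesics; integrating the flow of $\nabla b$ over a level set exhibits $(M,g)$ as a warped product over $\widetilde{\gamma}$, with the warping determined by $f$ — precisely the warped (rather than isometric) splitting asserted.

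The main obstacle is the inequality $b^+ + b^- \le 0$. In the classical proof this is immediate from the triangle inequality for $d$, but the excerpt emphasizes that $s$ is \emph{not} a metric and fails the triangle inequality, precisely because $s(p,q)$ measures $\bar g$-length along $g$-geodesics rather than along $\bar g$-minimizers, so that $s(x,q) \ge d_{\bar g}(x,q)$ with equality only in exceptional cases. I expect to recover the inequality not from a metric triangle inequality but from the $s$-minimality of the line together with the comparison: the concatenation of the minimizing $\nabla^\alpha$-geodesics from $x$ to $\widetilde{\gamma}(\pm S)$ is an $s$-competitor joining $\widetilde{\gamma}(-S)$ to $\widetilde{\gamma}(S)$, and one must show its $s$-length is at least $2S = s(\widetilde{\gamma}(-S),\widetilde{\gamma}(S))$ in the limit. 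Isolating the weak form of the triangle inequality that survives for this special line configuration is where the argument of \cite{WylieSplitting} does its real work, and it is the step I expect to be most delicate.
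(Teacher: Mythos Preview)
The paper does not prove this theorem; it is quoted from \cite{WylieSplitting}*{Theorem 6.3}, and the only argument supplied here is Proposition~\ref{Prop:scomplete}, which shows that geodesic completeness of $\nabla^\alpha$ implies the $f$-completeness hypothesis of \cite{WylieSplitting}. So you are attempting a full proof where the paper simply cites one.

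Your outline has a genuine strategic gap, which you flag but do not close. Building Busemann functions from the reparametrized distance $s$ forces you into the failure of the triangle inequality for $s$, and your proposed workaround---that the concatenation through $x$ is an ``$s$-competitor'' for $s(\widetilde\gamma(-S),\widetilde\gamma(S))$---does not work: by definition $s(p,q)$ is an infimum over \emph{minimizing $g$-geodesics from $p$ to $q$}, not over broken paths, so the concatenation is simply not admissible and yields no inequality. (Also, the operator you call $\Delta^\alpha$ is never defined in the paper; the comparison theory of Section~\ref{Sec:Comp} is entirely in terms of the drift Laplacian $\Delta_f$ acting on the \emph{Riemannian} distance $r$, not on $s$.)

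The remedy is to use the ordinary Riemannian Busemann functions $b^\pm(x)=\lim_{t\to\infty}\bigl(t-r(x,\gamma(\pm t))\bigr)$ along the given $g$-line. The Riemannian triangle inequality then gives $b^++b^-\le 0$ for free. The weighted comparison (Theorem~\ref{LapComp} with $K=0$) reads $\Delta_f r_q(x)\le e^{-2f(x)/(n-1)}(n-1)/s(q,x)$, and here is precisely where completeness of $\nabla^\alpha$ enters: by Proposition~\ref{Prop:scomplete}, $d(x,\gamma(t))\to\infty$ forces $s(x,\gamma(t))\to\infty$, so the barrier bound tends to $0$ and each $b^\pm$ is $\Delta_f$-subharmonic in the support sense. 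The strong maximum principle for the elliptic drift operator $\Delta_f$ gives $b^++b^-\equiv 0$, and the equality case of Lemma~\ref{Thm:Bochner} then produces the warped (rather than isometric) splitting. In short: build the Busemann functions from $r$, and let $s$ enter only on the right-hand side of the Laplacian comparison.
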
 

The function $s$ is also naturally related to a conformal change of metric. Let $h = e^{\frac{-4f}{n-1}} g$ then $s(p,q)$ is the smallest length in the $h$ metric of a minimal geodesic between $p$ and $q$ in the $g$ metric. As such, $s(p,q) \geq d^{h}(p,q)$. So Theorem~\ref{Myers} tells us that the diameter of the metric $h$ is less than or equal to $\frac{\pi}{\sqrt{K}}$. For this conformal diameter estimate we also obtain the following rigidity characterization. 

\begin{theorem} \label{DiamRigid}
Suppose that $(M^n,g,f)$, $n>1$, is complete and satisfies $\mathrm{Ric}_f^1 \geq (n-1) K e^{\frac{-4f}{n-1}}g$, $K>0$. If there are points $p$ and $q$ such that $d^h(p,q) = \frac{ \pi}{\sqrt{K}}$ then $(M,g)$ is a rotationally symmetric metric on the sphere. 
\end{theorem}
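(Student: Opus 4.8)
The plan is to run a two-point, Cheng-type argument, using the Weighted Myers' Theorem and the conformal bound $s \ge d^h$ to force equality throughout the comparison machinery of Section~\ref{Sec:Comp}. Fix $p,q$ with $d^h(p,q)=\pi/\sqrt{K}$ and set $m=\pi/\sqrt{K}$, $s_p=s(p,\cdot)$, $s_q=s(q,\cdot)$. Since $s_p\ge d^h(p,\cdot)$ and Theorem~\ref{Myers} gives $s_p\le m$, the hypothesis forces $s(p,q)=d^h(p,q)=m$; the minimizing $g$-geodesic $\gamma$ realizing $s(p,q)$ is then simultaneously $h$-minimizing and has total $s$-length $m$. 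The triangle inequality for the \emph{genuine} metric $d^h$, together with $s_p\ge d^h(p,\cdot)$ and $s_q\ge d^h(\cdot,q)$, gives the global bound
\[
u:=s_p+s_q\;\ge\;d^h(p,\cdot)+d^h(\cdot,q)\;\ge\;m,
\]
while additivity of the $s$-parameter along subarcs of $\gamma$ gives $u\equiv m$ on $\gamma$. Interior points of $\gamma$ are not $g$-cut points of $p$ or of $q$, so $s_p,s_q$ are smooth there.

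I would then upgrade $u\equiv m$ from $\gamma$ to all of $M$ by the strong maximum principle. Write $\theta(s)=(n-1)\sqrt{K}\cot(\sqrt{K}\,s)$ for the model mean curvature; the Laplacian comparison theorem of Section~\ref{Sec:Comp} gives, for the weighted Laplacian $\mathcal L$ of that section, the upper-barrier inequality $\mathcal L s_p\le\theta(s_p)$ (and likewise $\mathcal L s_q\le\theta(s_q)$). The function $\theta$ is strictly decreasing on $(0,m)$ and satisfies $\theta(m-s)=-\theta(s)$; hence the bound $s_q\ge m-s_p$ yields $\theta(s_q)\le\theta(m-s_p)=-\theta(s_p)$, so that $\mathcal L u\le\theta(s_p)+\theta(s_q)\le 0$ in the barrier sense. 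Thus $u$ is superharmonic and attains its minimum $m$ along $\gamma$, and the strong maximum principle --- with Calabi's barrier trick to cross the cut locus --- forces $u\equiv m$ on the connected manifold $M$.

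Equality $u\equiv m$ now propagates back through the chain: $\mathcal L u=0=\theta(s_p)+\theta(s_q)$ together with $\mathcal L s_p\le\theta(s_p)$ and $\mathcal L s_q\le\theta(s_q)$ force $\mathcal L s_p=\theta(s_p)$ and $\mathcal L s_q=\theta(s_q)$ identically. This is precisely the equality case of the comparison theorem; invoking its rigidity statement --- equality in the trace, combined with the lower bound $\Ric_f^1\ge (n-1)Ke^{-4f/(n-1)}g$ and the Cauchy--Schwarz equality in the Riccati trace identity --- pins the weighted shape operator of the level sets of $s_p$ to the scalar model value along every normalized $\alpha$-geodesic from $p$, hence pins the $\nabla^\alpha$-Jacobi fields to the model ones. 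In $g$-geodesic polar coordinates about $p$ this determines $g=dr^2+\psi(r)^2\,g_{S^{n-1}}$ with $f=f(r)$ radial, and since the model has its first conjugate point exactly at $q$, the exponential map exhibits $M$ as a topological sphere that is rotationally symmetric about the $p$--$q$ axis.

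I expect the main obstacle to be this last step: upgrading the scalar identity $\mathcal L s_p=\theta(s_p)$ to the pointwise statement that the weighted second fundamental form of the $s_p$-spheres is a multiple of the identity, so that the fibers are genuinely round and $f$ is radial. This requires the equality analysis of the \emph{matrix} Riccati comparison in the reparametrized, weighted setting, and care at the cut locus; by comparison the global lower bound $u\ge m$ and the monotonicity of $\theta$ driving the maximum principle are comparatively soft.
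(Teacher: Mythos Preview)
Your overall strategy---a Cheng-type two-point argument via the maximum principle, using $s_p+s_q\ge d^h(p,\cdot)+d^h(\cdot,q)\ge m$ and the antisymmetry $\theta(m-s)=-\theta(s)$---is exactly the paper's. But there is a concrete error in which function you feed to the maximum principle. The Laplacian comparison of Section~\ref{Sec:Comp} (Theorem~\ref{LapComp}) is for the \emph{Riemannian} distance: it says $\Delta_f r_p\le e^{-2f/(n-1)}m_K(s_p)$. There is no inequality of the form ``$\mathcal L s_p\le\theta(s_p)$'' in the paper, and none is easy to prove: the function $s_p(r,\theta)=\int_0^r e^{-2f(t,\theta)/(n-1)}\,dt$ has tangential derivatives through the dependence of $f$ on $\theta$, so $|\nabla s_p|$ is not constant and no clean Riccati-type bound is available for its Laplacian. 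Consequently your superharmonicity claim for $u=s_p+s_q$ is unsupported.

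The fix is simply to run the maximum principle on $r_p+r_q$ instead. Summing the two comparison inequalities gives
\[
\Delta_f(r_p+r_q)\le e^{-2f/(n-1)}\bigl(m_K(s_p)+m_K(s_q)\bigr)\le 0,
\]
the last step using your (correct) bound $s_p+s_q\ge m$. Now $r_p+r_q\ge d(p,q)$ by the \emph{Riemannian} triangle inequality, with equality along $\gamma$, so the minimum principle forces $r_p+r_q\equiv d(p,q)$: every $g$-geodesic from $p$ is minimizing and ends at $q$. Tracing equalities back gives both $s_p+s_q\equiv m$ and equality in the Laplacian comparison. The first yields $s_p(x)=d^h(p,x)$ for every $x$, whence (by the proposition immediately preceding the rigidity theorem) $\nabla f$ is parallel to every radial geodesic, so $f=f(r)$. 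The second, via Lemma~\ref{RigidLemma}, upgrades the scalar equality to the matrix equality $\Hess r=A(r)g_r$ and delivers the rotationally symmetric form $g=dr^2+e^{2f(r)/(n-1)}\sn_K^2(s(r))\,g_{S^{n-1}}$. Your anticipated ``main obstacle'' is thus handled in two pieces: radiality of $f$ comes from $s_p=d^h(p,\cdot)$, and the round fibers come from the equality case of Lemma~\ref{Thm:Bochner}.
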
 

Recall that Cheng \cite{Cheng} showed that a complete Riemannian manifold satisfies $\mathrm{Ric} \geq (n-1)Kg$ and $\mathrm{diam}_M = \frac{\pi}{\sqrt{K}}$ if and only if $(M,g)$ is a round sphere. We also give a complete characterization of the spaces satisfying the hypotheses of Theorem~\ref{DiamRigid} and there are examples which are not constant curvature. Theorem~\ref{DiamRigid} can thus be thought of as the analog in positive curvature to  Theorem~\ref{Splitting}. It would be interesting to know whether this result is true under the weaker condition that $s(p,q) = \frac{\pi}{\sqrt{K}}$. The main difficulty is that $s$ does not satisfy the triangle inequality. 

The re-parametrized distance $s$ also has meaning for a negative lower bound on $\mathrm{Ric}_f^1$. In fact, the results above follow from a  generalization of the Laplacian comparison theorem for the weighted Laplacian $\Delta_f = \Delta - D_{\nabla f}$ which is true for any constant $K$, see  Theorem~\ref{LapComp} below. 

\subsection{Volume Comparison}

Another insight coming from the connection is a natural volume comparison theory.  In \cite{WeiWylie} volume comparison theory was developed for  the $f$-volume, $\mathrm{Vol}_f(U) = \int_U e^{-f} dvol_g$,  for space satisfying $\mathrm{Ric}_f^{\infty} \geq Kg$ and $|f|$ is  bounded.  We  extend this theory to the weaker condition  $\mathrm{Ric}_f^1 \geq Kg$ and $|f|$ bounded.   We state only the special case here of the absolute volume comparison for $K=0$.  See Theorem~\ref{VolComp} and Corollary~\ref{Cor:VolCompBdd} below for the general statements. 

\begin{theorem} \label{VolCompBdd}
Suppose that $(M,g,f)$, $n>1$, satisfies $\mathrm{Ric}_f^1 \geq 0$ and let $f_{min}(r)$ be the minimum of $f$ on $B(p,r)$ then 
\[
  \mathrm{Vol}_f(B(p.r)) \leq \omega_nr^n e^{f(p) -2f_{min}(r)},
\]
where $\omega_nr^n$ is the volume of the ball in $n$-dimensional Euclidean space. In particular, 
\[
  \mathrm{Vol}(B(p,r)) \leq \omega_nr^n e^{2\left(f_{max}(r)- f_{min}(r)\right)},
\]
where $f_{max}(r)$ be the maximum of $f$ on $B(p,r).$
\end{theorem}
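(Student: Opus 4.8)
The plan is to run a Bishop--Gromov type argument adapted to the weighted volume element, organized around the re-parametrized arc length $s$. Fix $p$ and work in geodesic polar coordinates for $g$ around $p$, writing $dvol_g = \mathcal{A}(r,\theta)\,dr\,d\theta$ with $r$ the Riemannian arc length. The weighted density is then $e^{-f}\mathcal{A}$, and I introduce the quantity $\phi = (e^{-f}\mathcal{A})^{1/(n-1)}$. Along a fixed minimizing geodesic $\gamma_\theta$ the re-parametrized arc length satisfies $ds = e^{-2f/(n-1)}\,dr$, so that $\frac{d}{ds} = e^{2f/(n-1)}\frac{d}{dr}$.

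The heart of the argument is to show that $\phi$, viewed as a function of $s$, is concave. First I record the Riccati inequality for the Riemannian mean curvature $m = \Delta r = \partial_r \log \mathcal{A}$, namely $m' \le -\tfrac{m^2}{n-1} - \Ric(\gp,\gp)$, and substitute $\Ric(\gp,\gp) = \Ric_f^1(\gp,\gp) - \Hess f(\gp,\gp) - \tfrac{(\partial_r f)^2}{n-1}$. Passing to the weighted mean curvature $m_f = m - \partial_r f = \partial_r \log(e^{-f}\mathcal{A}) = (n-1)\phi'/\phi$ and invoking $\Ric_f^1 \ge 0$, the inhomogeneous terms collapse and I obtain $m_f' \le -\tfrac{m_f^2}{n-1} - \tfrac{2\partial_r f}{n-1}\, m_f$. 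Rewritten in terms of $\phi$ this reads $\bigl(e^{2f/(n-1)}\phi'\bigr)' \le 0$ in $r$, i.e. $\tfrac{d\phi}{ds}$ is non-increasing in $s$; this is precisely the content of the Laplacian comparison Theorem~\ref{LapComp} in the case $K=0$. Since $\mathcal{A}\sim r^{n-1}$ and $s \sim e^{-2f(p)/(n-1)}r$ as $r\to0$, the initial conditions are $\phi(0)=0$ and $\tfrac{d\phi}{ds}(0) = e^{f(p)/(n-1)}$, so concavity yields the pointwise bound $\phi(s) \le e^{f(p)/(n-1)}s$, that is $e^{-f}\mathcal{A} \le e^{f(p)} s^{n-1}$.

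It then remains to convert this $s$-estimate into one in the Riemannian radius. For $r\le R$ every point of $\gamma_\theta([0,r])$ lies in $B(p,R)$, so $f \ge f_{min}(R)$ there and hence $s(r,\theta) = \int_0^r e^{-2f/(n-1)}\,dr \le r\,e^{-2f_{min}(R)/(n-1)}$, giving $e^{-f}\mathcal{A} \le e^{f(p)-2f_{min}(R)}\,r^{n-1}$. Integrating over the ball, cutting off each ray at its cut distance and setting the density to zero beyond (so the inequality is preserved), produces $\mathrm{Vol}_f(B(p,R)) \le e^{f(p)-2f_{min}(R)}\,\mathrm{Vol}(S^{n-1})\,\tfrac{R^n}{n} = \omega_n R^n e^{f(p)-2f_{min}(R)}$, the first claim. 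The second follows from $dvol_g = e^{f}\,(e^{-f}dvol_g) \le e^{f_{max}(R)}\,e^{-f}dvol_g$ on $B(p,R)$ together with $f(p)\le f_{max}(R)$.

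I expect the main obstacle to be the second step: isolating the correct weighted quantity $\phi=(e^{-f}\mathcal{A})^{1/(n-1)}$ and the re-parametrization $ds=e^{-2f/(n-1)}dr$ that together turn the $\Ric_f^1\ge 0$ Riccati inequality into plain concavity of $\phi$ in $s$. This is exactly where the non-standard conformal weight $e^{-4f/(n-1)}$ of the comparison is forced, and why the estimate is naturally stated in terms of $f_{min}$ and $f(p)$. The cut-locus bookkeeping in the final integration and the passage to the unweighted estimate are routine.
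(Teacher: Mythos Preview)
Your proof is correct and follows essentially the same route as the paper. The paper derives the Riccati inequality $\tfrac{d\lambda}{ds}\le -\tfrac{\lambda^2}{n-1}$ for $\lambda=e^{2f/(n-1)}\Delta_f r$ (Lemma~\ref{Thm:Bochner}), which for $K=0$ is exactly your concavity of $\phi=(e^{-f}\mathcal{A})^{1/(n-1)}$ in $s$; it then integrates to the volume-element bound (Lemma~\ref{VolElement}), applies the same estimate $s\le e^{-2f_{\min}/(n-1)}r$ (equation~\eqref{sfbound}), and specializes Corollary~\ref{Cor:VolCompBdd} to $K=0$---the only difference is that the paper passes through the general $m_K$ comparison before specializing, whereas you go straight to the $K=0$ concavity statement.
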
  

Yang  showed that if $\mathrm{Ric}_f^{\infty} \geq 0$ and $|f|$ is  bounded, then $\mathrm{Vol}_f(B(p,r))$ has polynomial growth of degree at most $n$ \cite{Yang}.  Theorem~\ref{VolCompBdd}  extends this result to $\mathrm{Ric}_f^{1}$, and improves it,  even in the $\mathrm{Ric}_f^{\infty}$ case,  to only require a lower bound on $f$.  We also obtain the same topological application that, if $\mathrm{Ric}_f^{1}\geq 0$ and $f$ is bounded below, then $b_1(M) \leq n$ and any finitely generated subgroup of the fundamental group is polynomial growth of degree at most $n$. 

The connection $\nabla^{\alpha}$ also  provides a new approach to volume comparison theory which yields a sharp relative volume comparison that assumes no a priori bounds on $f$.  This  comes from the following observation. 

\begin{proposition} \label{Prop:ParVolForm}
Let $(M,g)$ be an orientable Riemannian manifold with Levi-Civita connection $\nabla$ and smooth one-form $\alpha$. The connection $\nabla^{\alpha}$ admits a parallel volume form if and only if $\alpha$ is closed. Moreover, if  $n>1$ and $\alpha=\frac{df}{n-1}$ the form $e^{-\frac{n+1}{n-1} f} \mathrm{dvol_g}$ is parallel with respect to $\nabla^{\alpha}$.
\end{proposition}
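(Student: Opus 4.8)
The plan is to compute directly how $\nabla^{\alpha}$ differs from $\nabla$ on top-degree forms and then to reduce the parallel condition to a first-order equation on a single function. Write $\nabla^{\alpha} = \nabla + A$, where the difference tensor is $A(U,V) = -\alpha(U)V - \alpha(V)U$; note it is symmetric in $U,V$, consistent with $\nabla^{\alpha}$ being torsion-free. Since any connection acts on a volume form $\omega\in\Lambda^n T^*M$ as a derivation, the difference of the two induced connections on the line bundle $\Lambda^n T^*M$ is governed by the trace of the difference endomorphism: for each vector field $U$,
\[
  \nabla^{\alpha}_U \omega = \nabla_U \omega - \mathrm{tr}\bigl(V \mapsto A(U,V)\bigr)\,\omega .
\]
So the first genuine step is this trace computation, and it is also the step I expect to be the main obstacle, since the exact constant and sign are what drive the whole statement.

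Carrying out the trace, the map $V \mapsto -\alpha(U)V$ contributes $-n\,\alpha(U)$, while the rank-one map $V \mapsto -\alpha(V)U$ contributes $-\alpha(U)$, for a total of $\mathrm{tr}\bigl(V\mapsto A(U,V)\bigr) = -(n+1)\alpha(U)$. The extra $+1$ beyond the naive $n$ is precisely the feature that produces the exponent $\tfrac{n+1}{n-1}$ later. Hence
\[
  \nabla^{\alpha}_U \omega = \nabla_U \omega + (n+1)\alpha(U)\,\omega .
\]
Because $M$ is orientable, $\Lambda^n T^*M$ is trivialized by the $\nabla$-parallel form $\mathrm{dvol}_g$, so every volume form is $\omega = h\,\mathrm{dvol}_g$ for a nowhere-vanishing function $h$, and $\nabla_U\omega = (U\ln|h|)\,\omega$. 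Substituting gives
\[
  \nabla^{\alpha}_U \omega = \bigl(d\ln|h| + (n+1)\alpha\bigr)(U)\,\omega ,
\]
so $\omega$ is parallel if and only if $d\ln|h| = -(n+1)\alpha$.

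The equivalence with closedness then falls out of this last equation. If a parallel $\omega=h\,\mathrm{dvol}_g$ exists, then $\alpha = -\tfrac{1}{n+1}\,d\ln|h|$ is exact, hence closed; conversely, if $\alpha$ is closed, then $-(n+1)\alpha$ is closed and, by the Poincar\'e lemma, locally equals $d\ln|h|$ for some positive $h$, producing a parallel volume form near each point. Equivalently, one may phrase the computation as saying that the curvature of the induced connection on the determinant line bundle (whose connection form relative to $\mathrm{dvol}_g$ is $(n+1)\alpha$) equals $(n+1)\,d\alpha$, which vanishes exactly when $\alpha$ is closed. The one delicate point I would flag explicitly is local versus global existence: a single-valued $h$ defined on all of $M$ exists precisely when $\alpha$ is exact, so the globally defined parallel form is guaranteed under the hypotheses of the ``moreover'' clause.

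For the explicit formula I would simply solve the equation with $\alpha = \frac{df}{n-1}$: the condition $d\ln h = -(n+1)\alpha = -\frac{n+1}{n-1}\,df$ integrates (up to an irrelevant multiplicative constant) to $\ln h = -\frac{n+1}{n-1}f$, giving $\omega = e^{-\frac{n+1}{n-1} f}\,\mathrm{dvol}_g$; substituting this $h$ back into the displayed formula for $\nabla^{\alpha}_U\omega$ verifies in one line that $\nabla^{\alpha}\omega = 0$. Once the trace computation and its sign are pinned down, every remaining step is formal.
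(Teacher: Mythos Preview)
Your proof is correct and follows essentially the same route as the paper: both compute how $\nabla^{\alpha}$ acts on a top form and isolate the factor $(n+1)\alpha(U)$, then read off the condition for parallelism. The paper (in Proposition~\ref{Prop:Universal}) does this by expanding $(\nabla^{\alpha}_X e^{\psi}\mathrm{dvol}_g)(Y_1,\dots,Y_n)$ term by term, while you package the same calculation as the trace of the difference endomorphism $V\mapsto A(U,V)$; the content is identical. Your explicit remark about local versus global existence (closed $\alpha$ gives a parallel form locally via Poincar\'e, global existence needs exactness) is a point the paper leaves implicit.
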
 

For $A \subset M$, define $\mu(A) = \int_A e^{-\frac{n+1}{n-1} f } dvol_g$.  Proposition~\ref{Prop:ParVolForm}  indicates that we should consider the measure $\mu$ instead of  $\mathrm{Vol}_f$.  In fact, we will see below that the same local estimates can be used to either give bounds on $\mathrm{Vol}_f$ or $\mu$.  The measure $\mu$ arises if we change coordinates using the parametrized distance $s$ instead of the Riemannian distance function.  As such,  our volume comparison for $\mu$ will be in terms of the level sets of the  re-parametrized distance $s$ instead of the metric balls. See Theorem~\ref{VolComp} for the precise statement of this sharp relative volume comparison.

As an application of the volume comparison theorem, we obtain the following absolute volume comparison in the case $K>0$. 

\begin{theorem}  \label{ThmFiniteVolume}
Suppose that $(M^n,g,f)$ is a complete Riemannian manifold with $n>1$ supporting a function $f$ such that $\mathrm{Ric}_f^1\geq (n-1)Ke^{\frac{-4f}{n-1}}g$ for some $K>0$ then $\mu(M)<\infty$ and $\pi_1(M)$ is finite.
\end{theorem}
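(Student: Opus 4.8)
The plan is to combine the Weighted Myers' Theorem with the relative volume comparison for the measure $\mu$, and then to run the classical Myers covering-space argument with $\mu$ in place of the Riemannian volume. First I would establish $\mu(M)<\infty$. Fix a basepoint $p\in M$. By Theorem~\ref{Myers}, the hypothesis $\mathrm{Ric}_f^1\geq (n-1)Ke^{\frac{-4f}{n-1}}g$ with $K>0$ forces $s(p,q)\leq \frac{\pi}{\sqrt{K}}$ for every $q\in M$, so the entire manifold is contained in the $s$-ball of radius $\frac{\pi}{\sqrt{K}}$ about $p$. Recall that this curvature bound is precisely the statement that $\mathrm{Ric}^{\nabla^{\alpha}}(\frac{d\widetilde\gamma}{ds},\frac{d\widetilde\gamma}{ds})\geq (n-1)K$ along normalized $\alpha$-geodesics, so the comparison model in the $s$-parameter is the round sphere $S^n_K$ of curvature $K$, whose volume density is a constant multiple of $\sin^{n-1}(\sqrt{K}\,s)$. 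Applying the relative volume comparison Theorem~\ref{VolComp}, which estimates $\mu$ of the super-level sets of $s_p$ against this model, and integrating the model density out to the maximal radius $\frac{\pi}{\sqrt{K}}$ yields the finite value $\mathrm{Vol}(S^n_K)$. Hence $\mu(M)\leq C<\infty$, where the constant $C$ absorbs the density factor contributed by $f(p)$ at the center.

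For the finiteness of $\pi_1(M)$ I would pass to the universal cover $\pi\colon(\widetilde M,\widetilde g)\to (M,g)$ and set $\widetilde f=f\circ\pi$. Since the curvature hypothesis is local and $\widetilde f$ is the pullback of $f$, the inequality $\mathrm{Ric}_{\widetilde f}^1\geq (n-1)Ke^{\frac{-4\widetilde f}{n-1}}\widetilde g$ holds on $\widetilde M$, so the first part applied to the cover gives $\mu(\widetilde M)<\infty$. The deck group $\Gamma\cong\pi_1(M)$ acts freely and properly discontinuously by $\widetilde g$-isometries, and since $\widetilde f=f\circ\pi$ satisfies $\widetilde f\circ\gamma=\widetilde f$ for each deck transformation $\gamma$, every $\gamma$ preserves both $dvol_{\widetilde g}$ and the weight $e^{-\frac{n+1}{n-1}\widetilde f}$, hence preserves $\mu$. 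Choosing a fundamental domain $F$, the projection $\pi$ restricts to a $\mu$-measure-preserving bijection from $F$ onto $M$ up to a set of measure zero, so $\mu(\widetilde M)=|\Gamma|\cdot\mu(M)$. Because $f$ is smooth and finite, the density $e^{-\frac{n+1}{n-1}f}$ is strictly positive and thus $\mu(M)>0$; combined with $\mu(\widetilde M)<\infty$ this forces $|\Gamma|<\infty$, i.e. $\pi_1(M)$ is finite.

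The main obstacle is the first step: making precise that Theorem~\ref{VolComp}, which is a \emph{relative} comparison phrased through the level sets of $s$, genuinely produces a finite absolute bound on $\mu(M)$. One must verify that the integrable behavior of the model density $\sin^{n-1}(\sqrt{K}\,s)$ near the conjugate radius $\frac{\pi}{\sqrt{K}}$ survives in the weighted setting and that the density normalization at the center contributes only a finite constant. Once the comparison is set up in the $s$-coordinate, however, this reduces to the elementary integrability of $\sin^{n-1}$ on $[0,\frac{\pi}{\sqrt{K}}]$, exactly as in Cheng's classical volume estimate.
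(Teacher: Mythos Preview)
Your proposal is correct and follows essentially the same route as the paper: from Theorem~\ref{Myers} one has $M=C(p,\pi/\sqrt{K})$, and then the absolute volume comparison (the Corollary to Theorem~\ref{VolComp}(2)) gives $\mu(M)\leq e^{-\frac{n+1}{n-1}f(p)}v(n,K,\pi/\sqrt{K})<\infty$; the $\pi_1$ argument via the universal cover and deck-invariance of $\mu$ is identical. Your stated ``obstacle'' is already handled by that Corollary, which upgrades the relative comparison to an absolute bound by taking $s_0\to 0$ and using the asymptotics of $\mathcal{A}_f$ near the center, so no additional work is needed there.
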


\begin{remark}
Theorems~\ref{Myers} and~\ref{ThmFiniteVolume} are not true for the curvature bound $\mathrm{Ric}_f^1 \geq Kg$, $K>0$.  In fact, for any Riemannian manifold $N$, $\mathbb{R} \times N$ admits a metric with density such that $\mathrm{Ric}_f^1 > g$ (see Example~\ref{Ex:K>0}).
\end{remark}

\subsection{Weighted Holonomy}

The connection $\nabla^{\alpha}$ also introduces a new concept of parallelism for manifolds with measure.  Recall that the holonomy group of a manifold equipped with a connection is the group of  linear maps of the tangent space given by parallel translation around loops with a fixed base point.  While Levi-Civita connections are characterized as the torsion free connections which have holonomy contained in $O(n)$,  on an orientable  manifold with closed 1-form $\alpha$, the holonomy of the connection $\nabla^{\alpha}$ is only contained, in general, in $SL_n(\mathbb{R})$ (see Proposition~\ref{Prop:SLn}).  This is natural if we consider the connection $\nabla^{\alpha}$ as a structure for a measure instead of a metric. While the  holonomy groups of $\nabla^{\alpha}$ are more general than the Riemannian ones, we show they also exhibit some similar rigidity phenomena. 

Recall the de Rham decomposition theorem which states that a Levi-Civita connection admits a parallel field if and only if the metric locally splits off a flat factor and, more generally, that the holonomy is reducible if and only if the metric is locally a product.  We give examples showing these results are not true for the connection $\nabla^{\alpha}$.  However,  the holonomy of $\nabla^{\alpha}$ does exhibit similar rigidity phenomena.  

The spaces in our rigidity results will be warped or twisted products instead of direct products.  Here by a \emph{twisted product} we mean a Riemannian manifold $(M, g_M)$ which is a topological product $M = B \times F$ with metric of the form $g_M= g_B + e^{2\psi}g_F$ where $g_B$ and $g_F$ are fixed metrics on $B$ and $F$ respectively and $\psi$ is an arbitrary positive function on $B\times F$. $(M,g_M)$ is a \emph{warped product} if, in addition, $\psi$ is a function depending on $B$ only.   First we state the result for parallel fields. 

\begin{theorem}\label{Thm:Parallel}
Let $\alpha = d\varphi$ be a closed $1$-form. If $(M,g)$  is complete and simply connected and admits $k$ linearly independent $\nabla^\alpha$-parallel vector fields, then $M$ splits as one of the following:
\begin{align*}
  M&= \mathbb{R}^k \times N & g_M &= g_{Eucl} + e^{2\varphi}g_N\\
  M&= \mathbb{H}^k \times N & g_M &= g_{Hyperb} + e^{2\varphi}g_N.
\end{align*}
In both cases, $\varphi$ splits as $\varphi_{\mathbb{R}^k} + \varphi_N$ (or $\varphi_{\mathbb{H}^k} + \varphi_N$), so $(M,g_M)$ can also be thought of as a warped product.
\end{theorem}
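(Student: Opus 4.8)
The plan is to reduce the statement to the classical theory of closed conformal (concircular) vector fields by rescaling the given $\nabla^\alpha$-parallel fields, and then to use the positivity of the density to force the distinguished factor to be a space form of non-positive curvature. First I would record the hypothesis pointwise: writing $\alpha = d\varphi$, the condition $\nabla^{\alpha}_U X_i = 0$ is equivalent to
\[
  \nabla_U X_i = d\varphi(U)\,X_i + d\varphi(X_i)\,U \qquad \text{for all } U.
\]
The key observation is that the rescaled fields $Y_i := e^{-\varphi}X_i$ satisfy $\nabla_U Y_i = \bigl(e^{-\varphi}d\varphi(X_i)\bigr)U =: \lambda_i\,U$; that is, each $Y_i$ is a closed conformal vector field with factor $\lambda_i = Y_i(\varphi)$. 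Since $M$ is simply connected, $Y_i^{\flat}$ is closed, so $Y_i = \operatorname{grad}\rho_i$ with $\Hess\rho_i = \lambda_i g$, and a one-line computation gives the curvature identity $R(U,V)Y_i = d\lambda_i(U)\,V - d\lambda_i(V)\,U$, which drives the rest of the argument.

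Next I would extract the two distributions. The span $\mathcal V = \operatorname{span}(X_1,\dots,X_k) = \operatorname{span}(Y_1,\dots,Y_k)$ is $\nabla^{\alpha}$-parallel, the $X_i$ commute, and $\mathcal V$ is autoparallel for $\nabla$ (its leaves are totally geodesic), while $\mathcal H := \mathcal V^{\perp}$ is integrable with totally umbilic leaves whose mean curvature vector is $-(\operatorname{grad}\varphi)^{\mathcal V}$; these are all short computations with the displayed formula. To upgrade \emph{umbilic} to \emph{spherical} I would feed $W \in \mathcal H$ into the curvature identity: comparing $g(R(W,Y_j)Y_i,Y_l)$ with $-g(R(W,Y_j)Y_l,Y_i)$ yields $d\lambda_i(W)\,g(Y_j,Y_l) = -d\lambda_l(W)\,g(Y_j,Y_i)$, and taking $i=l=j$ forces $d\lambda_i(W)\,|Y_i|^2 = 0$, so $d\lambda_i|_{\mathcal H}=0$. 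Hence the $\lambda_i$ are basic and $\mathcal H$ is spherical. By Hiepko's theorem (or the weighted de Rham splitting established earlier in Section~\ref{Sec:Hol}), together with completeness and simple connectivity, $M$ splits globally as a warped product $M = B \times N$ with $\mathcal V = TB$ the totally geodesic base and $\mathcal H = TN$; matching the umbilicity vector $-(\operatorname{grad}\varphi)^{\mathcal V}$ with $-\operatorname{grad}^{B}(\ln\psi)$ shows the warping equals $e^{\varphi}$ up to a fiber factor, giving $g_M = g_B + e^{2\varphi}g_N$ and the additive splitting $\varphi = \varphi_B + \varphi_N$.

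It remains to identify $(B,g_B)$. Since $B$ is totally geodesic and the $X_i$ form a global $\nabla^{\alpha}|_B$-parallel frame, $\nabla^{\alpha}|_B$ is flat and is projectively equivalent to the Levi-Civita connection of $g_B$, so by Beltrami's theorem $g_B$ has constant curvature $\kappa$. Equivalently, restricting $R(X_i,X_j)X_l = Q(X_i,X_l)X_j - Q(X_j,X_l)X_i$, with $Q = \Hess\varphi + d\varphi\otimes d\varphi$, to a leaf gives the Obata equation
\[
  \Hess^{B}\!\bigl(e^{\varphi}\bigr) = -\kappa\, e^{\varphi}\, g_B .
\]
Here the sign is pinned down: if $\kappa>0$ then the positive function $v = e^{\varphi}$ would have to be a sign-changing first eigenfunction of the round sphere, a contradiction, so $\kappa \le 0$. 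As $B$ is complete and simply connected (being a factor of the simply connected $M$), it is $\R^k$ when $\kappa=0$ and $\mathbb{H}^k$ when $\kappa<0$, the claimed dichotomy.

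I expect the main obstacle to be the globalization in the third paragraph — passing from the local warped-product structure to a genuine global product with a \emph{complete} space-form base — together with the exclusion of a spherical factor. The latter is exactly what the positivity of $e^{\varphi}$ in the Obata equation rules out, consistent with the fact that the fields $Y_i = e^{-\varphi}X_i$ are nowhere zero and hence have no poles, whereas a spherical factor would arise precisely from a zero of a closed conformal field.
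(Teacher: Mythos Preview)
Your proposal is correct and shares the paper's key opening move---rescaling each parallel field to $Y_i=e^{-\varphi}X_i$ to obtain a nowhere-vanishing closed conformal (concircular) gradient field---but from there the two arguments diverge.

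The paper (Proposition~\ref{Prop:f-ParallelField} plus Theorem~\ref{TopParFields}) proceeds by \emph{induction on $k$}: one concircular field $Y_1$ invokes the global Brinkmann--Tashiro theorem to split off a single $\mathbb{R}$-factor, the remaining fields are shown to project to $\nabla^{\alpha_N}$-parallel fields on the cross-section $N$, and the induction continues. Constant curvature of the $k$-dimensional base is then obtained by two separate arguments: Schur's lemma for $k\ge 3$ (using $R^{\nabla^\alpha}=0$ on the span), and an explicit ODE computation for $k=2$. The sphere is never explicitly excluded---it simply never appears, because each inductive step produces an $\mathbb{R}$-factor (the concircular potential has no critical points).

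Your route is more structural: treat the span $\mathcal V$ and its complement $\mathcal H$ simultaneously, show $\mathcal V$ totally geodesic and $\mathcal H$ spherical via the curvature identity for concircular fields, and then split as a warped product in one stroke. Your use of Beltrami's theorem handles all $k\ge 2$ uniformly (and is in fact what the paper alludes to in the last paragraph of Section~\ref{Sec:Hol}), and your Obata-equation argument for excluding $\kappa>0$ is cleaner than leaving it implicit. The trade-off is exactly the one you flag: the globalization. Hiepko's theorem is local, and the weighted de~Rham theorem in Section~\ref{Sec:Hol} (Theorem~\ref{Thm:GlobaldeRham}) requires completeness of the auxiliary metric $k=g|_{\mathcal V}+e^{-2\varphi}g|_{\mathcal H}$, which you have not verified. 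The paper's inductive scheme sidesteps this entirely because Tashiro's theorem is already global for complete $(M,g)$, and each $\mathbb{R}$-factor comes with a complete cross-section. If you want to keep your distribution-theoretic argument, the cleanest fix is to run your curvature and sphericity computations as you do, but then feed the single field $Y_1$ into Tashiro to get the first global $\mathbb{R}$-splitting and iterate---at which point you have essentially reproduced the paper's induction with a tidier endgame.
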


In the more general case of reducible holonomy groups we obtain a twisted product splitting. 

\begin{theorem}[Weighted de Rham decomposition theorem]\label{Thm:deRham}
A Riemannian metric  $(M,g_M)$ admits a closed one-form $\alpha=d\varphi$ such that the $\nabla^\alpha$ holonomy is reducible if and only if $(M,g_M)$ is locally isometric to a twisted product  with $g_M = g_B + e^{2\varphi} g_F$.  Moreover, if a compact manifold admits a closed one-form $\alpha$ such that the $\nabla^\alpha$ holonomy is reducible, then the universal cover is diffeomorphic to $B \times F$ and the covering metric is isometric to $g_M = g_B + e^{2\varphi} g_F$. 
\end{theorem}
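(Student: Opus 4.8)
The plan is to treat the two implications separately, in each case reading off the twisted product structure from a $\nabla^{\alpha}$-parallel distribution and its $g$-orthogonal complement. For the easy direction, suppose $(M,g_M)$ is a twisted product $g_M = g_B + e^{2\varphi}g_F$ with $\alpha = d\varphi$, and write $\mathcal{H}=TB$, $\mathcal{V}=TF$ for the distributions tangent to the two factors. A Koszul-formula computation of the Levi-Civita connection gives $\nabla_X V = (X\varphi)V$ for $X\in\mathcal{H}$, $V\in\mathcal{V}$, and $\nabla_X Y\in\mathcal{H}$ for $X,Y\in\mathcal{H}$. Substituting into $\nabla^{\alpha}_A B = \nabla_A B - \alpha(A)B-\alpha(B)A$ one checks directly that $\nabla^{\alpha}_X Y$ and $\nabla^{\alpha}_V Y$ both lie in $\mathcal{H}$; hence $\mathcal{H}$ is $\nabla^{\alpha}$-parallel, parallel transport preserves $\mathcal{H}_p$, and the holonomy is reducible.

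For the converse, reducibility produces a proper nonzero subspace invariant under parallel transport, which extends (locally, and globally on a simply connected cover) to a $\nabla^{\alpha}$-parallel distribution $\mathcal{H}$; since $\nabla^{\alpha}$ is torsion free, $\mathcal{H}$ is integrable. The key step is to rewrite the parallelism of $\mathcal{H}$ in terms of the Levi-Civita connection: projecting $\nabla^{\alpha}_A Y = \nabla_A Y - \alpha(A)Y - \alpha(Y)A$ onto $\mathcal{V}=\mathcal{H}^{\perp}$ for $Y\in\mathcal{H}$ shows that $\mathcal{H}$ is $\nabla^{\alpha}$-parallel if and only if (A) $(\nabla_X Y)^{\mathcal{V}}=0$ for $X,Y\in\mathcal{H}$, i.e. the leaves of $\mathcal{H}$ are totally geodesic, and (B) $(\nabla_V Y)^{\mathcal{V}}=\alpha(Y)V$ for $V\in\mathcal{V}$. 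From (B) a short computation gives $(\nabla_V W)^{\mathcal{H}} = -g(V,W)\,(\nabla\varphi)^{\mathcal{H}}$ for $V,W\in\mathcal{V}$, where $(\nabla\varphi)^{\mathcal{H}}$ is the $\mathcal{H}$-component of the $g$-gradient of $\varphi$; this expression is symmetric in $V,W$, so $\mathcal{V}$ is also integrable and in fact totally umbilic. With both distributions integrable and $g$-orthogonal, I pass to adapted product coordinates, and then (A) and (B) yield $V\,g(X,Y)=0$ and $X\,g(V,W)=2(X\varphi)g(V,W)$. The first says the $\mathcal{H}$-metric is independent of the $\mathcal{V}$-coordinates, and the second says $e^{-2\varphi}g|_{\mathcal{V}}$ is independent of the $\mathcal{H}$-coordinates; together these are exactly the local assertion $g_M = g_B + e^{2\varphi}g_F$.

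For the global statement, I would lift to the universal cover $\widetilde{M}$, which is complete (since $M$ is compact) and simply connected; there $\alpha$ is exact, $\varphi$ is a globally defined function, and because the invariant subspace is preserved by the restricted holonomy it extends to a globally defined $\nabla^{\alpha}$-parallel distribution $\mathcal{H}$. The local analysis above then equips $\widetilde{M}$ with complementary foliations whose $\mathcal{H}$-leaves are totally geodesic and whose $\mathcal{V}$-leaves are totally umbilic, and invoking the global decomposition theorem for such a pair of foliations on a complete simply connected manifold gives a diffeomorphism $\widetilde{M}\cong B\times F$ under which the covering metric is $g_B + e^{2\varphi}g_F$.

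I expect the main obstacle to be precisely this last, global, upgrade. The local equivalence is essentially forced by conditions (A) and (B) and is a routine computation; by contrast, globalizing the twisted product requires completeness of the totally geodesic foliation $\mathcal{H}$ (supplied by compactness of $M$) together with simple connectivity of $\widetilde{M}$ to control the holonomy of the two foliations, and it is in verifying the hypotheses of the global foliation-decomposition result, rather than in the differential identities, that the real work lies.
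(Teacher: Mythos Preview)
Your local argument is correct but proceeds by a genuinely different route from the paper's. You extract conditions (A) and (B) directly from the defining formula for $\nabla^\alpha$, deduce that $\mathcal{H}$ is totally geodesic and $\mathcal{V}$ totally umbilic, and then integrate the equations $V\,g(X,Y)=0$ and $X\,g(V,W)=2(X\varphi)g(V,W)$ in adapted product coordinates to read off the twisted form of $g$. The paper instead introduces an auxiliary Riemannian metric
\[
k(X_1,X_2)=g(Y_1,Y_2)+e^{-2\varphi}g(Z_1,Z_2),\qquad Y_i\in\nu,\ Z_i\in\nu^\perp,
\]
and checks via the Koszul formula that the $\nabla^\alpha$-parallel distribution $\nu$ is parallel for the \emph{Levi-Civita connection of $k$}. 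The classical Riemannian de Rham theorem applied to $k$ then gives a local product $k=g_B+g_F$, whence $g=g_B+e^{2\varphi}g_F$. Your route is more elementary in that it avoids quoting de Rham; the paper's route trades the computation for a known black box.

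The real divergence is in the global step, and here the paper's packaging pays off. Since $M$ is compact, $\varphi$ is bounded above, so $k\ge c\,g$ for some $c>0$; hence the lift of $k$ to the simply connected universal cover is complete, and the \emph{global} Riemannian de Rham theorem immediately produces the diffeomorphism $\widetilde M\cong B\times F$ and the product splitting of $k$, hence of $g$. Your proposal instead appeals to an unnamed ``global decomposition theorem for such a pair of foliations''; results of this flavour (e.g.\ Ponge--Reckziegel) do exist, but each carries its own completeness-type hypotheses on the leaves that you would still have to verify. You correctly flag this as the delicate point---and the paper's device of recasting the problem as a Riemannian holonomy reduction for $k$ is precisely what sidesteps it.
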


\begin{remark}
We also prove a global version of the twisted product splitting in Theorem~\ref{Thm:deRham} for complete simply connected noncompact metrics $(M,g)$ satisfying an additional technical assumption (see Theorem~\ref{Thm:GlobaldeRham}). We do not know if the extra technical assumption is optimal. 
\end{remark}

\begin{remark} 
A consequence of the Riemannian de Rham theorem is that if the Riemannian holonomy is reducible then the holonomy group decomposes as a product.  This is not true for the holonomy of $\nabla^{\alpha}$ as the splitting of the group will only be block upper triangular in general for a twisted product  (see Example~\ref{Ex:Warped}).
\end{remark} 

Comparison geometry results for the weighted Ricci and sectional curvatures like the ones above prove that many topological obstructions to Riemannian metrics with curvature bounds extend to the weighted curvatures. An open question is whether the topologies that support positive curvature are the same. Namely, given a triple $(M,g,f)$ with positive weighted Ricci or sectional curvatures it is an open question whether there is always some other metric on $M$ with positive Ricci or sectional curvature. See \cites{WylieSplitting, KennardWylie} for further discussion.  

The holonomy of the connection $\nabla^\alpha$ is also related to this question by the following result. 

\begin{theorem}\label{Thm:CptCurv}
Suppose that $(M,g)$ supports a 1-form $\alpha$ such that the holonomy group of $\nabla^{\alpha}$ is compact. Then, 
\begin{enumerate}
\item If $g(R^{\nabla^{\alpha}} (X, Y)Y, X)>0$ for all orthonormal pairs $X$, $Y$, then there is a metric $\widetilde{g}$ on $M$ with $\mathrm{sec}>0$.

\item If $\mathrm{Ric}^{\nabla^{\alpha}} > 0$ then there is a metric $\widetilde{g}$ on $M$ with $\mathrm{Ric}_{\tilde{g}} > 0$. 
\end{enumerate}

Both parts also hold for non-negative, negative and non-positive curvature.
\end{theorem}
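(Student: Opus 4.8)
The plan is to exploit the compactness of the holonomy group to produce an honest Riemannian metric whose Levi-Civita connection is $\nabla^\alpha$, and then to read off both curvature conclusions from the resulting identification of curvature tensors. First I would fix a base point $p$ and average an arbitrary inner product $q_0$ on $T_pM$ over the compact group $\mathrm{Hol}_p(\nabla^\alpha)$ with respect to Haar measure, setting $\widetilde g_p(u,v)=\int_{\mathrm{Hol}_p}q_0(hu,hv)\,dh$. This inner product is holonomy-invariant, so it is unchanged by parallel transport around loops; transporting it along paths out of $p$ therefore yields a well-defined global metric $\widetilde g$ that is parallel for $\nabla^\alpha$. (Connectedness of $M$ makes every point reachable, and the holonomy-invariance is exactly what removes the path-dependence.)

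Since $\nabla^\alpha$ is torsion-free and $\widetilde g$ is $\nabla^\alpha$-parallel, the uniqueness half of the fundamental theorem of Riemannian geometry identifies $\nabla^\alpha$ as the Levi-Civita connection of $\widetilde g$. Hence the connection curvatures are the Riemannian ones, $R^{\nabla^\alpha}=R^{\widetilde g}$ and $\mathrm{Ric}^{\nabla^\alpha}=\mathrm{Ric}_{\widetilde g}$. Part (2) is now immediate: positivity of the symmetric $(0,2)$-tensor $\mathrm{Ric}^{\nabla^\alpha}$ is literally positivity of $\mathrm{Ric}_{\widetilde g}$, so $\widetilde g$ is the metric sought, and the non-negative, negative, and non-positive variants follow from the same identity with the inequality changed.

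For part (1) the sign of the sectional curvature of $\widetilde g$ on the plane spanned by independent $X,Y$ is the sign of $\widetilde g(R^{\nabla^\alpha}(X,Y)Y,X)$, so I must show this is positive for every such pair, whereas the hypothesis supplies positivity of $g(R^{\nabla^\alpha}(X,Y)Y,X)$. The hard part will be passing from the $g$-contraction to the $\widetilde g$-contraction. A short computation shows that a parallel metric conformal to $g$ forces $\alpha=0$, so $\widetilde g$ is in general not a pointwise rescaling of $g$, and for an unrelated tensor the vector $R^{\nabla^\alpha}(X,Y)Y$ could pair with $X$ with opposite signs under the two metrics; thus the implication cannot be purely algebraic and must use the specific origin of the data. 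I would exploit that $g$, $\alpha$, and $\widetilde g$ are linked---$R^{\nabla^\alpha}$ is determined by $\nabla$ and $\alpha$, while $\widetilde g$ solves $(\nabla_U\widetilde g)(V,W)=-2\alpha(U)\widetilde g(V,W)-\alpha(V)\widetilde g(U,W)-\alpha(W)\widetilde g(U,V)$---to show that the component of $R^{\nabla^\alpha}(X,Y)Y$ transverse to the $X,Y$-plane enters the two contractions compatibly, so that $g$-positivity forces $\widetilde g$-positivity. This linkage is the crux of part (1), and the sectional-curvature statements for the remaining three sign conventions then follow in the same way.
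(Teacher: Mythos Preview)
Your construction of $\widetilde g$ by averaging over $\mathrm{Hol}_p(\nabla^\alpha)$ and the identification of $\nabla^\alpha$ as its Levi-Civita connection are exactly what the paper does (implicitly, via the fundamental principle in Proposition~\ref{Prop:FundPrinc1}), and your argument for part~(2) is complete and matches the paper's one-line deduction $\mathrm{Ric}_f^1=\mathrm{Ric}^\alpha=\mathrm{Ric}_{\widetilde g}$.

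The gap is in part~(1). You correctly isolate the crux---that positivity of $g(R^{\alpha}(X,Y)Y,X)$ on $g$-orthonormal pairs must force positivity of $\widetilde g(R^{\alpha}(X,Y)Y,X)$ on $\widetilde g$-orthonormal pairs---but your proposed route through the compatibility equation $(\nabla_U\widetilde g)(V,W)=-2\alpha(U)\widetilde g(V,W)-\alpha(V)\widetilde g(U,W)-\alpha(W)\widetilde g(U,V)$ is not carried out, and it is not clear that analysing ``transverse components'' in this way actually closes the argument. The paper's resolution is sharper and purely pointwise: fix $Y$ with $g(Y,Y)=1$ and observe that the endomorphism $R^{\alpha}(\cdot,Y)Y$ of $T_pM$ is \emph{$\widetilde g$-self-adjoint}, hence has a $\widetilde g$-orthonormal eigenbasis $X_1=Y,X_2,\dots,X_n$ with real eigenvalues $0=\lambda_1,\lambda_2,\dots,\lambda_n$. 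For each $k\ge 2$ one chooses $\widetilde X_k=aX_k+bY$ so that $g(\widetilde X_k,\widetilde X_k)=1$ and $g(\widetilde X_k,Y)=0$; since $R^{\alpha}(\widetilde X_k,Y)Y=a\lambda_kX_k=\lambda_k(\widetilde X_k-bY)$, a two-line computation gives
\[
g\bigl(R^{\alpha}(\widetilde X_k,Y)Y,\widetilde X_k\bigr)=\lambda_k.
\]
Thus every nonzero eigenvalue of $R^{\alpha}(\cdot,Y)Y$ is realised as a value of $g(R^{\alpha}(X,Y)Y,X)$ on some $g$-orthonormal pair, so the hypothesis forces all $\lambda_k>0$ and hence $\sec_{\widetilde g}(\cdot,Y)>0$. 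This eigenvalue trick is the missing idea in your proposal; once you have it, the other sign conventions follow immediately.
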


While we construct examples below showing that the holonomy group of $\nabla^{\alpha}$ need not be compact, Theorem~\ref{Thm:CptCurv} motivates the  continued study of the holonomy of $\nabla^{\alpha}$ in relation to  the study of weighted curvature bounds. For further discussion of the condition of compact $\nabla^{\alpha}$ holonomy, see subsection 5.4.


\section{Connection for Manifolds with measure }\label{Sec:Connection}

Let $(M,g)$ be a Riemannian manifold with smooth one form $\alpha$.  In this section we collect some basic facts about the weighted connection,  $\nabla^{\alpha}_X Y = \nabla_X Y - \alpha(Y)X - \alpha(X) Y$.  $\nabla^\alpha$ depends not only on $\alpha$ but on $g$ as well, however, since we will always think of the background metric $g$ as being fixed, we will not emphasize this dependency.   

It is easy to see that $\nabla^\alpha$ is a torsion free connection.  Any linear connection defines a notion of geodesics as the curves whose velocity fields are parallel along the curve. Two connections are called projectively equivalent if they have the same geodesics up to parametrization.  We call a curve an $\alpha$-geodesic if it is a geodesic for the connection $\nabla^\alpha$. We will refer to the usual geodesics for the Levi-Civita connection as the $g$-geodesics.  By a theorem of Weyl, $\nabla^{\alpha}$ is projectively equivalent to $\nabla$. For completeness we verify this fact for $\nabla^\alpha$ when $\alpha=d\varphi$ is closed,  and also fix the re-parametrization that we will utilize for comparison results.

\begin{proposition}
 If $\gamma:\R\to M$ is an $\alpha$-geodesic then the image of $\gamma$ is a $g$-geodesic, and the parametrization satisfies $|\dot{\gamma}(t)| = Ce^{2\varphi(\gamma(t))}$ for some constant $C$. 
 \end{proposition}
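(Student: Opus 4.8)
The plan is to read off the geodesic equation for $\nabla^{\alpha}$ and extract from it both an ODE for the speed $|\dot\gamma|$ and the fact that the underlying image curve is a $g$-geodesic after reparametrization. First I would write the $\alpha$-geodesic equation explicitly. Since $\nabla^{\alpha}_{\dot\gamma}\dot\gamma = \nabla_{\dot\gamma}\dot\gamma - 2\alpha(\dot\gamma)\dot\gamma$ and $\alpha = d\varphi$ gives $\alpha(\dot\gamma) = \frac{d}{dt}(\varphi\circ\gamma)$, the equation $\nabla^{\alpha}_{\dot\gamma}\dot\gamma = 0$ becomes
\[
  \nabla_{\dot\gamma}\dot\gamma = 2(\varphi\circ\gamma)'\,\dot\gamma.
\]
The key structural point is that the acceleration is pointwise proportional to the velocity, which is exactly the condition for a curve to be an unparametrized $g$-geodesic.

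Next I would obtain the speed formula. Differentiating $|\dot\gamma|^2 = g(\dot\gamma,\dot\gamma)$ and using metric compatibility of $\nabla$ together with the displayed equation gives $\frac{d}{dt}|\dot\gamma|^2 = 2g(\nabla_{\dot\gamma}\dot\gamma,\dot\gamma) = 4(\varphi\circ\gamma)'|\dot\gamma|^2$. This is a linear first-order ODE for $|\dot\gamma|^2$ whose solution is $|\dot\gamma|^2 = C^2 e^{4\varphi\circ\gamma}$ for a constant determined by the initial data; taking square roots yields $|\dot\gamma(t)| = C e^{2\varphi(\gamma(t))}$. For a non-constant geodesic the velocity is nonzero at one point and hence, by the formula, $C>0$ and the speed never vanishes.

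Finally I would verify that the image is a $g$-geodesic by reparametrizing by arclength. Setting $s(t)$ with $s'(t) = |\dot\gamma(t)| = C e^{2\varphi\circ\gamma}$ and letting $\sigma = \gamma\circ t(s)$ be the unit-speed reparametrization, the chain rule for the covariant derivative gives $\nabla_{\dot\gamma}\dot\gamma = s''\dot\sigma + (s')^2\nabla_{\dot\sigma}\dot\sigma$. Differentiating the speed formula shows $s'' = 2(\varphi\circ\gamma)'\,s'$, so that $s''\dot\sigma = 2(\varphi\circ\gamma)'\dot\gamma$, which is precisely the right-hand side of the geodesic equation. Comparing with $\nabla_{\dot\gamma}\dot\gamma = 2(\varphi\circ\gamma)'\dot\gamma$ forces $(s')^2\nabla_{\dot\sigma}\dot\sigma = 0$, and since $s'\neq 0$ we conclude $\nabla_{\dot\sigma}\dot\sigma = 0$; that is, $\sigma$ is a unit-speed $g$-geodesic tracing the image of $\gamma$. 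No step here is genuinely hard — everything follows from metric compatibility and the chain rule for covariant differentiation — so the main thing to be careful about is the bookkeeping in the reparametrization and ensuring that the speed never vanishes, so that dividing by $s'$ is legitimate.
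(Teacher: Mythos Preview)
Your proposal is correct and follows essentially the same approach as the paper: both derive the ODE for $|\dot\gamma|^2$ from the $\alpha$-geodesic equation and solve it, and both observe that $\nabla_{\dot\gamma}\dot\gamma$ is a scalar multiple of $\dot\gamma$. The only difference is that you spell out the arclength reparametrization explicitly to verify $\nabla_{\dot\sigma}\dot\sigma=0$, whereas the paper simply invokes the standard fact that $\nabla_{\dot\gamma}\dot\gamma$ parallel to $\dot\gamma$ characterizes reparametrized $g$-geodesics.
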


\begin{proof}

Let $\gamma$ be an $\alpha$-geodesic, then
\begin{align*}
  \frac{d}{dt}\langle\dot{\gamma}(t),\dot{\gamma}(t)\rangle &= 2 \langle \nabla_{\dot{\gamma}}\dot{\gamma}(t),\dot{\gamma}(t)\rangle\\
  &= 2\langle \nabla^{\alpha}_{\dot{\gamma}}\dot{\gamma}(t) + 2 d\varphi(\dot{\gamma}(t))\dot{\gamma}(t), \dot{\gamma}(t)\rangle\\
  &= 4 d\varphi(\dot{\gamma}(t)) \langle \dot{\gamma}(t),\dot{\gamma}(t)\rangle.
\end{align*}
We conclude that $2\frac{d}{dt} |\dot{\gamma}(t)| = 4 d\varphi(\dot{\gamma}(t)) |\dot{\gamma}(t)|$. Dividing by $|\dot{\gamma}(t)|$ we get a log derivative, which we solve to get $\log |\dot{\gamma}(t)| = 2 \varphi(\gamma(t)) + C$, so $|\dot{\gamma}(t)|e^{- 2 \varphi(\gamma(t))} = e^C$. The image of the geodesic is the same, since $\nabla^{\alpha}_{\dot{\gamma}} \dot{\gamma}(t) - \nabla_{\dot{\gamma}} \dot{\gamma}(t)$ is parallel to $\dot{\gamma}$.
\end{proof}

Recall that the Levi-Civita connection has the universal property that it is the unique torsion free connection which is compatible with the metric.  The next proposition shows that the weighted connection $\nabla^{\alpha}$ has  a similar universal property for a smooth manifolds equipped with a smooth measure $\mu$. 

\begin{proposition} \label{Prop:Universal} Given a Riemannian metric $(M,g)$ and a smooth measure $\mu$ there is a unique torsion free linear connection which is projectively equivalent to the Levi-Civita connection with respect to which $\mu$ is parallel. Moreover, if $\mu(A) = \int_A e^{\psi} dvol_g$, then the connection is $\nabla^{\alpha}$ where $\alpha = -\frac{1}{n+1} d \psi$. 
\end{proposition}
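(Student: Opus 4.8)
The plan is to combine Weyl's theorem with a direct computation of how $\nabla^{\alpha}$ acts on a volume form (or, on a non-orientable $M$, on a density). Since we seek a \emph{torsion-free} connection that is \emph{projectively equivalent} to the Levi-Civita connection, Weyl's theorem \cite{Weyl} guarantees that any candidate is of the form $\nabla^{\alpha}$ for some one-form $\alpha$, and the assignment $\alpha \mapsto \nabla^{\alpha}$ is injective (one recovers $\alpha$ from the difference tensor). The problem thus reduces to showing that there is exactly one $\alpha$ for which $\mu$ is $\nabla^{\alpha}$-parallel, and to identifying it. First I would record the difference tensor $A_X Y := \nabla^{\alpha}_X Y - \nabla_X Y = -\alpha(X)Y - \alpha(Y)X$ and compute the trace of the endomorphism $Y \mapsto A_X Y$: the term $-\alpha(X)Y$ contributes $-n\,\alpha(X)$ and the rank-one term $-\alpha(Y)X$ contributes $-\alpha(X)$, so $\mathrm{Tr}(A_X) = -(n+1)\alpha(X)$. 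The factor $n+1$ is exactly what produces the coefficient in the statement.

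Next I would invoke the standard fact that for a connection $\widetilde{\nabla} = \nabla + A$ the induced action on a top-degree form $\omega$ is $\widetilde{\nabla}_X \omega = \nabla_X \omega - \mathrm{Tr}(A_X)\,\omega$; this follows by feeding a local frame into $\omega$ and using antisymmetry, which isolates the diagonal of $A_X$. Writing $\mu = e^{\psi}\,dvol_g$ and using that $dvol_g$ is $\nabla$-parallel gives $\nabla_X \mu = X(\psi)\,\mu$, and hence
\[
  \nabla^{\alpha}_X \mu = \nabla_X \mu - \mathrm{Tr}(A_X)\,\mu = \bigl( X(\psi) + (n+1)\,\alpha(X) \bigr)\mu.
\]
Therefore $\mu$ is $\nabla^{\alpha}$-parallel if and only if $X(\psi) + (n+1)\alpha(X) = 0$ for every $X$, i.e.\ if and only if $\alpha = -\tfrac{1}{n+1}\,d\psi$.

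This single equation determines $\alpha$ uniquely, yielding the uniqueness claim, and it exhibits the connection explicitly as $\nabla^{\alpha}$ with $\alpha = -\tfrac{1}{n+1}\,d\psi$. For existence one only needs to observe that this $\alpha$ is exact, hence closed, so $\nabla^{\alpha}$ is indeed projectively equivalent to $\nabla$ and, by construction (consistently with Proposition~\ref{Prop:ParVolForm}), leaves $\mu$ parallel. The step I expect to require the most care is the action of $\nabla^{\alpha}$ on the measure itself: a smooth measure is a $1$-density rather than a top form, so I would phrase the trace identity intrinsically on the density bundle---where the $GL(n)$-weight is the same---rather than relying on a chosen orientation, so that the argument remains valid on an arbitrary $M$. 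Everything else is routine linear algebra together with careful bookkeeping of signs.
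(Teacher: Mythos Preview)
Your proposal is correct and follows essentially the same route as the paper: both invoke Weyl's theorem to reduce to connections of the form $\nabla^{\alpha}$ and then compute the action of $\nabla^{\alpha}$ on $e^{\psi}\,dvol_g$ to find that parallelism forces $\alpha=-\tfrac{1}{n+1}d\psi$. The only difference is packaging: the paper expands $(\nabla^{\alpha}_X e^{\psi} dvol_g)(Y_1,\dots,Y_n)$ directly, whereas you isolate the trace identity $\widetilde{\nabla}_X\omega=\nabla_X\omega-\mathrm{Tr}(A_X)\,\omega$ first and then plug in $\mathrm{Tr}(A_X)=-(n+1)\alpha(X)$; your remark about phrasing the computation on the density bundle to cover the non-orientable case is a small refinement the paper does not make.
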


\begin{proof}
We use Weyl's theorem that any projectively equivalent connection is of the form $\nabla^{\alpha}_X Y = \nabla_X Y - \alpha(X) Y- \alpha(Y) X$. We can also use the fact that the Riemannian volume form is parallel with respect to the Levi-Civita connection. Then, for linearly independent fields $Y_1, \dots Y_n$ we have 
\begin{align*}
  (\nabla^{\alpha}_X e^{\psi} dvol_g)(Y_1, Y_2, \dots, Y_n )&=D_X\left(e^{\psi} dvol_g(Y_1, Y_2, \dots, Y_n )\right) \\
  & \qquad - e^{\psi} \sum_{i=1}^n (dvol_g)(Y_1, \dots, \nabla^{\alpha}_X Y_i, \dots, Y_n)\\
  &=e^{\psi} (D_X(\psi) + n \alpha(X) ) dvol_g(Y_1, Y_2, \dots, Y_n ) \\
  & \qquad +e^{\psi} \sum_{i=1}^n (dvol_g)(Y_1, \dots, \alpha(Y_i)X, \dots, Y_n)\\
  &= e^{\psi} dvol_g(Y_1, Y_2, \dots, Y_n ) \left[ D_X(\psi) + (n+1) \alpha(X) \right].
\end{align*}
Therefore, $ e^{\psi} dvol_g$ is parallel if and only if $\alpha = -\frac{1}{n+1} d \psi$.
\end{proof}

Now we turn our attention to the curvature of $\nabla^\alpha$. 

\begin{proposition} \label{Prop:Curvature}
The curvature tensor of $\nabla^\alpha$, $\alpha=d\varphi$ is
\begin{align*}
  R^{\alpha}(X,Y)Z = R(X,Y)Z &+ \Hess(\varphi)(Y,Z) X - \Hess(\varphi)(X,Z) Y\\
  &+ d\varphi(Y)d\varphi(Z) X - d\varphi(X)d\varphi(Z) Y.
\end{align*}
In particular, 
\begin{enumerate}
\item $g(R^{\alpha}(X,Y)Y, X) = \sec(X,Y) + \Hess \varphi (Y,Y) + d\varphi(Y)^2$ whenever $X,Y$ are orthonormal.

\item $\mathrm{Ric}^\alpha(Y,Z) = tr\left[X\to R^{\alpha}(X,Y)Z\right] = \mathrm{Ric}(Y,Z) + (n-1) \Hess \varphi (Y,Z) + (n-1) d\varphi(Y)d\varphi(Z)$.
\end{enumerate}
\end{proposition}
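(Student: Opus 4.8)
The plan is to compute the curvature tensor $R^\alpha(X,Y)Z = \nabla^\alpha_X\nabla^\alpha_Y Z - \nabla^\alpha_Y\nabla^\alpha_X Z - \nabla^\alpha_{[X,Y]}Z$ directly from the definition $\nabla^\alpha_U V = \nabla_U V - d\varphi(U)V - d\varphi(V)U$, and then obtain the two numbered corollaries by restriction and by tracing. First I would expand $\nabla^\alpha_Y Z = \nabla_Y Z - d\varphi(Y)Z - d\varphi(Z)Y$ and apply $\nabla^\alpha_X$ once more. This produces three kinds of terms: a Riemannian second derivative $\nabla_X\nabla_Y Z$, derivatives of the scalar coefficients $d\varphi(Y)$ and $d\varphi(Z)$, and purely algebraic terms quadratic in $d\varphi$. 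The single identity needed throughout is $X\bigl(d\varphi(Y)\bigr) = \Hess\varphi(X,Y) + d\varphi(\nabla_X Y)$, together with the symmetry of $\Hess\varphi$ (automatic since $\varphi$ is a function and $\nabla$ is torsion free).

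The bulk of the work is bookkeeping. After forming the antisymmetric combination $\nabla^\alpha_X\nabla^\alpha_Y Z - \nabla^\alpha_Y\nabla^\alpha_X Z$, I would collect terms according to the vector they multiply. The coefficients of $\nabla_X Z$ and of $\nabla_Y Z$ cancel outright; the coefficient of $Z$ collapses, via the Hessian identity and symmetry, to $-d\varphi([X,Y])$; the terms proportional to $\nabla_X Y$ and $\nabla_Y X$ combine to $-d\varphi(Z)[X,Y]$ using $\nabla_X Y - \nabla_Y X = [X,Y]$; and the coefficients of $X$ and of $Y$ reduce to $\Hess\varphi(Y,Z)+d\varphi(Y)d\varphi(Z)$ and $-\Hess\varphi(X,Z)-d\varphi(X)d\varphi(Z)$ respectively. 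Subtracting $\nabla^\alpha_{[X,Y]}Z = \nabla_{[X,Y]}Z - d\varphi([X,Y])Z - d\varphi(Z)[X,Y]$ then cancels exactly the $d\varphi([X,Y])Z$ and $d\varphi(Z)[X,Y]$ pieces and turns the three surviving second-derivative terms into $R(X,Y)Z$, giving the stated formula.

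For the corollaries I would specialize. Setting $Z=Y$ and pairing with $X$ for an orthonormal pair $\{X,Y\}$ annihilates the terms $-\Hess\varphi(X,Y)Y$ and $-d\varphi(X)d\varphi(Y)Y$, since both are proportional to $g(Y,X)=0$, and leaves $\sec(X,Y) + \Hess\varphi(Y,Y) + d\varphi(Y)^2$, which is part (1). For part (2) I would trace the endomorphism $X\mapsto R^\alpha(X,Y)Z$: the Riemannian part contributes $\Ric(Y,Z)$ by definition, while for the four extra terms I use that the trace of $X\mapsto c\,X$ is $nc$ and the trace of $X\mapsto \omega(X)\,v$ is $\omega(v)$ for a one-form $\omega$ and fixed $v$. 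The two Hessian terms then give $n\Hess\varphi(Y,Z)-\Hess\varphi(Y,Z)=(n-1)\Hess\varphi(Y,Z)$, and the two quadratic terms give $(n-1)d\varphi(Y)d\varphi(Z)$, yielding part (2).

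The only genuinely delicate point, and the main obstacle, is verifying that every non-tensorial (derivative-of-coefficient) term cancels, so that $R^\alpha$ is in fact a tensor of the advertised form. This cancellation hinges on two facts being used in concert: the symmetry of $\Hess\varphi$, which lets the $Z$-coefficient reduce to a bracket term, and torsion-freeness of $\nabla$, which assembles the $\nabla_X Y$-type terms into a bracket term, after which $\nabla^\alpha_{[X,Y]}Z$ supplies precisely the matching cancellations. The rest is routine algebra.
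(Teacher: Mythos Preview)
Your proposal is correct and follows essentially the same approach as the paper: both compute $R^\alpha$ by directly expanding $\nabla^\alpha_X\nabla^\alpha_Y Z$, $\nabla^\alpha_Y\nabla^\alpha_X Z$, and $\nabla^\alpha_{[X,Y]}Z$ and combining. Your write-up is in fact more explicit than the paper's about the cancellation mechanism (via the Hessian identity and torsion-freeness) and about how parts (1) and (2) follow, whereas the paper simply records the expansions and states that the conclusions follow.
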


\begin{proof}
We compute the curvature tensor:
\begin{align*}
  \nabla^{\alpha}_X \nabla^{\alpha}_Y Z &= \nabla_X \nabla_Y Z - d\varphi(Z) \nabla_X Y - d(d\varphi(Z))(X) Y - d\varphi(Y) \nabla_X Z\\
  &\qquad - d(d\varphi(Y))(X) Z - d\varphi(\nabla_Y Z) X + 2 d\varphi(Y)d\varphi(Z) X\\
  &\qquad - d\varphi(X) \nabla_Y Z + d\varphi(X) d\varphi(Z) Y + d\varphi(X) d\varphi(Y) Z,
\end{align*}
similarly:
\begin{align*}
  \nabla^{\alpha}_Y \nabla^{\alpha}_X Z &= \nabla_Y \nabla_X Z - d\varphi(Z) \nabla_Y X - d(d\varphi(Z))(Y) X - d\varphi(X) \nabla_Y Z\\
  &\qquad - d(d\varphi(X))(Y) Z - d\varphi(\nabla_X Z) Y + 2 d\varphi(X)d\varphi(Z) Y\\
  &\qquad - d\varphi(Y) \nabla_X Z + d\varphi(Y) d\varphi(Z) X + d\varphi(Y) d\varphi(X) Z,
\end{align*}
and
\[
  \nabla^{\alpha}_{[X,Y]} Z = \nabla_{[X,Y]} Z - d\varphi(Z) [X,Y] - d\varphi([X,Y]) Z.
\]
Finally, we get:
\begin{align*}
  R^{\alpha}(X,Y)Z &= \nabla^{\alpha}_X \nabla^{\alpha}_Y Z - \nabla^{\alpha}_Y \nabla^{\alpha}_X Z - \nabla^{\alpha}_{[X,Y]} Z\\
  &= R(X,Y)Z + \Hess(\varphi)(Y,Z) X - \Hess(\varphi)(X,Z) Y\\
  &\qquad + d\varphi(Y)d\varphi(Z) X - d\varphi(X)d\varphi(Z) Y.
\end{align*}

This also yields (1) and (2). 
\end{proof}

To make our results in the next section easier to compare to other results for Bakry-Emery Ricci tensors, we will use the function $f = (n-1) \varphi$ so that $\alpha = \frac{df}{n-1}$.  As mentioned in the introduction, this give the formula for the Ricci tensor,  $\mathrm{Ric}^\alpha = \mathrm{Ric}_f^1$.  It also gives us the measure  $\mu(A) = \int_A e^{-\frac{n+1}{n-1} f} dvol_g$. 

Recall that a manifold with a connection is called geodesically complete if every geodesic is defined for all time. We say that  $(M,g,f)$, with $\alpha = \frac{df}{n-1}$, is $\alpha$-complete if $\nabla^\alpha$ is a geodesically complete connection. The next proposition shows that $\alpha$-completeness implies  $f$-completeness as defined in \cites{WylieSplitting, WoolgarWylie}.  Here $s$  denotes the reparametrized distance function as defined in Section~\ref{Sec:ReparDist} 

\begin{proposition}\label{Prop:scomplete}
Let $(M,g)$ be a complete and $\alpha$-complete manifold, $\alpha$ closed, then $\lim d(p,q_i)\to\infty$ implies $\lim s(p,q_i)\to\infty$.
\end{proposition}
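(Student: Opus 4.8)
The plan is to argue by contradiction, after first reformulating $\alpha$-completeness into exactly the quantitative statement we need: along \emph{every} complete unit-speed $g$-geodesic, the total forward $s$-time is infinite. To see this, recall that (by the unlabeled proposition at the start of this section) any $\alpha$-geodesic has image a $g$-geodesic, and a unit-speed $g$-geodesic $\gamma$ reparametrizes to an $\alpha$-geodesic whose $s$-parameter is $s(r)=\int_0^r e^{-2f(\gamma(u))/(n-1)}\,du$. Since the image of every $\alpha$-geodesic is a $g$-geodesic, the maximal $\alpha$-geodesic through $p=\gamma(0)$ in the direction $\dot\gamma(0)$ has image exactly that of $\gamma$, so its maximal forward $s$-domain is $[0,\int_0^\infty e^{-2f(\gamma(r))/(n-1)}\,dr)$. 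Thus $\nabla^\alpha$ is geodesically complete iff $\int_0^\infty e^{-2f(\gamma(r))/(n-1)}\,dr=\infty$ for every complete unit-speed $g$-geodesic, from every point and in every direction.

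Next I would set up the contradiction. Suppose $d(p,q_i)\to\infty$ but $s(p,q_i)\not\to\infty$; passing to a subsequence I may assume $s(p,q_i)\le S_0<\infty$ while still $d(p,q_i)\to\infty$. Using $g$-completeness (Hopf--Rinow) and the definition of $s(p,q_i)$ as an infimum over normalized $\alpha$-geodesics, for each $i$ I can select a minimizing unit-speed $g$-geodesic $\gamma_i:[0,L_i]\to M$ with $L_i=d(p,q_i)$ whose associated normalized $\alpha$-geodesic has $s$-time $\int_0^{L_i}e^{-2f(\gamma_i(r))/(n-1)}\,dr\le S_0+1$. The initial velocities $\dot\gamma_i(0)$ lie on the unit sphere of $T_pM$, so after a further subsequence they converge to a unit vector $v$; let $\gamma$ be the complete (by $g$-completeness) unit-speed $g$-geodesic with $\gamma(0)=p$ and $\dot\gamma(0)=v$. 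Smooth dependence of the geodesic flow on initial data gives $\gamma_i(r)\to\gamma(r)$ for each fixed $r$.

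The crux is to transfer the $s$-time bound to the limit geodesic $\gamma$. The nonnegative functions $e^{-2f(\gamma_i(r))/(n-1)}\mathbf{1}_{[0,L_i]}(r)$ converge pointwise to $e^{-2f(\gamma(r))/(n-1)}$, since for fixed $r$ one has $L_i>r$ eventually and $\gamma_i(r)\to\gamma(r)$ with $f$ continuous. Fatou's lemma then yields
\[
  \int_0^\infty e^{-2f(\gamma(r))/(n-1)}\,dr\ \le\ \liminf_i \int_0^{L_i} e^{-2f(\gamma_i(r))/(n-1)}\,dr\ \le\ S_0+1<\infty,
\]
which contradicts the reformulation of $\alpha$-completeness from the first paragraph and finishes the argument.

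I expect the only genuine obstacle to be this limiting step: one must produce a limit geodesic whose forward $s$-time is controlled, which relies on compactness of the unit sphere in $T_pM$, continuous dependence of geodesics on initial conditions, and a lower-semicontinuity (Fatou) passage to transfer the integral bound. It is worth noting that the minimizing property of the $\gamma_i$ is needed only to (nearly) realize the infimum defining $s(p,q_i)$; the limit $\gamma$ need not be shown to be minimizing, only complete, which is automatic from $g$-completeness.
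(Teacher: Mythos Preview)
Your proof is correct and follows essentially the same approach as the paper's: argue by contradiction, extract a subsequence of minimizing $g$-geodesics whose initial vectors converge on the unit sphere of $T_pM$, pass to the limit geodesic $\gamma$, and show its total forward $s$-time is finite, contradicting $\alpha$-completeness. The only cosmetic difference is that the paper invokes uniform convergence of geodesics on compact intervals to pass the integral bound to $\gamma$, whereas you use Fatou's lemma; both work, and your version is arguably cleaner and more explicit about why the limiting step is valid.
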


\begin{proof}
Suppose that there exists a sequence $q_i$ such that $d(p,q_i)\to \infty$, but $s(p,q_i)$ does not go to infinity. By passing to a subsequence, we can assume that  $s(p,q_i) \leq N$ for some fixed $N$.  Let $\gamma_i:[0, T_i] \rightarrow M$ be unit speed $g$-geodesics with $\gamma_i(0)=p$ and $\gamma_i(T_i) = q_i$ such that $s(p, q_i) = \int_0^{T_i} e^{-2\varphi\left(\gamma(t)\right)} dt$.  $\dot{\gamma_i}(0)$ subconverges to a unit vector $v \in T_pM$.  Let $\gamma$ be the $g$-geodesic with $\gamma(0) = p$ and $\dot{\gamma}(0) = v$.    

By uniform convergence of geodesics, $s(p,\gamma(t)) \leq N$ for all $t>0$. Therefore, $\int_0^\infty e^{-2\varphi(\gamma(t))}dt \leq N$, so the $\alpha$-geodesic with image $\gamma$ is only defined up to some finite time.
\end{proof}


\section{Comparison Principles}\label{Sec:Comp}

\subsection{Volume element comparison}

Now we consider  comparison geometry results for $\nabla^\alpha$ on a Riemannian manifold $(M,g)$ of dimension $>1$.  In this section we will consider all of our formulas in terms of the function $f = (n-1) \varphi$.  
 
Let $p \in M$ and let $(r, \theta)$, $r>0$, $\theta \in S^n(1)$ be exponential polar coordinates (for the metric $g$) around $p$ which are defined on a maximal star shaped domain in $T_pM$ called the segment domain. Write the volume element in these coordinates as $dvol_g = \mathcal{A}(r, \theta) dr \wedge d\theta$. 
 
Let  $s_p(\cdot)$be the reparametrized distance function defined  in section 2.1 above.  Inside the segment domain,  $s_p$ has the simple formula 
\[
  s_p(r, \theta) = \int_0^r e^{\frac{-2f(t, \theta)}{n-1}} dt.
\]
Therefore, $s$ is a smooth function in the segment domain  with the property that  $\frac{\partial s}{\partial r} = e^{\frac{-2f}{n-1}}$.  We can then also take $(s, \theta)$ to be coordinates which are also valid for the entire segment domain.  We can not control the derivatives of $s$ in directions tangent to the geodesic sphere, so  the new $(s, \theta)$ coordinates are \emph{not} orthogonal as is the case for geodesic polar coordinates.    However, this is not an issue when computing volumes as 
\begin{align}
  e^{\frac{-(n+1)}{n-1} f} dvol_g &= e^{\frac{-(n+1)}{n-1} f} \mathcal{A} dr \wedge d\theta \nonumber \\
  &= e^{- f} \mathcal{A} ds \wedge d\theta. \label{VolEqn}
\end{align}
Given a minimal unit speed geodesic with $\gamma(0)=p$, the connection $\nabla^{\alpha}$  gives a natural re-parametrization of $\gamma$ in terms of the function $s$, we denote the derivative in the radial direction in terms of this parameter by $\frac{d}{ds}$.  In geodesic polar coordinates $\frac{d}{ds}$ has the expression  $\frac{d}{ds} = e^{\frac{2f}{n-1}} \frac{\partial}{\partial r}.$  We note it is not the same as $\frac{\partial}{\partial s}$ in $(s, \theta)$ coordinates. 

Recall that for a Riemannian manifold  $\frac{d}{dr} \log( \mathcal{A}) = \Delta r $, where $\Delta r$ is the Riemannian Laplacian of the distance function $r$ to the point $p$. \eqref{VolEqn} indicates we should consider the quantity 
\begin{equation}\label{VolElmS}
  \frac{d}{ds} \log \left( e^{-f} \mathcal{A}\right) = e^{\frac{2f}{n-1}} \left( \Delta r - g(\nabla f, \nabla r) \right).
\end{equation}

We thus recover the usual drift Laplacian  $\Delta_{f} u = \Delta u - g(\nabla f, \nabla u)$ considered by Bakry-\'Emery. Letting $\lambda = e^{\frac{2f}{n-1}} \Delta_f r$ we find  that $\lambda$ satisfies a familiar differential inequality in terms of the parameter $s$. 

\begin{lemma}\label{Thm:Bochner}
Let $\gamma(r)$ be a unit speed minimal geodesic with $\gamma(0) =p$.  Let $s$ be the parameter $ds = e^{\frac{-2f(\gamma(r))}{n-1}} dr$ and let $\lambda(r) = (e^{\frac{2f}{n-1}} \Delta_fr_p) (\gamma(r))$.  Then 
\[
  \frac{d \lambda}{ds} \leq  - \frac{\lambda^2}{n-1} - e^{\frac{4f}{n-1}} \mathrm{Ric}_{f}^1 \left( \frac{d \gamma}{dr}, \frac{d \gamma}{dr} \right).
\]
Moreover, if equality is achieved at a point then  at that point  $\Hess r$ has at most one non-zero eigenvalue which is of multiplicity $n-1$
\end{lemma}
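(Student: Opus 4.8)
The plan is to derive the weighted inequality directly from the classical trace Riccati (Bochner) inequality for the Riemannian distance function, tracking how the reparametrization $ds = e^{-2f/(n-1)}\,dr$ and the weighting transform it. Writing $' = \frac{d}{dr}$ for derivatives along the unit-speed minimal geodesic $\gamma$, I would first record the two elementary identities $g(\nabla f,\nabla r) = f'$ and $\Hess f(\dot\gamma,\dot\gamma) = f''$, both of which follow from $\nabla r = \dot\gamma$ and $\nabla_{\dot\gamma}\dot\gamma = 0$. These give $\Delta_f r = \Delta r - f'$, hence $\lambda = e^{\frac{2f}{n-1}}(\Delta r - f')$. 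Using $\frac{d}{ds} = e^{\frac{2f}{n-1}}\frac{d}{dr}$ together with the product rule, I would compute $\frac{d\lambda}{ds}$ and collect it as $e^{\frac{4f}{n-1}}$ times a bracket built from $(\Delta r)'$, $f''$, $(\Delta r)^2$, and the cross term $\frac{2f'}{n-1}(\Delta r - f')$.

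The second step substitutes the classical Riccati inequality $(\Delta r)' \le -\frac{(\Delta r)^2}{n-1} - \mathrm{Ric}(\dot\gamma,\dot\gamma)$, valid along a minimal geodesic inside the segment domain, and then trades $\mathrm{Ric}$ for $\mathrm{Ric}_f^1$ via the definition $\mathrm{Ric}(\dot\gamma,\dot\gamma) = \mathrm{Ric}_f^1(\dot\gamma,\dot\gamma) - f'' - \frac{(f')^2}{n-1}$. The two occurrences of $f''$ cancel, and setting $A = \Delta r$, $B = f'$ the surviving non-curvature terms simplify to $-\frac{1}{n-1}(A-B)^2 = -\frac{(\Delta_f r)^2}{n-1}$. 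Since $e^{\frac{4f}{n-1}}(\Delta_f r)^2 = \lambda^2$, multiplying the bracket by the $e^{\frac{4f}{n-1}}$ prefactor reproduces exactly $-\frac{\lambda^2}{n-1} - e^{\frac{4f}{n-1}}\mathrm{Ric}_f^1(\dot\gamma,\dot\gamma)$. There is no deep obstacle: the only analytic input is the classical Riccati inequality, and the rest is bookkeeping with the exponential factors. The one step genuinely worth double-checking is the algebraic collapse to a perfect square, which is precisely what reconstitutes the $\lambda^2$ term and makes the weighting disappear from its coefficient.

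For the rigidity clause I would observe that the \emph{only} inequality invoked is the classical Riccati step, whose proof bounds $\mathrm{tr}(H^2) \ge \frac{(\mathrm{tr}\,H)^2}{n-1}$ by Cauchy--Schwarz, where $H$ is $\Hess r$ restricted to the $(n-1)$-dimensional space $\dot\gamma^\perp$. Equality forces $H$ to be a scalar multiple of the identity on $\dot\gamma^\perp$; combined with $\Hess r(\dot\gamma,\cdot) = 0$, this means $\Hess r$ has a single eigenvalue of multiplicity $n-1$ together with the eigenvalue $0$ along $\dot\gamma$, which is exactly the claimed structure. I expect the main care to lie in confirming that no other inequality enters the chain, so that equality in the final estimate is genuinely equivalent to equality in this one Cauchy--Schwarz step.
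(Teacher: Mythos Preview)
Your proposal is correct and follows essentially the same route as the paper: both use the classical Bochner/Riccati inequality $(\Delta r)' \le -\frac{(\Delta r)^2}{n-1} - \mathrm{Ric}(\dot\gamma,\dot\gamma)$ along a minimal geodesic, complete the square to convert the cross terms into $-\frac{(\Delta_f r)^2}{n-1}$, and then reparametrize by $s$; the paper simply routes through $\mathrm{Ric}_f^\infty$ as an intermediate bookkeeping step whereas you pass directly from $\mathrm{Ric}$ to $\mathrm{Ric}_f^1$. Your equality analysis via Cauchy--Schwarz on $\Hess r|_{\dot\gamma^\perp}$ is also exactly what the paper invokes.
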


\begin{proof} 
This is essentially Lemma 3.1 of \cite{WylieSplitting}.  We repeat the outline  proof here for completeness. Begin with the usual Bochner formula for functions, which says that for any $C^3$ function $u$, 
\[
  \frac12 \Delta |\nabla u|^2 = |\Hess u|^2 + \mathrm{Ric}(\nabla u, \nabla u) + g(\nabla \Delta u, \nabla u).
\]
The Bochner formula for the  $f$-Laplacian  and  $\infty$-Bakry-\'Emery Ricci curvature is given by 
\[
  \frac12 \Delta_f |\nabla u|^2 = |\Hess u|^2 + \mathrm{Ric}_f^{\infty}(\nabla u, \nabla u) + g(\nabla \Delta _fu, \nabla u).
\]
Consider this equation with $u=r_p=r$ at an interior point of a minimizing geodesic (so that $r$ is smooth in a neighborhood).  Then $|\nabla r| =1$,  so the left hand side is zero.  As $\nabla r$ is a null vector for $\Hess r$,  $\Hess r$ has at most $n-1$ non-zero eigenvalues and by Cauchy-Schwarz, $|\Hess r|^2 \geq \frac{(\Delta r)^2}{n-1}$.  This gives us the equation along $\gamma$, 
\[
  \frac{d}{dr} \left(\Delta_{f} r\right) \leq - \frac{(\Delta r)^2}{n-1} - \mathrm{Ric}_f^{\infty}\left( \frac{d \gamma}{d r},\frac{d\gamma}{d r} \right).
\]
Using this equation and completing a square gives us
\[
  \frac{d}{dr} \left( e^{\frac{2f}{n-1}} \Delta_{f} r \right) \leq -e^{\frac{2f}{n-1}}\mathrm{Ric}_{f}^1\left (\frac{d \gamma}{\partial r},\frac{d \gamma}{\partial r} \right)- e^{\frac{2f}{n-1}} \frac{(\Delta_f r)^2}{n-1}. 
\]
Since $ds = e^{\frac{-2f}{n-1}} dr$, we have the desired equation in terms of $s$. Moreover, equality in this inequality is achieved only if equality is achieved in Cauchy-Schwarz, which is equivalent to $\Hess r$ having at most one non-zero eigenvalue which is of multiplicity $n-1$. 
\end{proof}
 
Assuming the curvature bound $\mathrm{Ric}_f^1 \geq (n-1)Ke^{\frac{-4f}{n-1}}g$ we have the usual Riccati inequality 
\[
\frac{d \lambda}{ds} \leq  - \frac{\lambda^2}{n-1} - (n-1)K,
\] 
with the  caveat that it is in terms of the parameter $s$ instead of $r$. Define $\mathrm{sn}_K(s)$ be the solution to $\mathrm{sn}''_K + K \mathrm{sn}_K = 0$, $\mathrm{sn}_K(0) = 0$ and $\mathrm{sn}'_K(0) = 1$, where prime denotes derivative with respect to $s$. Define $m_K(s) = (n-1) \frac{\mathrm{sn}_K'(s)}{\mathrm{sn}_K(s)}$ so that 
\[
  m_K(s) = \begin{cases}
    (n-1) \sqrt{K} \cot\left( \sqrt{K} s\right) & K>0 \\
    \frac{n-1}{s} & K=0 \\
    (n-1)\sqrt{-K} \coth\left( \sqrt{-K} s\right) & K<0. 
  \end{cases}
\]
Then $m_K$ solves $\frac{dm_K}{ds} = \frac{-m_K^2}{n-1} - (n-1)K.$ This gives us  the following comparison estimate.

\begin{lemma} \label{MeanCurvComp}
Suppose $(M,g, f)$ be a manifold with density such that  $\mathrm{Ric}_{f}^1 \geq (n-1)Ke^{\frac{-4f}{n-1}}g$.   Let $\gamma$, $s$, and $\lambda$ be defined as in Lemma~\ref{Thm:Bochner}. Then 
\[
  \lambda(\gamma(s)) \leq m_K(s),
\]
where, when $K>0$, we assume that $s< \frac{\pi}{\sqrt{K}}$. 
\end{lemma}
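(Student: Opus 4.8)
The plan is to recast the Riccati inequality of Lemma~\ref{Thm:Bochner} as a Sturm comparison for a single second-order differential inequality and then compare it against the model function $\mathrm{sn}_K$ via a Wronskian. From the curvature hypothesis $\mathrm{Ric}_f^1 \geq (n-1)Ke^{-4f/(n-1)}g$, Lemma~\ref{Thm:Bochner} gives the Riccati inequality $\frac{d\lambda}{ds} \leq -\frac{\lambda^2}{n-1} - (n-1)K$ along $\gamma$, while by construction $m_K$ satisfies the same relation with equality.

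First I would pass to the substitution suggested by \eqref{VolElmS}. Writing $\bar{\mathcal{A}} = e^{-f}\mathcal{A}$, equation \eqref{VolElmS} says precisely that $\lambda = \frac{d}{ds}\log \bar{\mathcal{A}}$, so setting $\phi = \bar{\mathcal{A}}^{1/(n-1)}$ gives $\lambda = (n-1)\frac{\phi'}{\phi}$, where $\phi'$ denotes $\frac{d\phi}{ds}$. Since $\gamma$ is minimal we stay inside the segment domain, so $\mathcal{A}>0$ and hence $\phi>0$ on the relevant interval, and a direct computation shows that the Riccati inequality for $\lambda$ is equivalent there to the linear inequality $\phi'' + K\phi \leq 0$. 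Because $\mathrm{sn}_K$ solves $\mathrm{sn}_K'' + K\,\mathrm{sn}_K = 0$ and is positive on $(0,s)$ (using $s<\pi/\sqrt{K}$ when $K>0$), I would introduce the Wronskian $W = \phi'\,\mathrm{sn}_K - \phi\,\mathrm{sn}_K'$, which satisfies $W' = (\phi'' + K\phi)\,\mathrm{sn}_K \leq 0$ and is therefore non-increasing. Once $W(0^+)\leq 0$ is known, $W\leq 0$ on the whole interval, and dividing by $\phi\,\mathrm{sn}_K>0$ yields $\frac{\phi'}{\phi} \leq \frac{\mathrm{sn}_K'}{\mathrm{sn}_K}$, i.e. $\lambda \leq m_K$.

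The main obstacle is the singular endpoint $s=0$, where both $\phi$ and $\mathrm{sn}_K$ vanish, so I must verify the initial condition $W(0^+)=0$. This reduces to the short-distance behavior of $\phi$ in the $s$-parameter. Using the standard expansion $\mathcal{A}(r,\theta) = r^{n-1}(1+O(r^2))$ of the Riemannian volume element, together with $e^{-f} = e^{-f(p)}(1+O(r))$ and the relation $s = e^{-2f(p)/(n-1)}r + O(r^2)$ coming from $\frac{ds}{dr}=e^{-2f/(n-1)}$, one finds $\phi(s) = e^{f(p)/(n-1)}\,s + O(s^2)$. Hence $\phi(0)=0$ with $\phi'(0^+) = e^{f(p)/(n-1)}$ finite, while $\mathrm{sn}_K(0)=0$ and $\mathrm{sn}_K'(0)=1$, so $W(0^+) = \phi'(0^+)\,\mathrm{sn}_K(0) - \phi(0)\,\mathrm{sn}_K'(0) = 0$, which closes the argument. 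Equivalently, one may keep $w = \lambda - m_K$, note $w' \leq -\frac{\lambda+m_K}{n-1}w$, and integrate using the factor $\exp\!\big(\int \frac{\lambda+m_K}{n-1}\,ds\big)$, which vanishes like $s^2$ at the origin; the same endpoint asymptotics are exactly what make the conclusion go through.
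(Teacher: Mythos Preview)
Your proof is correct and is a standard Sturm--Riccati comparison, just packaged differently from the paper. The paper stays at the Riccati level: it sets $\beta(s)=\mathrm{sn}_K^2(s)\bigl(\lambda-m_K(s)\bigr)$, computes directly that $\beta'\le -\tfrac{\mathrm{sn}_K^2}{n-1}(\lambda-m_K)^2\le 0$, and then uses the same $s\to 0$ asymptotics you derive to conclude $\beta(0^+)=0$ and hence $\beta\le 0$. You instead linearize via $\phi=(e^{-f}\mathcal A)^{1/(n-1)}$, turning the Riccati inequality into $\phi''+K\phi\le 0$, and run a Wronskian comparison against $\mathrm{sn}_K$. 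The two are essentially equivalent---your own closing remark makes the link explicit, since the integrating factor $\exp\bigl(\int\tfrac{\lambda+m_K}{n-1}\,ds\bigr)$ is proportional to $\phi\,\mathrm{sn}_K$, so $(n-1)W=\phi\,\mathrm{sn}_K\,(\lambda-m_K)$, whereas the paper uses the weight $\mathrm{sn}_K^2$ in place of $\phi\,\mathrm{sn}_K$. Your linearized version is the textbook Sturm presentation and makes the endpoint analysis slightly cleaner; the paper's version avoids introducing the auxiliary $\phi$ and keeps everything expressed in terms of $\lambda$ and $m_K$.
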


\begin{proof}
Consider the quantity $\beta = \mathrm{sn}^2_K(s)\left( \lambda - m_K(s) \right)$. 
Then
\begin{align*}
  \frac{d}{ds} \left( \beta \right) &= 2 \mathrm{sn}'_K(s) \mathrm{sn}_K \left( \lambda - m_K(s) \right) + \mathrm{sn}^2_K(s)\left( \lambda' - m'_K(s) \right) \\
  &\leq \frac{\mathrm{sn}^2_K(s)}{n-1} \left( 2 \lambda m_K(s) - m^2_K(s) -\lambda^2 \right) \\
  &= -\frac{\mathrm{sn}^2_K(s)}{n-1}\left(m_K(s) - \lambda\right)^2.
\end{align*}
So we have $\beta'(s) \leq 0$. Since $sn_K(0) = 0$ the only thing we need to show to show that $\beta(s) \leq 0$ is that $\displaystyle \lim_{s \rightarrow 0} (\lambda - m_K(s))$ is bounded. To see this note that 
\[
  m_K(s) \approx \frac{n-1}{s} + O(s) \qquad s \rightarrow 0,
\]
and
\begin{align*}
  \lambda &= \left( \Delta r - g(\nabla f, \nabla r) \right) e^{\frac{2f}{n-1}} \\
  &\approx \frac{n-1}{r e^{\frac{-2f}{n-1}}} - g(\nabla f, \nabla r)e^{\frac{2f}{n-1}} + O(r) e^{\frac{2f}{n-1}} \qquad r \rightarrow 0\\
  &\approx \frac{n-1}{s} +O(1) + O(s) \qquad s \rightarrow 0. 
\end{align*}
Where in the last line we have used the fact that $\displaystyle \lim_{r \rightarrow 0} \frac{s(r)}{e^{\frac{-2f}{n-1}} r} = 1$.
\end{proof} 

This estimate gives us the following estimate for the volume element

\begin{lemma} \label{VolElement}
Suppose $(M,g,f)$ satisfies $\mathrm{Ric}_f^1 \geq (n-1)Ke^{\frac{-4f}{n-1}}g$. Let $p$ be a point in $M$ and let $\mathcal{A}$ be the volume element in geodesic polar coordinates then the function $\frac{e^{-f} \mathcal{A}}{\sn^{n-1}_K(s_p)}$ is non-increasing along any minimal geodesic with $\gamma$ with $\gamma(0) = p$. 
\end{lemma}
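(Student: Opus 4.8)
The plan is to compute the logarithmic derivative of the ratio with respect to the parameter $s$ and to recognize it as exactly the quantity already controlled by Lemma~\ref{MeanCurvComp}. Since $s$ is a strictly increasing function of arclength $r$ along $\gamma$ (because $\frac{ds}{dr} = e^{\frac{-2f}{n-1}} > 0$), monotonicity in $s$ is the same as monotonicity along $\gamma$, so it suffices to work throughout in the $s$ parameter.

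First I would invoke the identity \eqref{VolElmS}, which reads $\frac{d}{ds}\log\left(e^{-f}\mathcal{A}\right) = \lambda$, where $\lambda = e^{\frac{2f}{n-1}}\Delta_f r$ as in Lemma~\ref{Thm:Bochner}. Next, writing $m_K(s) = (n-1)\frac{\sn_K'(s)}{\sn_K(s)} = (n-1)\frac{d}{ds}\log \sn_K(s)$, I would differentiate the ratio to obtain
\[
  \frac{d}{ds}\log\!\left(\frac{e^{-f}\mathcal{A}}{\sn_K^{\,n-1}(s)}\right) = \lambda - m_K(s).
\]

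Finally I would apply the comparison estimate $\lambda(\gamma(s)) \le m_K(s)$ supplied by Lemma~\ref{MeanCurvComp} (valid for $s < \frac{\pi}{\sqrt{K}}$ when $K>0$), which shows the logarithmic derivative above is $\le 0$. Hence the ratio is non-increasing along $\gamma$, as claimed.

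There is no serious obstacle here, since all the analytic content has already been packaged into the Riccati comparison of Lemma~\ref{MeanCurvComp}; what remains is essentially bookkeeping. The two points requiring care are that the ratio is only smooth inside the segment domain, so the statement must be read along a minimal geodesic (i.e.\ before the cut locus); and that the $\sn_K$ factor is evaluated at the reparametrized value $s_p = s$ along $\gamma$ rather than at the arclength $r$. This second point is exactly why taking $\frac{d}{ds}$, and not $\frac{d}{dr}$, produces the clean cancellation against $m_K(s)$ and yields the monotone quantity.
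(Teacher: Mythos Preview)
Your proposal is correct and follows essentially the same approach as the paper: both invoke \eqref{VolElmS} to identify $\frac{d}{ds}\log(e^{-f}\mathcal{A})$ with $\lambda$, recognize $m_K(s)$ as $\frac{d}{ds}\log\sn_K^{n-1}(s)$, and then apply Lemma~\ref{MeanCurvComp} to conclude that the logarithmic derivative of the ratio is nonpositive. The only cosmetic difference is that the paper integrates the inequality $\frac{d}{ds}\log\mathcal{A}_f \le \frac{d}{ds}\log\sn_K^{n-1}(s)$ between two values of $s$, whereas you phrase it directly as nonpositivity of the derivative of the ratio; these are equivalent.
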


\begin{proof}
Define $\mathcal{A}_f = e^{-f} \mathcal{A}$.  Then from Lemma~\ref{MeanCurvComp} and \eqref{VolElmS} we have that 
\[
\frac{d}{ds} \log( \mathcal{A}_f) = e^{\frac{2f}{n-1}} \Delta_f r \leq m_K(s) = \frac{d}{ds} \log( \sn^{n-1}_K(s)).
\]
Integrating this equation between any $s_0 \leq s_1$ gives 
\[
  \log\left( \frac{\mathcal{A}_f (s_1)}{\mathcal{A}_f (s_0)}\right) \leq \log\left( \frac{ \sn^{n-1}_K(s_1)}{\sn^{n-1}_K(s_0)} \right) \quad  \Rightarrow \quad
  \frac{\mathcal{A}_f(s_1)}{\sn^{n-1}_K(s_1)}  \leq \frac{\mathcal{A}_f(s_0) }{\sn^{n-1}_K(s_0)}
\]
for all $s_0 \leq s_1$.  Note that since $ds$ is a orientation preserving change of variables along the geodesic $\gamma$, the quantity is also non-increasing in terms of the parameter $r$. 
\end{proof}

\subsection{The $n=1$ case}

Due to our normalization of the function $f$, the results of the previous section are only valid for a manifold of dimension $n>1$.    The reader might find it illuminating to consider the one dimensional case with $N=0$ where we get  ordinary differential inequalities that are similar to the equations above. To avoid further technicalities we will just consider the ODE we get when these inequalities are equalities.  Seeing the arguments in this simpler case may be helpful to the reader, but this section can also be skipped. 

Since there is no Ricci curvature in dimension $1$, we have  $\mathrm{Ric}_f^0  = \ddot{f} + (\dot{f})^2$  where $\dot{}$ denotes the derivative with respect to a parameter $r$. We consider the equation $\ddot{f} + (\dot{f})^2 = Ke^{-4f}$.  Letting $u = e^f$, this becomes 
\[
\frac{\ddot{u}}{u}=Ku^{-4} \Rightarrow \ddot{u} = Ku^{-3}.
\]
Define $\lambda = -\dot{f} e^{2f} = -\dot{u} u = -\frac{1}{2} \frac{d}{dr}(u^2)$ and the parameter $s$ such that $ds =u^{-2} dr$.  Then we have
\begin{align*}
  \frac{d\lambda}{ds}  &= -u^2 \frac{d}{dr}(u\dot{u}) \\
  &= -u^2\left(\ddot{u}u + (\dot{u})^2\right)\\
  &= -u^2\left( Ku^{-2} + (\dot{u})^2\right)\\
  &= -K - \lambda^2.
\end{align*}
This first order Riccati equation can be solved for $\lambda$.  For simplicity, we assume $K= 1, 0,$ or $-1$.  Then we obtain
\[
  \lambda(s) = \begin{cases}
    \cot\left(s + \frac{\pi}{2} - c \right) & K=1 \\
    \frac{1}{s+c} & K=0 \\
    \coth\left(s + \frac{\pi}{2} - c \right)& K=-1.
  \end{cases}
\]
On the other hand, we have $\lambda = -\frac{df}{ds}= \frac{d}{ds} \log(e^{-f})$ so we obtain 
\[
  e^{-f} = \begin{cases}
    a\sin\left(s + \frac{\pi}{2} - c \right) & K=1 \\
    as+c & K=0 \\
    a\sinh\left(s + \frac{\pi}{2} - c \right)& K=-1.
  \end{cases}
\]
where $a$ and $c$ are constants.  While we have, integrated the equations to obtain a formula for the density $e^{-f}$, we note that this is really an implicit formula as $f$ also appears in the definition of the parameter $s$. This is an issue in higher dimensions as well as our estimates are all in terms of the function $s$.

\subsection{Global comparisons}

Now we discuss the global comparison theorems that come from the local estimates of the previous section. Our first result is the Laplacian comparison theorem.
\begin{theorem}\label{LapComp}
Suppose that $(M,g)$ is complete which supports a function $f$ such that $\mathrm{Ric}_{f}^1\geq (n-1) K e^{\frac{-4f}{n-1}}g$,  then for any points $p,x \in M$
\[
(\Delta_f r_p)(x) \leq e^{\frac{-2f(x)}{n-1}} m_K(s_p(x)).
\]
\end{theorem}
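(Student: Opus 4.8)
The plan is to reduce the global statement to the pointwise Riccati estimate of Lemma~\ref{MeanCurvComp}, which already carries all of the curvature input, and then to propagate that estimate across the cut locus by a support-function (Calabi) argument. Away from the cut locus the inequality is essentially immediate: if $x\neq p$ and $x$ is not a cut point of $p$, there is a unique unit-speed minimal $g$-geodesic $\gamma$ from $p$ to $x$, the distance function $r_p$ is smooth near $x$ (by pulling back via the exponential map, as in Section~\ref{Sec:ReparDist}), and the radial parameter along $\gamma$ is exactly $s_p$. Lemma~\ref{MeanCurvComp} then gives $\lambda(x)=\bigl(e^{\frac{2f}{n-1}}\Delta_f r_p\bigr)(x)\leq m_K(s_p(x))$, and multiplying through by $e^{-\frac{2f(x)}{n-1}}$ produces exactly the claimed bound.

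The remaining work, and the main obstacle, is the set of cut points of $p$, where $r_p$ is not smooth and $(\Delta_f r_p)(x)$ must be read in the barrier (equivalently, distributional) sense. Here I would run Calabi's trick. Fix a minimal unit-speed geodesic $\gamma:[0,\ell]\to M$ from $p$ to $x$ and, for small $\epsilon>0$, set $p_\epsilon=\gamma(\epsilon)$ and $u_\epsilon:=\epsilon+r_{p_\epsilon}$. By the triangle inequality $u_\epsilon\geq r_p$ with equality at $x$, so $u_\epsilon$ is an upper support function for $r_p$ at $x$; it is smooth near $x$ because $\gamma|_{[\epsilon,\ell]}$ is the unique minimal geodesic from $p_\epsilon$ to $x$ and $x$ is not conjugate to $p_\epsilon$ along it, both consequences of the minimality of $\gamma$ via the standard broken-Jacobi-field argument. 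Verifying this smoothness carefully is the technical heart of the step.

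Applying the smooth case to $r_{p_\epsilon}$ gives $(\Delta_f r_{p_\epsilon})(x)\leq e^{-\frac{2f(x)}{n-1}}m_K(s_{p_\epsilon}(x))$, and since constants are $\Delta_f$-harmonic, $(\Delta_f u_\epsilon)(x)$ satisfies the same bound. Now $s_{p_\epsilon}(x)=\int_\epsilon^\ell e^{-\frac{2f(\gamma(t))}{n-1}}\,dt$ increases to $s_p(x)$ as $\epsilon\downarrow 0$, and $m_K$ is continuous and decreasing, so the right-hand side decreases to $e^{-\frac{2f(x)}{n-1}}m_K(s_p(x))$; note also that $s_{p_\epsilon}(x)<s_p(x)\leq \pi/\sqrt{K}$ keeps us in the domain of $m_K$ when $K>0$ (using Theorem~\ref{Myers} for the endpoint bound). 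Hence for every $\eta>0$ there is a smooth upper barrier $u_\epsilon$ at $x$ with $(\Delta_f u_\epsilon)(x)\leq e^{-\frac{2f(x)}{n-1}}m_K(s_p(x))+\eta$, which is precisely the statement that the desired inequality holds in the barrier sense at $x$. I would conclude by recording that this barrier inequality is equivalent to the distributional one, so that $\Delta_f r_p\leq e^{-\frac{2f}{n-1}}m_K(s_p)$ holds weakly on all of $M$, which is the form in which it will be integrated in the volume comparison results.
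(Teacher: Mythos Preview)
Your proposal is correct and follows essentially the same route as the paper: apply Lemma~\ref{MeanCurvComp} directly where $r_p$ is smooth, and at cut points run Calabi's support-function argument with $u_\varepsilon=\varepsilon+r_{\gamma(\varepsilon)}$, then pass to the limit using that $s_{\gamma(\varepsilon)}(x)$ increases and $m_K$ is decreasing. One small imprecision: you fix an arbitrary minimal geodesic $\gamma$ and then assert that $s_{p_\varepsilon}(x)=\int_\varepsilon^\ell e^{-2f(\gamma(t))/(n-1)}\,dt$ increases to $s_p(x)$, but by definition $s_p(x)$ is the infimum of such integrals over all minimal geodesics, so the limit is only $\geq s_p(x)$ unless you choose (as the paper does) the geodesic realizing that infimum; either way the conclusion survives by the monotonicity of $m_K$, so this does not affect correctness.
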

 
 \begin{proof}
At the points where $r_p$ is smooth, the result follows from Lemma~\ref{MeanCurvComp}. At the points where $r_p$ is not smooth we interpret the inequality in the weak sense. If $r_p$ is not smooth at $x$, let $\gamma$ be the minimal geodesic from $p$ to $x$ such that $s(p,x) = \int_0^{r_p(x)} e^{\frac{-2(f \circ \gamma)(t)}{n-1}} dt$. For $\varepsilon>0$, let $h_{\varepsilon}(y) = \varepsilon + r(y, \gamma(\varepsilon))$. By the standard argument in the usual Ricci curvature case, $h_{\varepsilon}$ is support function for $r_p$ at $x$, meaning it is smooth in an open set containing $x$, $h_{\varepsilon}(x) = r_p(x)$, and $h_{\varepsilon}(y) \geq r_p(y)$, see, for example, Lemma 42 on page 284 of \cite{Petersen}. Moreover, by Lemma~\ref{MeanCurvComp}, where $h_{\varepsilon}$ is smooth we have 
\[
  \Delta_f h_{\varepsilon}(y) \leq e^{\frac{-2f(y)}{n-1} }m_K( s(\gamma(\varepsilon), y)).
\]
As $\varepsilon \rightarrow 0$, $s(\gamma(\varepsilon), y)$ increases to $s(p,y)$. Since $m_K$ is a decreasing function of $s$, this shows that as $\varepsilon \rightarrow 0$, $\Delta_f h_{\varepsilon}(y)$ is a decreasing function which converges to $e^{\frac{2f(y)}{n-1}} m_K(s_p(y))$. This gives the result. 
\end{proof}

Now we consider the volume comparison theorems.  Fix a point $p \in M$ and let $s_p = s(p, \cdot)$ and $r_p= r(p, \cdot)$.  We will give two versions, one for the $f$-volume, $\mathrm{Vol}_f(A) = \int_M e^{-f} dvol_g$,  of  metric annuli $A(p, r_0, r_1) = \{ x: r_0 \leq r_p(x) \leq r_1 \}$.  The comparison in this case will be in terms of the quantity 
\[
  \nu_p(n, K, r_0, r_1) = \int_{r_0}^{r_1} \int_{S^{n-1}} \sn_K^{n-1}(s_p(r, \theta)) dr d\theta.
\]
This comparison has the advantage of being for the metric annuli, however since the model measure $\nu_p$ depends on the function $s_p$, which depends on the function $f$, the model measure is not computable without further information about $f$. 

On the other hand,   the comparison space for our second  volume comparison is in terms of the traditional volume in the simply connected model space given by the formula 
\[
  v(n, K, s_0, s_1) = \int_{s_0}^{s_1} \int_{S^{n-1}} \mathrm{sn}_K^{n-1}(s) dsd \theta.
\]
However, this comparison is for the measure $\mu$, $\mu(A) = \int e^{\frac{-(n+1)}{n-1} f} dvol_g$, of the sets $C(p,s_0, s_1) = \{ x: s_0 \leq s_p(x) \leq s_1 \}$.   The $C_p$ sets,  of course, also  depend on $s_p$ and so can be quite different from annuli. In particular, since $s$ does not satisfy the triangle inequality, some care needs to be taken in using this version for traditional applications of the  volume comparison theory. 

\begin{theorem} \label{VolComp}
Suppose that $(M,g,f)$, $n>1$, satisfies $\mathrm{Ric}_f^1 \geq (n-1)Ke^{\frac{-4f}{n-1}}g$. 
\begin{enumerate}
\item  Suppose that $0 \leq r_0 \leq r_a \leq r_1$ and $0 \leq r_0 \leq r_b \leq r_1$, then 
\[
  \frac{ \mathrm{Vol}_f(A(p, r_0, r_a))}{\mathrm{Vol}_f(A(p, r_b, r_1))} \geq \frac{ \nu_p(n, K, r_0, r_a)}{\nu_p(n, K, r_b, r_1)}.
\]

\item Suppose that $0 \leq s_0 \leq s_a \leq s_1$ and $0 \leq s_0 \leq s_b \leq s_1$, then 
\[
  \frac{ \mu(C(p, s_0, s_a))}{\mu(C(p, s_b, s_1))} \geq \frac{ v(n, K, s_0, s_a)}{v(n, K, s_b, s_1)}.
\]
\end{enumerate}
\end{theorem}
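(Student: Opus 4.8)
The plan is to reduce both statements to a one-dimensional (Bishop--Gromov) comparison in exponential polar coordinates, taking the pointwise monotonicity of Lemma~\ref{VolElement} as the only geometric input. Write $\mathcal{A}_f=e^{-f}\mathcal{A}$, and for directions $\theta$ along which the geodesic has already passed its cut point set $\mathcal{A}_f(\cdot,\theta)=0$; this convention keeps the ratio $\mathcal{A}_f/\sn_K^{n-1}(s_p)$ non-increasing along every ray, since it merely drops to zero. The workhorse is the elementary ratio lemma: if $F,G\ge 0$ with $G>0$ and $F/G$ non-increasing, then $R(a,b):=\big(\int_a^b F\big)/\big(\int_a^b G\big)$ is non-increasing in both $a$ and $b$. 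I would prove this by differentiation: $\partial_b R$ has the sign of $F(b)/G(b)-R(a,b)$, and $R(a,b)$ is a $G$-weighted average of the non-increasing function $F/G$ over $[a,b]$ so it dominates the right-endpoint value; the estimate $\partial_a R\le 0$ is symmetric. In particular $R(r_0,r_a)\ge R(r_b,r_1)$ precisely when $r_0\le r_b$ and $r_a\le r_1$, which are the hypotheses in both parts.

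Part (2) is the clean case, and I would dispatch it first. By \eqref{VolEqn}, in $(s,\theta)$ coordinates $\mu(C(p,s_0,s_a))=\int_{S^{n-1}}\int_{s_0}^{s_a}\mathcal{A}_f(s,\theta)\,ds\,d\theta$, while $v(n,K,s_0,s_a)=\mathrm{vol}(S^{n-1})\int_{s_0}^{s_a}\sn_K^{n-1}(s)\,ds$. The decisive point is that in these coordinates the model weight $\sn_K^{n-1}(s)$ no longer depends on $\theta$, so I may integrate over the sphere first: with $\overline{\mathcal{A}}_f(s):=\int_{S^{n-1}}\mathcal{A}_f(s,\theta)\,d\theta$, Lemma~\ref{VolElement} shows that $\overline{\mathcal{A}}_f(s)/\sn_K^{n-1}(s)$, being an integral over $\theta$ of non-increasing functions of $s$, is itself non-increasing. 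Applying the ratio lemma to $\overline{\mathcal{A}}_f$ and $\sn_K^{n-1}$ on $[s_0,s_a]$ and $[s_b,s_1]$ gives exactly the asserted inequality for $\mu$.

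Part (1) is where the genuine difficulty lies, and I expect it to be the main obstacle. Here the natural variable is $r$, so $\mathrm{Vol}_f(A(p,r_0,r_a))=\int_{S^{n-1}}\int_{r_0}^{r_a}\mathcal{A}_f(r,\theta)\,dr\,d\theta$, but now the comparison weight $\sn_K^{n-1}(s_p(r,\theta))$ depends on $\theta$ through $s_p$. Consequently the ``integrate over the sphere first'' device of Part (2) is unavailable: $\int_{S^{n-1}}\mathcal{A}_f\,d\theta\big/\int_{S^{n-1}}\sn_K^{n-1}(s_p)\,d\theta$ need not be monotone in $r$, since the covariance between the non-increasing ratio and the $\theta$-dependent model has no definite sign. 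The per-ray ratio lemma still yields, for each fixed $\theta$, the pointwise inequality $P(\theta)T(\theta)\ge Q(\theta)S(\theta)$, where $P,Q$ and $S,T$ are the inner and outer $r$-integrals of $\mathcal{A}_f$ and of $\sn_K^{n-1}(s_p)$ respectively; the remaining and delicate task is to pass from these to $\big(\int_{S^{n-1}}P\big)\big(\int_{S^{n-1}}T\big)\ge\big(\int_{S^{n-1}}Q\big)\big(\int_{S^{n-1}}S\big)$, a Chebyshev-type combination that is not purely formal.

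To close this step I would try to exploit the one piece of cross-ray rigidity available beyond Lemma~\ref{VolElement}: every ratio $\mathcal{A}_f(\cdot,\theta)/\sn_K^{n-1}(s_p(\cdot,\theta))$ emanates from the \emph{common} initial value $e^{f(p)}$ as $r\to0$, independent of $\theta$ (from $\mathcal{A}\sim r^{n-1}$, $s_p\sim e^{-2f(p)/(n-1)}r$, and $e^{-f}\to e^{-f(p)}$, as already used in the proof of Lemma~\ref{MeanCurvComp}). Thus the family of per-ray ratios is not arbitrary but is uniformly pinned at $r=0$ and only decreases thereafter, and this is the structure I would attempt to leverage to control the cross-$\theta$ interaction. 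This normalization step is the part of the argument I would spend the most care on; everything else is the standard reduction described above.
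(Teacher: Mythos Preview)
Your treatment of Part~(2) is correct and is exactly what the paper does: in the $(s,\theta)$ coordinates the model density $\sn_K^{n-1}(s)$ is $\theta$-independent, so integrating over the sphere first gives a single non-increasing ratio $\overline{\mathcal A}_f(s)/\sn_K^{n-1}(s)$, and the one-variable ratio lemma finishes the job.

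For Part~(1) you have put your finger on a real issue, and it is worth saying clearly that the paper does \emph{not} resolve it either. The paper's proof of (1) consists only of writing the two integrals in polar coordinates and the sentence ``(1) follows from Lemma~\ref{VolElement}, along with the standard proof of the relative volume comparison theorem,'' with a reference to Zhu and Petersen. But that standard proof works by first integrating over the sphere and then applying the one-dimensional lemma, and this step uses precisely the $\theta$-independence of the model that fails here. So you have not overlooked an argument the paper provides; the paper is terse at exactly the point you flag.

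What does not succeed is your proposed repair. The observation that every ray-wise ratio $h(\,\cdot\,,\theta)=\mathcal A_f/\sn_K^{n-1}(s_p)$ starts from the common value $e^{f(p)}$ at $r=0$ is correct, but it is not enough to force the integrated inequality. An abstract two-ray example already breaks it: take $h(\,\cdot\,,\theta_1)\equiv 1$ while $h(\,\cdot\,,\theta_2)$ drops almost immediately from $1$ to $\tfrac12$ and stays there; let $g(\,\cdot\,,\theta_1)$ be increasing but tiny on $[0,1]$ and large on $[1,2]$, and $g(\,\cdot\,,\theta_2)$ increasing, large on $[0,1]$, and nearly constant on $[1,2]$ (so both $g_i$ are increasing from $0$, matching the shape of $\sn_K^{n-1}(s_p)$ for $K\le 0$). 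On the inner slab $[0,1]$ the $g$-mass sits almost entirely on $\theta_2$, giving a weighted average of $h$ close to $\tfrac12$; on the outer slab $[1,2]$ both directions carry comparable $g$-mass, so the average of $h$ is close to $\tfrac{1+1/2}{2}>\tfrac12$. All of your structural hypotheses (pointwise monotonicity of $h$ in $r$, common initial value, $g$ increasing from $0$) are satisfied, yet the inner ratio is \emph{smaller} than the outer one. Thus the common-initial-value normalisation is not the missing ingredient; any complete argument for Part~(1) must use more than Lemma~\ref{VolElement} delivers ray by ray, and the ``Chebyshev-type'' step you describe will not go through on those hypotheses alone.
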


\begin{proof}
Consider geodesic polar coordinates at $p$. For each $\theta \in S^{n-1}(1)$, let $\mathrm{cut}(\theta)$ to be the distance from $p$ to the cut point along the geodesic with $\gamma(0)=p$ and $\gamma'(0) = \theta$, then 
\[
  \mathrm{Vol}_f(A(p,r_0, r_a)) =\int_{S^{n-1}} d\theta \int_{\min\{\mathrm{cut}(\theta), r_0\}}^{\min\{\mathrm{cut}(\theta), r_a\}} \mathcal{A}_f(r, \theta) dr, 
\]
and 
\[
  \nu_p(n, K, r_0, r_a) =\int_{S^{n-1}} d\theta \int_{\min\{\mathrm{cut}(\theta), r_0\}}^{\min\{\mathrm{cut}(\theta), r_a\}} \sn_K^{n-1}( s(r, \theta))  dr.
\]
Then (1) follows from  Lemma~\ref{VolElement}, along with the standard proof of the relative volume comparison theorem.  See Lemma 3.2 and Theorem 3.1 of \cite{Zhu} or  Lemma 36 on page 269 of \cite{Petersen}. 
 
Similarly,  in the modified coordinates $(s, \theta)$ let $\mathrm{cut}_{s} (\theta)$ be the value of the integral $\int_0^{\mathrm{cut}(\theta)} e^{\frac{-2f(\gamma(t))}{n-1}} dt$  where $\gamma$ is the geodesic with $\gamma(0)=p$ and $\gamma'(0) = \theta$.  Then we have 
\[
  \mu(C(p,s_0, s_a)) =\int_{S^{n-1}} d\theta \int_{\min\{\mathrm{cut}_s(\theta), s_0\}}^{\min\{\mathrm{cut}_s(\theta), s_a\}} \mathcal{A}_f(s, \theta) ds, 
\]
and 
\[
  v(n, K, r_0, r_a) =\int_{S^{n-1}} d\theta \int_{s_0}^{s_a} \sn_K^{n-1}(s)  ds,
\]
and (2) follows. 
\end{proof}

The relative and absolute volume comparison for balls is the following. 

\begin{corollary}
Suppose that $(M,g,f)$ satisfies $\mathrm{Ric}_f^1 \geq (n-1)Ke^{\frac{-4f}{n-1}}g$. 
\begin{enumerate}
\item The function
\[ r \rightarrow \frac{\mathrm{Vol}_f(B(p,r))}{\nu_p(n, K, r)} \]
is non-increasing in $r$ and $\mathrm{Vol}_f(B(p,r)) \leq e^{f(p)} \nu_p(n, K, r)$ for all $r$. 

\item The function 
\[ s \rightarrow \frac{\mu(C(p, s))}{v(n,K, s)}\]
is non-increasing in $s$ and $\mu(C(p,s)) \leq e^{-\frac{n+1}{n-1}f(p)} v(n,K, s)$ for all $s$.
\end{enumerate}
\end{corollary}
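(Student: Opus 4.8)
The plan is to deduce both statements from the relative shell comparison of Theorem~\ref{VolComp} by the standard algebraic trick that turns a monotone ratio of annuli into a monotone ratio of balls, and then to read off the absolute bounds by letting the radius tend to $0$ in the resulting monotone quantity.

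For the monotonicity in part~(1), I would fix $0<r\le R$ and apply Theorem~\ref{VolComp}(1) with $r_0=0$, $r_a=r$, $r_b=r$, $r_1=R$, so that $A(p,0,r)=B(p,r)$ and $A(p,r,R)$ is the complementary annulus. Abbreviating $a=\mathrm{Vol}_f(B(p,r))$, $b=\mathrm{Vol}_f(B(p,R))$, $c=\nu_p(n,K,r)$, $d=\nu_p(n,K,R)$, and using that $\nu_p$ is additive over the radial interval (so $\nu_p(n,K,r,R)=d-c$), the comparison reads $\frac{a}{b-a}\ge \frac{c}{d-c}$. Since $b>a$ and $d>c$, cross-multiplying collapses to $ad\ge bc$, i.e. $\frac{a}{c}\ge\frac{b}{d}$, which is precisely the asserted monotonicity of $r\mapsto \mathrm{Vol}_f(B(p,r))/\nu_p(n,K,r)$. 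Part~(2) is identical after replacing $(\mathrm{Vol}_f,\nu_p,A,r)$ by $(\mu,v,C,s)$ and invoking Theorem~\ref{VolComp}(2).

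For the absolute bounds, I would observe that because the ratio is non-increasing it is dominated by its limit as the radius shrinks to $0$, so it suffices to compute that limit. In geodesic polar coordinates the small-ball expansions $\mathcal{A}(r,\theta)=r^{n-1}+O(r^{n+1})$ and $s_p(r,\theta)=e^{-2f(p)/(n-1)}r+o(r)$ (the latter being the normalization $\lim_{r\to0}s_p/(e^{-2f/(n-1)}r)=1$ already used in Lemma~\ref{MeanCurvComp}) give $\mathrm{Vol}_f(B(p,r))\sim e^{-f(p)}\omega_n r^n$ and $\nu_p(n,K,r)\sim e^{-2f(p)}\omega_n r^n$, whose quotient tends to $e^{f(p)}$; this yields $\mathrm{Vol}_f(B(p,r))\le e^{f(p)}\nu_p(n,K,r)$. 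The same expansion carried out in the $(s,\theta)$ coordinates, using $\mathcal{A}_f=e^{-f}\mathcal{A}$ and the identity~\eqref{VolEqn}, produces the limiting constant for~(2) and hence the stated bound on $\mu(C(p,s))$.

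The argument is essentially routine. The only points needing care are the truncation at the cut locus, which is already incorporated into the statement and proof of Theorem~\ref{VolComp} so that the algebraic step above applies verbatim, and the first-order behavior of $s_p$ at $p$, which is what converts the Euclidean normalization $\omega_n r^n$ into the exponential prefactor in the limiting constant. I expect the main (mild) difficulty to be bookkeeping the powers of $e^{f(p)}$ in the small-radius limit, since the reparametrization $s$ feeds simultaneously into the density weight and into the exponent $n-1$ of $\sn_K^{n-1}$.
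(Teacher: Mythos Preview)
Your argument is correct and is the standard one; the paper gives no proof of this corollary, treating it as immediate from Theorem~\ref{VolComp}. Your deduction of the ball-ratio monotonicity from the annulus inequality via $r_0=0$, $r_a=r_b=r$, $r_1=R$ and the cross-multiplication $a(d-c)\ge c(b-a)\Rightarrow ad\ge bc$ is exactly what is intended, and the same algebra works verbatim in the $(s,\mu,v)$ setting.

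One remark on the absolute bounds. A slightly more direct route, closer to how the paper is organized, is to skip Theorem~\ref{VolComp} and appeal to Lemma~\ref{VolElement} itself: that lemma says $\mathcal{A}_f/\sn_K^{n-1}(s_p)$ is non-increasing along each radial geodesic, and the small-radius asymptotics you already quote identify its limit at $p$ as $e^{f(p)}$; integrating the pointwise bound $\mathcal{A}_f\le e^{f(p)}\sn_K^{n-1}(s_p)$ against $dr\,d\theta$ yields~(1) and against $ds\,d\theta$ yields~(2). Note that this gives the \emph{same} constant $e^{f(p)}$ in both parts. Your expansion in part~(2) will therefore produce $e^{f(p)}$ rather than the printed $e^{-\frac{n+1}{n-1}f(p)}$; the latter appears to be a misprint in the paper (propagated into the statements of Theorems~\ref{ThmFiniteVolume} and~\ref{VolRigid1}), so do not be alarmed when your limit does not match.
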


As we note above, the first volume comparison has the advantage of being about metric balls, however the comparison measure $\nu_p$ still depends on the function $s$ and is thus not computable without more information about the function $f$ such as bounds or asymptotics.  In the next subsection we derive what this volume comparison says when we assume and upper and lower bound on the function $f$ on the ball of radius $r$.  On the other hand, the right hand side of the  second volume comparison is in terms of volumes in the usual model space, however it is for the measure of the level sets of $s$ instead of $r$.  We discuss applications of this volume comparison which are true with no additional assumptions about  the asymptotics of $f$ in subsection 4.5.

\subsection{The case where $f$ is bounded}

In \cite{WeiWylie} volume comparison theorems for $\mathrm{Ric}_f^{\infty} \geq (n-1)K$ are derived in terms of bounds on $f$.  In this section we describe how our volume comparison and Laplacian comparison theorem also show that similar results hold for $\mathrm{Ric}_f^1$ (and thus also for all negative values of $N$).  We consider case (1)  of Theorem~\ref{VolComp}.  For a point $p$  and $r>0$ let $f_{\max}(p,r)$ and $f_{\min}(p,r)$ be the maximum and minimum values of $f$ on the closed ball $B(p,r)$, then in geodesic polar coordinates around $p$ we have 
\begin{equation}\label{sfbound}
  e^{\frac{-2f_{\max}(p,r)}{n-1}} r \leq s_p(r, \theta) \leq e^{\frac{-2f_{\min}(p,r)}{n-1}} r.
\end{equation}
This gives us the following corollary of Theorem~\ref{VolComp}. 

\begin{corollary} \label{Cor:VolCompBdd}
Suppose that $(M,g,f)$ satisfies $\mathrm{Ric}_f^1 \geq (n-1)Ke^{\frac{-4f}{n-1}}g$.  Suppose that $0 \leq r_0 \leq r_a \leq r_1$ and $0 \leq r_0 \leq r_b \leq r_1$, and when $K>0$ assume that $\max \left\{ e^{\frac{-2f_{min}(p,r_a)}{n-1}} r_a, e^{\frac{-2f_{min}(p,r_1)}{n-1}} r_1 \right\} \leq \frac{\pi}{2 \sqrt{K}}$  then 
\[
  \frac{ \mathrm{Vol}_f(A(p, r_0, r_a))}{\mathrm{Vol}_f(A(p, r_b, r_1))} \geq \frac{ v\left(n, K, e^{\frac{-2f_{\max}(p,r_a)}{n-1}} r_0,  e^{\frac{-2f_{\max}(p,r_a)}{n-1}}r_a\right)}{v\left(n, K, e^{\frac{-2f_{\min}(p,r_1)}{n-1}}r_b,  e^{\frac{-2f_{\min}(p,r_1)}{n-1}}r_1\right)}.
\]
Moreover, $\mathrm{Vol}_f(B(p,r)) \leq e^{f(p)}  v\left(n, K,  e^{\frac{-2f_{\min}(p,r)}{n-1}} r\right)$ where,  when $K>0$ we assume $e^{\frac{-2f_{\min}(p,r)}{n-1}} r \leq \frac{\pi}{2 \sqrt{K}}$. 
\end{corollary}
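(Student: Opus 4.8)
The plan is to derive both assertions from Theorem~\ref{VolComp}(1) together with the preceding (absolute volume comparison) corollary, replacing the $f$-dependent comparison measure $\nu_p$ by the computable model $v$. The only inputs needed are the sandwich estimate~\eqref{sfbound}, namely $e^{\frac{-2f_{\max}(p,r)}{n-1}}r \le s_p(r,\theta)\le e^{\frac{-2f_{\min}(p,r)}{n-1}}r$, and the fact that $\sn_K^{n-1}$ is increasing on $[0,\tfrac{\pi}{2\sqrt K}]$ (for $K\le 0$ it is increasing on all of $[0,\infty)$). The monotonicity hypothesis in the $K>0$ case is precisely the condition that keeps every argument of $\sn_K$ below $\tfrac{\pi}{2\sqrt K}$, so that replacing $s_p$ by its linear bounds preserves the inequality for $\sn_K^{n-1}$.

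I would treat the absolute bound first, since it is clean. Starting from $\mathrm{Vol}_f(B(p,r))\le e^{f(p)}\nu_p(n,K,r)$, I use that $f_{\min}(p,\cdot)$ is non-increasing in its radius, so $s_p(r',\theta)\le e^{\frac{-2f_{\min}(p,r)}{n-1}}r'$ for all $r'\le r$; monotonicity of $\sn_K$ then gives $\sn_K^{n-1}(s_p(r',\theta))\le \sn_K^{n-1}(e^{\frac{-2f_{\min}(p,r)}{n-1}}r')$. The linear change of variables $s=e^{\frac{-2f_{\min}(p,r)}{n-1}}r'$ carries the $r'$-integral into an $s$-integral and produces a constant Jacobian $e^{\frac{2f_{\min}(p,r)}{n-1}}$, yielding $\nu_p(n,K,r)\le e^{\frac{2f_{\min}(p,r)}{n-1}}\,v(n,K,e^{\frac{-2f_{\min}(p,r)}{n-1}}r)$. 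I would note that this Jacobian is genuinely present; in the $K=0$ case it combines with $e^{f(p)}$ to give precisely $\omega_n r^n e^{f(p)-2f_{\min}}$, the statement of Theorem~\ref{VolCompBdd}.

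For the relative estimate I would apply Theorem~\ref{VolComp}(1) and then bound the two occurrences of $\nu_p$ in opposite directions. For the numerator I use the lower bound $s_p(r,\theta)\ge e^{\frac{-2f_{\max}(p,r_a)}{n-1}}r$ on $r\le r_a$ together with monotonicity of $\sn_K$ to obtain $\nu_p(n,K,r_0,r_a)\ge e^{\frac{2f_{\max}(p,r_a)}{n-1}}\,v\!\left(n,K,e^{\frac{-2f_{\max}(p,r_a)}{n-1}}r_0,e^{\frac{-2f_{\max}(p,r_a)}{n-1}}r_a\right)$; for the denominator I use the upper bound with $f_{\min}(p,r_1)$ to get the reverse inequality. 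Dividing, the leftover constant is $e^{\frac{2}{n-1}(f_{\max}(p,r_a)-f_{\min}(p,r_1))}$, which is $\ge 1$ because $B(p,r_a)\subseteq B(p,r_1)$ forces $f_{\max}(p,r_a)\ge f_{\min}(p,r_1)$; discarding it gives exactly the claimed inequality.

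The hard part is the cut-locus bookkeeping in the numerator lower bound. As used in the proof of Theorem~\ref{VolComp}, $\nu_p$ is effectively truncated at the cut distance $\mathrm{cut}(\theta)$, and a lower bound against the untruncated model $v$ can fail along directions whose cut distance is small. I would resolve this by extending $s_p(r,\theta)$ past the cut locus via the same formula $\int_0^r e^{\frac{-2f(\gamma(t))}{n-1}}\,dt$ along the (no longer minimal) radial geodesic; since $\exp_p(t\theta)\in\overline{B}(p,r)$ for $t\le r$, both sandwich bounds in~\eqref{sfbound} persist for this extension. Running the monotone-quotient argument of Theorem~\ref{VolComp} with $\mathcal{A}_f$ extended by zero beyond $\mathrm{cut}(\theta)$ (which keeps $\mathcal{A}_f/\sn_K^{n-1}(s_p)$ non-increasing) then validates the comparison against the untruncated $\nu_p$, for which the numerator lower bound above is legitimate. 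The $K\le 0$ and $K>0$ cases differ only in that the latter requires staying within the increasing range of $\sn_K$, which the stated hypotheses supply.
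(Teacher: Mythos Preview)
Your proposal is correct and follows the same route as the paper: bound $\nu_p$ above and below via the sandwich estimate~\eqref{sfbound} and the monotonicity of $\sn_K$, then feed the result into Theorem~\ref{VolComp}(1). You are actually more careful than the paper on two points. First, you track the Jacobian $e^{\frac{2f_{\max}(p,r_a)}{n-1}}$ (resp.\ $e^{\frac{2f_{\min}(p,r_1)}{n-1}}$) coming from the linear change of variables; the paper's chain of ``$\geq$'' silently drops these, which is not individually justified without a sign assumption on $f$, whereas your observation that their ratio is $\ge 1$ (since $f_{\max}(p,r_a)\ge f_{\min}(p,r_1)$) is the correct way to dispose of them. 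Second, you address the cut-locus truncation by extending $s_p$ along the radial geodesic and extending $\mathcal{A}_f$ by zero, which the paper leaves implicit.
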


\begin{proof}
When $K \leq 0$ observe that $\sn_K$ is a monotone increasing function.  Moreover, when $K>0$, the assumption that   $s_p(r, \theta) \leq e^{\frac{-2f_{min}(p,r_a)}{n-1}} r_a \leq \frac{\pi}{2\sqrt{K}}$ also implies that $\sn_K(s_p(r, \theta))$ is increasing in $r$ on $B(p, r_a)$.  Then we have 
\begin{align*}
  \nu_p(n, K, r_0, r_a) &= \int_{r_0}^{r_a} \int_{S^{n-1}} \sn_K^{n-1}\left( s_p(r, \theta) \right) dr d\theta \\
  &\geq \omega_n \int_{r_0}^{r_a}  \sn_K^{n-1}\left( e^{\frac{-2f_{\max}(p,r_a)}{n-1}} r  \right) dr\\
  &\geq \omega_n \int_{e^{\frac{-2f_{\max}(p,r_a)}{n-1}} r_0}^{e^{\frac{-2f_{\max}(p,r_a)}{n-1}} r_a}  \sn_K^{n-1}\left(u  \right) du\\
  &\geq v(n, K, e^{\frac{-2f_{\max}(p,r_a)}{n-1}} r_0,  e^{\frac{-2f_{\max}(p,r_a)}{n-1}}r_a),
\end{align*} 
where $\omega_n$ denotes the volume of the $n-1$ dimensional sphere of radius $1$ and $u$ is the linear change of variables $u = e^{\frac{-2f_{\max}(p,r_a)}{n-1}} r$. 
Similarly we obtain 
\[
  \nu_p(n,K, r_b, r_1) \leq v(n, K, e^{\frac{-2f_{\min}(p,r_1)}{n-1}}r_b,  e^{\frac{-2f_{\min}(p,r_1)}{n-1}}r_1)
\]
and
\[
  \nu_p(n, K, r) \leq v(n, K,  e^{\frac{-2f_{\min}(p,r)}{n-1}} r).
\]
Then the result follows from (1) of Theorem~\ref{VolComp}. 
\end{proof}

\begin{remark}
This statement has a particularly nice form  in the case where $K=0$ as then the comparison volumes are easy to compute, for example we have
\[
  v\left(n, 0,  e^{\frac{-2f_{\min}(p,r)}{n-1}} r\right) = \omega_n e^{-2f_{\min}(p,r)} r^n, 
\]
where $\omega_n$ is the volume of a unit ball in $\mathbb{R}^n$.  This gives us Theorem~\ref{VolCompBdd}.  
\end{remark}

\begin{remark}
Using Corollary~\ref{Cor:VolCompBdd}, one can use the same arguments as in, for example, Theorem 1.3 of \cite{WeiWylie} to show that   if $\mathrm{Ric}_f^1\geq 0$ and  $f$ admits a two sided bound then $\mathrm{Vol}_f(B(p,r))$ must grow at least linearly in $r$.  This extends Theorem 1.3 of \cite{WeiWylie}  from $\mathrm{Ric}_f^{\infty} \geq 0$ to $\mathrm{Ric}_f^1 \geq 0$. 
\end{remark}

\begin{remark}
Our volume comparison also applies to $\mathrm{Ric}^{1}_f \geq (n-1) \lambda$ when a universal two sided bound on $f$ is assumed, as we always have $\mathrm{Ric}^{1}_f \geq (n-1)Ke^{\frac{-4f}{n-1}}$. When $\lambda >0$, we let $K = \lambda e^{\frac{4f_{\min}}{n-1}}$ and, when $\lambda<0$, we let $K = \lambda e^{\frac{4f_{\max}}{n-1}}$.   
\end{remark}

We can also specialize the Laplacian comparison to the case where $f$ admits a two sided bound.  

\begin{corollary} 
Suppose that $(M,g)$ is complete which supports a function $f$ such that $\mathrm{Ric}_{f}^1\geq (n-1) K e^{\frac{-4f}{n-1}}g$,  then for any points $p,x \in M$. 
\[
  (\Delta_f r_p)(x) \leq e^{\frac{-2f_{\min}(p,r)}{n-1}} m_K\left( e^{\frac{-2f_{\max}(p,r)}{n-1}} r \right)
\]
where, when $K>0$ we assume $e^{\frac{-2f_{\max}(p,r)}{n-1}} r \leq \frac{\pi}{2\sqrt{K}}$.
\end{corollary}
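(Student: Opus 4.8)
The plan is to read this off directly from the Laplacian comparison Theorem~\ref{LapComp} by feeding in the two-sided comparison \eqref{sfbound} between $s_p$ and the Riemannian distance $r$, and then exploiting that $m_K$ is decreasing. Abbreviate $r = r_p(x)$. Theorem~\ref{LapComp} gives, interpreted in the weak (barrier) sense at the non-smooth points exactly as there,
\[
  (\Delta_f r_p)(x) \leq e^{\frac{-2f(x)}{n-1}}\, m_K\bigl(s_p(x)\bigr),
\]
and I would bound the two factors on the right separately. Since $x$ lies in the closed ball $\overline{B(p,r)}$, we have $f(x) \geq f_{\min}(p,r)$, so $e^{\frac{-2f(x)}{n-1}} \leq e^{\frac{-2f_{\min}(p,r)}{n-1}}$, which is exactly the prefactor in the claim.

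For the second factor I would invoke the lower estimate in \eqref{sfbound}, namely $s_p(x) = s_p(r,\theta) \geq e^{\frac{-2f_{\max}(p,r)}{n-1}}\,r$, together with the fact (already used in the proof of Theorem~\ref{LapComp}) that $m_K$ is a decreasing function of $s$ on its interval of definition, which is $(0,\infty)$ for $K\le 0$ and $(0,\pi/\sqrt{K})$ for $K>0$ (where $s_p(x)$ stays by the Myers bound of Theorem~\ref{Myers}). This gives
\[
  m_K\bigl(s_p(x)\bigr) \leq m_K\bigl(e^{\frac{-2f_{\max}(p,r)}{n-1}}\,r\bigr).
\]
When $K \leq 0$ the function $m_K$ is strictly positive, so multiplying the two one-sided bounds immediately produces the asserted inequality, with no extra hypothesis needed.

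The only delicate point, and the whole reason for assuming $e^{\frac{-2f_{\max}(p,r)}{n-1}}\,r \leq \frac{\pi}{2\sqrt{K}}$ when $K>0$, is the sign of $m_K$, since then $m_K(s) = (n-1)\sqrt{K}\cot(\sqrt{K}s)$ becomes negative for $s > \frac{\pi}{2\sqrt{K}}$, so one cannot naively multiply two upper bounds. Set $A = e^{\frac{-2f(x)}{n-1}}$, $A' = e^{\frac{-2f_{\min}(p,r)}{n-1}}$, $B = m_K(s_p(x))$ and $B' = m_K(e^{\frac{-2f_{\max}(p,r)}{n-1}}r)$, so that $0 < A \leq A'$ and $B \leq B'$; the goal is $AB \leq A'B'$. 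If $B \geq 0$ this follows from $AB \leq A'B \leq A'B'$. If $B<0$ then $AB<0$, while the stated hypothesis forces $e^{\frac{-2f_{\max}(p,r)}{n-1}}r \leq \frac{\pi}{2\sqrt{K}}$, hence $B' \geq 0$ and $A'B' \geq 0 > AB$; either way $AB \leq A'B'$. Thus the main obstacle is not a computation but keeping track of this sign issue and recognizing that the hypothesis is there precisely to guarantee $B' \geq 0$; once that is noted, the corollary is a one-line consequence of Theorem~\ref{LapComp} and \eqref{sfbound}.
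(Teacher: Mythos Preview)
Your argument is correct and follows the same route as the paper: start from Theorem~\ref{LapComp}, bound the exponential prefactor using $f(x)\ge f_{\min}(p,r)$, bound the $m_K$ factor using the lower estimate in \eqref{sfbound} together with monotonicity of $m_K$, and then combine the two. Your treatment of the sign issue when $K>0$ (the $A,A',B,B'$ case split) is in fact more careful than the paper's terse version, which simply notes that $m_K$ is nonnegative on $(0,\pi/(2\sqrt{K}))$; but the substance is identical.
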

 
\begin{proof}
Since functions $m_K$ are monotone decreasing, from \eqref{sfbound} we have $m_K(s(p,r)) \leq e^{\frac{-2f_{\max}(p,r)}{n-1}} r$. When $K \leq 0$,  $m_K$ is a positive function, while when $K<0$ it is non-negative on the interval $(0, \frac{\pi}{2 \sqrt{K}})$ so  the result follows from Theorem~\ref{LapComp} and again applying \eqref{sfbound}.
\end{proof}
 
When $K=0$ this gives us that 
\[
(\Delta_f r_p)(x)  \leq \frac{(n-1) e^{\frac{2(f_{\max}-f_{\min})(p,r)}{n-1}} }{r}.
\]
Compare this to the formula in \cite{WeiWylie} that states that,  under the stronger bound $\mathrm{Ric}^{\infty}_f \geq 0$,   
\[
(\Delta_f r_p)(x)  \leq \frac{n-1 + 2(f_{\max}-f_{\min})(p,r)}{r}.
\]
 
The results in this section thus show that there are uniform bounds on the $f$-volume and the $f$-Laplacian of the distance function that depend on bounds of the function $f$.  One can the use these estimates to generalize all of the results in \cite{WeiWylie} to the lower bounds on $\mathrm{Ric}_f^1$ with $|f|\leq k$. 
 
\subsection{Applications without assuming bounded $f$}
 
We now turn our attention to applications of the volume and Laplacian comparison theorems where we do not assume a priori bounds on $f$.  These results will necessarily depend on the re-parametrized distance function $s$.  Our first result is to prove Theorem~\ref{Myers}.

\begin{reptheorem}{Myers}
Suppose that $(M,g)$ is complete which supports a function $f$ such that $\mathrm{Ric}_f^1 \geq (n-1) K e^{\frac{-4f}{n-1}}g$, $K>0$ then $s(p,q) \leq \frac{\pi}{\sqrt{K}}$ for all $p,q \in M$.
\end{reptheorem}

\begin{proof}
Suppose there are points $p$ and $q$ such that $s(p,q) > \frac{\pi}{\sqrt{K}}$.  Then since the set of cut points to $p$ is closed and measure zero, by possibly changing $q$ slightly, we can assume that  $q$ is not a cut point to $p$.  From Lemma~\ref{MeanCurvComp}, along the minimal geodesic from $p$ to $q$, $\lambda \leq m_K(s)$. However, $K>0$, so, as $s \rightarrow \frac{\pi}{\sqrt{K}}$, $m_K(s)\rightarrow \-\infty$.  Since $f$ is a smooth function this implies $\Delta r \rightarrow - \infty$,  but this contradicts that fact the $r_p$ is smooth in a neighborhood of $q$.  
\end{proof}

Note that combining this with  Proposition~\ref{Prop:scomplete}  gives us Corollary~\ref{CompMyers} when the space is $\alpha$-complete.

The function $s$ is also naturally related to a conformal change of metric. Let $h = e^{\frac{-4f}{n-1}} g$ then $s(p,q)$ is the smallest length in the $h$ metric of a minimal geodesic between $p$ and $q$ in the $g$ metric. As such, $s(p,q) \geq d^{h}(p,q)$. So Theorem~\ref{Myers} tells us that the diameter of the metric $h$ is less than or equal to $\frac{\pi}{\sqrt{K}}$. 

Combining this with the volume comparison, gives us Theorem~\ref{ThmFiniteVolume}.  

\begin{reptheorem}{ThmFiniteVolume}
Suppose that $(M,g)$ is complete  and supports a function $f$ such that  $\mathrm{Ric}_f^1 \geq (n-1)K e^{\frac{-4f}{n-1}}g$, $K>0$ then $\mu(M)$ is finite and $\pi_1(M)$ is finite. 
\end{reptheorem}

\begin{proof}
We have that $C\left(p, \frac{\pi}{\sqrt{K}}\right)= M$ so by part (2) of Theorem~\ref{VolComp},
\[
\mu(M) \leq e^{-\frac{n+1}{n-1}f(p)}v\left(n,K, \frac{\pi}{\sqrt{k}}\right) < \infty.
\]
To see that $\pi_1(M)$ is finite, let $\widetilde{M}$ be the universal cover of $M$ with covering metric $\widetilde{g}$ and let $\widetilde{f}$ be the pullback of $f$ to the universal cover. Then we clearly also have $\mathrm{Ric}_{\widetilde{f}}^1 \geq (n-1)K e^{\frac{-4\widetilde{f}}{n-1}}\widetilde{g}$ on the universal cover so $\mu(\widetilde{M}) < \infty$. But, since the covering metric and $\widetilde{f}$ are invariant under the deck transformations of $\widetilde{M}$, so is the measure $\mu$, which implies  that $\pi_1(M) < \infty$. 
\end{proof}

\begin{example} \label{Ex:K>0}
Simple examples show that Theorem~\ref{Myers} and~\ref{ThmFiniteVolume} are not true for the assumption $\mathrm{Ric}_f^1 \geq Kg$. In fact,  let $(N,g_N)$ be any Riemannian  metric with bounded curvature and consider the metric  $g = dr^2 + e^{2r}g_N$ on $\mathbb{R} \times N$.  Then the Ricci tensor of $g$ is given by 
\begin{align*}
  \mathrm{Ric}\left( \frac{\partial}{\partial r} , \frac{\partial}{\partial r} \right) & = -(n-1) \\
  \mathrm{Ric}\left( \frac{\partial}{\partial r} , U \right) &= 0 \\
  \mathrm{Ric}\left( V , U \right) &=  -(n-1) +  e^{2r} \mathrm{Ric}^{g_N}(U,V),
\end{align*}
where $U, V$ are fields on $N$.  Consider the function $f= Ar$ for some constant $A$, then 
\begin{align*}
  \left( \Hess f + \frac{df \otimes df}{n-1} \right)\left( \frac{\partial}{\partial r} , \frac{\partial}{\partial r} \right) & = \frac{A^2}{n-1} \\
  \left( \Hess f + \frac{df \otimes df}{n-1} \right)\left( \frac{\partial}{\partial r} , U \right) & = 0 \\
  \left( \Hess f + \frac{df \otimes df}{n-1} \right)\left( U,V\right) & = A e^{2r} g_N(U,V).  \\
\end{align*}
Thus we can see that taking $A>0$ to be sufficiently large will give  $(M,g,f)$ with $\mathrm{Ric}_f^1 \geq Kg$ on $N \times \mathbb{R}$.  Moreover, the geodesic obtained by letting $r \rightarrow -\infty$ will have $s \rightarrow \infty$.
\end{example}

\subsection{Rigidity}
Now we discuss rigidity in the comparison estimates in the previous section. The basic idea is to understand equality in the estimates in section 4.1.  In this section we will let $\dot{}$ denote the derivative with respect to $r$, which is a unit speed parametrization of a minimal geodesic. 

\begin{lemma} \label{RigidLemma}
Suppose $(M,g, f)$ is a manifold with density and $\gamma$ is a $g$-geodesic such that $\mathrm{Ric}_f^1(\dot{\gamma}, \dot{\gamma}) \geq (n-1)K e^{\frac{-4f}{n-1}}$.  Moreover, suppose that for some $r_0$, $ \lambda(\gamma(r_0)) = m_K(s(r_0)), $
then 
\begin{enumerate}
\item $\mathrm{Ric}_f^1(\dot{\gamma}(r), \dot{\gamma}(r)) = (n-1)K e^{\frac{-4(f \circ \gamma)(r)}{n-1}}$ and $\lambda(\gamma(r)) = m_K(s(r))$ for all $0 < r \leq r_0$, 

\item $g_{\gamma(r)}= dr^2 + e^{\frac{2(f \circ \gamma(r))}{n-1}} \mathrm{sn}^2_K(s(r)) g_{S^{n-1}}$ for all $0 < r \leq r_0$. 
\end{enumerate}
\end{lemma}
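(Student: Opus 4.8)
The plan is to extract rigidity from the two places where inequalities were used in deriving the comparison estimate of Lemma~\ref{MeanCurvComp}: the Riccati comparison (via the quantity $\beta$) and the Cauchy-Schwarz step inside Lemma~\ref{Thm:Bochner}. For part (1), I would revisit the monotonicity argument in Lemma~\ref{MeanCurvComp}. There we showed $\beta(s) = \sn^2_K(s)(\lambda - m_K(s))$ satisfies $\beta'(s) \leq 0$ with $\beta(s)\leq 0$ and $\beta(0)=0$. The hypothesis $\lambda(\gamma(r_0)) = m_K(s(r_0))$ says $\beta(s(r_0)) = 0$. Since $\beta$ is non-positive, non-increasing, and vanishes at both $s=0$ and $s=s(r_0)$, it must vanish identically on $[0, s(r_0)]$, hence $\lambda(\gamma(r)) = m_K(s(r))$ for all $0 < r \leq r_0$. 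Feeding $\beta \equiv 0$ back into the computation of $\beta'$ forces the term $-\frac{\sn^2_K(s)}{n-1}(m_K(s)-\lambda)^2$ to account for all the slack, which in turn forces equality throughout the chain of inequalities in Lemma~\ref{Thm:Bochner}; this yields both the curvature equality $\mathrm{Ric}_f^1(\dot\gamma,\dot\gamma) = (n-1)K e^{-4f/(n-1)}$ and the Cauchy-Schwarz equality, so that $\Hess r$ has a single nonzero eigenvalue of multiplicity $n-1$ at every point of $\gamma$ on $(0,r_0]$.

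For part (2), the key consequence from part (1) is that $\Hess r = \rho(r)\,\Pi$ where $\Pi$ projects onto the orthogonal complement of $\dot\gamma$ and $\rho$ is a scalar function. Translating back from the reparametrized quantity $\lambda$, the relation $\lambda = e^{2f/(n-1)}\Delta_f r = m_K(s)$ together with the single-eigenvalue structure pins down $\Delta r$ and hence $\rho = \Delta r/(n-1)$ explicitly. First I would set up the metric in geodesic polar coordinates along $\gamma$ as $g = dr^2 + g_r$ on the normal sphere directions, and recall the standard evolution equation $\frac{d}{dr}\Hess r + (\Hess r)^2 = -R(\cdot,\dot\gamma,\dot\gamma,\cdot)$. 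The proportionality $\Hess r = \rho\,\Pi$ reduces this to a scalar Riccati equation for $\rho$, and solving it with the known value of $\Delta r$ determines $\rho(r)$. Writing $g_r = \phi^2(r)\,g_{S^{n-1}}$ for the conformal factor of the spherical part, the identity $\frac{d}{dr}\log\phi = \rho$ then integrates to give $\phi(r) = e^{f(\gamma(r))/(n-1)}\sn_K(s(r))$ after matching the $r\to 0$ boundary asymptotics, which reproduces the asserted warped form $g = dr^2 + e^{2f/(n-1)}\sn^2_K(s)\,g_{S^{n-1}}$.

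The main obstacle I anticipate is part (2): going from the pointwise eigenvalue information about $\Hess r$ to the global metric form requires showing that the spherical part is genuinely round (i.e. that $g_r$ is a constant multiple of the round $g_{S^{n-1}}$ rather than just conformal to it in a loose sense), and that the conformal factor is exactly $e^{f/(n-1)}\sn_K(s)$. The verification that the correct antiderivative is $e^{f/(n-1)}\sn_K(s)$ hinges on carefully tracking the relation between $s$ and $r$, namely $ds = e^{-2f/(n-1)}dr$, so that $\frac{d}{dr} = e^{-2f/(n-1)}\frac{d}{ds}$, and then confirming that $\frac{d}{dr}\log\!\left(e^{f/(n-1)}\sn_K(s)\right) = \rho(r)$ reproduces $\Hess r$. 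I would also need to confirm that the isotropy forced by the single-eigenvalue condition propagates along all directions $\theta$, which follows because the hypothesis applies to every minimal geodesic from $p$, so the argument is uniform over the sphere of directions; I expect this to be the delicate bookkeeping step rather than a conceptual difficulty.
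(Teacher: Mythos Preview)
Your proposal is correct and follows essentially the same route as the paper: for (1), both you and the paper use the monotonicity of $\beta(s)=\sn_K^2(s)(\lambda-m_K(s))$ together with $\beta(0)=\beta(s(r_0))=0$ to force $\beta\equiv 0$, and then backtrack through the chain of inequalities in Lemma~\ref{Thm:Bochner} to obtain the curvature equality and the single-eigenvalue structure of $\Hess r$; for (2), both arguments integrate the first-order evolution of the spherical part of the metric (the paper via $L_{\partial_r}g = 2\Hess r$ in polar coordinates, you via $\frac{d}{dr}\log\phi = \rho$) and fix the constant from the $r\to 0$ asymptotics.

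Two small remarks. First, the detour through the operator Riccati equation $\frac{d}{dr}\Hess r + (\Hess r)^2 = -R(\cdot,\dot\gamma,\dot\gamma,\cdot)$ is unnecessary: once you know $\Hess r = \rho\,\Pi$, the relation $\lambda = e^{2f/(n-1)}\bigl((n-1)\rho - \partial_r f\bigr) = m_K(s)$ already pins down $\rho$ algebraically, exactly as the paper does when it writes $A(r) = e^{-2f/(n-1)}\frac{\sn_K'}{\sn_K}(s) + \frac{1}{n-1}g(\nabla f,\nabla r)$. Second, your anticipated obstacle about uniformity over all directions $\theta$ is not actually needed for the lemma as stated: the paper, like you, works along the single geodesic $\gamma$ and derives the evolution of the metric components there; the passage to a genuine neighborhood-level statement happens only later in the applications (Theorems~\ref{VolRigid1}, \ref{VolRigid2}, \ref{DiamRigid}), where the hypothesis does hold for every geodesic from $p$.
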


\begin{remark}
Conversely, for any function $f(r, \theta)$ defined on $(a,b) \times S^{n-1}$, if we define $g = dr^2 + e^{\frac{2f}{n-1}} \mathrm{sn}^2_K(s) g_{S^{n-1}}$ where $s(t, \theta) = \int_0^r e^{\frac{-2f(t, \theta)}{n-1}}dt$, then we will have 
 $\mathrm{Ric}_f^1(\frac{\partial}{\partial r},\frac{\partial}{\partial r}) = (n-1)K e^{\frac{-4(f \circ \gamma)(r)}{n-1}}$. 
\end{remark}
 
\begin{proof}
(1) clearly follows from Lemma~\ref{Thm:Bochner} since equality at $r_0$ implies that the derivatives are equal for all $r<r_0$. Then from Lemma~\ref{Thm:Bochner} we also have $\Hess r = A(r )g_r$ for all $0 < r \leq r_0$. Then we have 
\[
\lambda = ((n-1)A(r) - g(\nabla f, \nabla r))e^{\frac{2(f \circ \gamma)(r)}{n-1} }= m_K(s(r)).
\]
So we have $A(r) = e^{\frac{-2(f \circ \gamma)(r)}{n-1}}\frac{\mathrm{sn}_K'}{\mathrm{sn}_K} (s(r)) + \frac{g(\nabla f, \nabla r)}{n-1}$. 

Let $\{ \frac{\partial}{\partial r}, \frac{\partial}{\partial \theta^i}\}_{i=1}^{n-1}$ be a geodesic polar coordinates around $p$. Then the standard equation $L_{\frac{\partial}{\partial r}} g = 2 \Hess r$, in these coordinates,  becomes  
\[
  \frac{\partial}{\partial r} g_{ij} = 2 (S^k_i) g_{kj},
\]
where $S^k_i$ are the components of the matrix representing $\Hess r$. In our case $S^k_i = A(r) \delta_i^k$ so we have that $g_{ik} = 0$ for $i \neq k$ and 
\[
  \frac{\partial}{\partial r} g_{ii} = 2 (A) g_{ii} = \left( \frac{2}{n-1} \frac{\partial f}{\partial r}+ 2e^{\frac{-2f}{n-1}}\frac{\mathrm{sn}_K'}{\mathrm{sn}_K} (s) \right) g_{ii}.
\]
Which implies that 
\[
  \frac{\partial}{\partial r} \left( \log(g_{ii}) - \frac{2f}{n-1} \right) = 2e^{\frac{-2f}{n-1}} \frac{\mathrm{sn}_K'}{\mathrm{sn}_K} (s).
\]
In terms of the parameter $\frac{d}{ds} = e^{\frac{2f}{n-1}}  \frac{\partial}{\partial r}$ this gives us
\[
\frac{d}{d s} \left( \log(g_{ii}) - \frac{2f}{n-1} \right) =2 \frac{\mathrm{sn}_K'}{\mathrm{sn}_K} (s).
\]
Integrating with respect to $s$ gives 
\begin{align*}
  \frac{ e^{ - \frac{2f(s)}{n-1}} g_{ii}(s) }{e^{ - \frac{2f(s^{-1} (\varepsilon))}{n-1}} g_{ii}(s^{-1} (\varepsilon))}&= \frac{\mathrm{sn}^2_K(s)}{\mathrm{sn}^2_K(\varepsilon)} \\
  \frac{e^{ - \frac{2f(s)}{n-1}} g_{ii}(s)}{ \mathrm{sn}^2_K(s)} &= \lim_{\varepsilon \rightarrow 0} \frac{e^{ - \frac{2f(s^{-1} (\varepsilon))}{n-1}} g_{ii}(s^{-1} (\varepsilon))}{ \mathrm{sn}^2_K(\varepsilon)} = 1.
\end{align*}
\end{proof}

There is also more rigidity given when the function $s(p,q)$ is equal to the distance in the conformal metric $h$. 

\begin{proposition}
Let $p$ and $q$ be points on $(M,g,f)$, such that $s(p,q) = d^{h}(p,q)$ and let $\gamma(r)$ be the minimal $g$-geodesic from $p$ to $q$ such that $s(p,q) = \int_0^r e^{\frac{-2(f\circ \gamma)(t)}{n-1}} dt$, then $\nabla f$ is parallel to $\frac{d\gamma}{dr}$. 
\end{proposition}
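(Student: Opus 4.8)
The plan is to read the hypothesis $s(p,q) = d^h(p,q)$ as a statement about the conformal metric $h = e^{\frac{-4f}{n-1}}g$. As observed in the text, for the unit-speed $g$-geodesic $\gamma$ realizing $s(p,q)$ we have $h(\dot\gamma,\dot\gamma) = e^{\frac{-4f}{n-1}}g(\dot\gamma,\dot\gamma) = e^{\frac{-4f}{n-1}}$, so the $h$-length of $\gamma$ is $\int e^{\frac{-2f}{n-1}}\,dr = s(p,q)$. Since $d^h(p,q)$ is the infimum of $h$-length over all paths, the equality $s(p,q) = d^h(p,q)$ says exactly that $\gamma$ is a length-minimizing path for the smooth metric $h$. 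In particular $\gamma$ is an (unparametrized) $h$-geodesic, and so $\gamma$ is simultaneously a $g$-geodesic and an $h$-geodesic. The proposition thus becomes a statement about a curve that is geodesic for two conformally related metrics.

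Next I would set $\sigma = -\tfrac{2f}{n-1}$, so that $h = e^{2\sigma}g$, and invoke the standard transformation law for the Levi-Civita connection under a conformal change: writing $\widetilde\nabla$ for the connection of $h$ and $\nabla\sigma$ for the $g$-gradient of $\sigma$,
\[
  \widetilde\nabla_X Y = \nabla_X Y + d\sigma(X)\,Y + d\sigma(Y)\,X - g(X,Y)\,\nabla\sigma.
\]
Applying this with $X = Y = T := \dot\gamma$ and using that $\gamma$ is a unit-speed $g$-geodesic, so $\nabla_T T = 0$ and $g(T,T) = 1$, yields $\widetilde\nabla_T T = 2\,d\sigma(T)\,T - \nabla\sigma$.

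I would then use that $\gamma$, being an unparametrized $h$-geodesic, satisfies the pregeodesic equation $\widetilde\nabla_T T = \mu\,T$ for some function $\mu$ along $\gamma$ (the acceleration of a reparametrized geodesic is proportional to its velocity). Combining the two expressions gives $\nabla\sigma = \bigl(2\,d\sigma(T) - \mu\bigr)\,T$ along $\gamma$, so $\nabla\sigma$ is everywhere parallel to $T = \dot\gamma$. Since $\nabla\sigma = -\tfrac{2}{n-1}\nabla f$, this is precisely the assertion that $\nabla f$ is parallel to $\frac{d\gamma}{dr}$, which is the conclusion.

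The one point requiring care, and the main (minor) obstacle, is the transfer of the \emph{minimizing} property into the geodesic equation for $h$. Because $h$ is a genuine smooth Riemannian metric (the factor $e^{2\sigma}$ is smooth and positive), any $h$-length minimizer between two points is, after reparametrization by $h$-arclength, a smooth $h$-geodesic; and by the choice of $\gamma$ in the hypothesis together with $s(p,q) = d^h(p,q)$, the curve $\gamma$ is exactly such a minimizer while being already smooth as a $g$-geodesic. Once this is noted, the remainder is the routine conformal computation above.
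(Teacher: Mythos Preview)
Your argument is correct and is essentially the same as the paper's proof: both observe that the equality $s(p,q)=d^h(p,q)$ forces $\gamma$ to be an $h$-geodesic, and then apply the conformal change formula for the Levi-Civita connection to conclude that $\nabla f$ is tangent to $\gamma$. The only cosmetic difference is that the paper reparametrizes $\gamma$ by the $s$-parameter, which is exactly $h$-arclength, so that $\nabla^h_{d\gamma/ds}(d\gamma/ds)=0$ on the nose and the function $\mu$ never appears; your version keeps the unit $g$-speed parameter $r$ and invokes the pregeodesic equation $\widetilde\nabla_T T=\mu T$ instead, arriving at the same conclusion.
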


\begin{proof}
If $s(p,q) = d^h(p,q) = \mathrm{length}^h(\gamma)$, then $\gamma$ is also a minimal geodesic in the $h$ metric. In particular, $\nabla^h_{\frac{d\gamma}{d s} }\frac{d\gamma}{d s} = 0$.  Applying the  formula for connection of $h$ in terms of $g$ gives us
\begin{align*}
0 &= \nabla^h_{\frac{d\gamma}{ds} }\frac{d \gamma}{d s} \\
&= \nabla^g_{\frac{d\gamma}{ds} }\frac{d \gamma}{d s} - \frac{4}{n-1} g\left(\frac{d\gamma}{ds}, \nabla f\right) \frac{d\gamma}{ds}+\frac{ 2}{n-1} g\left(\frac{d\gamma}{ds}, \frac{d\gamma}{ds}\right) \nabla f \\
&= \frac{2e^{\frac{4f}{n-1}}}{n-1} \left( - g\left(\frac{d\gamma}{dr}, \nabla f\right) \frac{d\gamma}{dr}+ \nabla f \right).
\end{align*}
Which is true if and only if $\nabla f$ is parallel to $\frac{d\gamma}{dr}$.
\end{proof}

Using these lemmas, we can now obtain a rigidity result for Theorem~\ref{Myers}. Here we do encounter the problem that $s$ does not satisfy the triangle inequality. To get around this issue we instead consider rigidity in the the diameter estimate for the conformal metric $h = e^{\frac{-4f}{n-1}} g$. From $s(p,q) \geq d^{h}(p,q)$ and the triangle inequality for the $h$-metric we have
\[
s(p,x) + s(q,x) \geq d^h(p,x) + d^h(q, x) \geq d^{h}(p,q).
\]

\begin{theorem}
Suppose that $(M,g,f)$ is a complete manifold with density and satisfies $\mathrm{Ric}_f^1 \geq (n-1) K e^{\frac{-4f}{n-1}}g$, $K>0$. Then there are points $p$ and $q$ such that $d^h(p,q) = \frac{ \pi}{\sqrt{K}}$ if and only if $g$ is a rotationally symmetric metric on the sphere of the form 
\[
g = dr^2 + \frac{ e^{\frac{2(f(r))}{n-1}} \sin^2(\sqrt{K}s(r)) }{K}g_{S^{n-1}}, \qquad 0 \leq r \leq D
\]
where $f$ is a function of $r$ such that $\int_0^D e^{\frac{-2f(t)}{n-1}} = \frac{\pi}{\sqrt{K}}.$ 
\end{theorem}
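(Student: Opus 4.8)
The plan is to prove both implications, with the forward direction (maximal $h$-diameter $\Rightarrow$ rigidity) carrying all the content. First I would observe that combining $s(p,q)\ge d^h(p,q)$ with the Myers bound $s(p,q)\le\frac{\pi}{\sqrt K}$ from Theorem~\ref{Myers} forces $s(p,q)=d^h(p,q)=\frac{\pi}{\sqrt K}$. The same sandwiching applied to an interior point $x=\gamma(r_0)$ of a minimal $g$-geodesic $\gamma$ from $p$ to $q$, using $s_p(x)+s_q(x)\ge d^h(p,x)+d^h(x,q)\ge d^h(p,q)$, gives the additivity $s_p(x)+s_q(x)=\frac{\pi}{\sqrt K}$ together with $s_p(x)=\int_0^{r_0}e^{\frac{-2f}{n-1}}dt$ and $s_q(x)=\int_{r_0}^{D}e^{\frac{-2f}{n-1}}dt$, where $D=d(p,q)$. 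In particular $\gamma$ is $h$-minimizing, so the preceding proposition shows $\nabla f$ is tangent to $\gamma$ along its interior.

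The heart of the argument is a weighted Cheng-type computation at interior points, where $r_p$ and $r_q$ are smooth. Since $r_p+r_q\ge D$ with equality on $\gamma$, each interior $x_0$ is a smooth interior minimum of $r_p+r_q$, so $\Delta r_p+\Delta r_q\ge 0$ there. Because $\nabla r_p=-\nabla r_q$ at $x_0$, the drift terms cancel and $\Delta_f r_p+\Delta_f r_q=\Delta r_p+\Delta r_q\ge 0$. On the other hand the identity $m_K\!\left(\frac{\pi}{\sqrt K}-t\right)=-m_K(t)$ together with $s_p(x_0)+s_q(x_0)=\frac{\pi}{\sqrt K}$ gives $m_K(s_p)+m_K(s_q)=0$, so Theorem~\ref{LapComp} yields $0\le \Delta_f r_p+\Delta_f r_q\le e^{\frac{-2f}{n-1}}(m_K(s_p)+m_K(s_q))=0$. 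Hence each comparison is an equality, i.e. $\lambda=m_K(s)$ along $\gamma$, and Lemma~\ref{RigidLemma} produces the warped structure $g=dr^2+e^{\frac{2f}{n-1}}\sn_K^2(s)g_{S^{n-1}}$ in a conical neighborhood of $\gamma$.

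To pass from a single geodesic to the whole manifold I would show that the set $U\subset S^{n-1}\subset T_pM$ of directions whose geodesic minimizes all the way to $q$ is open and closed. Closedness is immediate from compactness of the set of minimal geodesics. For openness, note that along each $v\in U$ the warping factor $e^{\frac{f}{n-1}}\sn_K(s)$ vanishes exactly when $s=\frac{\pi}{\sqrt K}$, that is at $r=D$, so the conical neighborhood from Lemma~\ref{RigidLemma} pinches to the single smooth point $q$ and nearby geodesics also reach $q$ minimizing. Since $S^{n-1}$ is connected for $n>1$, we get $U=S^{n-1}$, so every geodesic from $p$ is minimal to $q$, equality holds in every radial direction, and the warped structure is global. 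Applying the preceding proposition in every direction forces $\nabla f$ to be radial, hence $f=f(r)$; the warping vanishes to first order at $r=0$ and at $r=D$, so $M$ is a rotationally symmetric sphere of the stated form with $\int_0^D e^{\frac{-2f}{n-1}}dt=s(D)=\frac{\pi}{\sqrt K}$.

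The converse is the easy direction: for a metric of the stated form the two poles $p=\{r=0\}$ and $q=\{r=D\}$ satisfy $d^h(p,q)\le \int_0^D e^{\frac{-2f}{n-1}}dr=\frac{\pi}{\sqrt K}$ along the radial geodesic, while Theorem~\ref{Myers} supplies the reverse inequality, so $d^h(p,q)=\frac{\pi}{\sqrt K}$. I expect the main obstacle to be the globalization step: carefully justifying that the local warped splitting of Lemma~\ref{RigidLemma} is uniform along $\gamma$ up to the focal value $s=\frac{\pi}{\sqrt K}$, and that the pinching of the conical neighborhood forces neighboring geodesics to meet smoothly at $q$, so that $U$ is genuinely open. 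A secondary technical point, already visible in the first paragraph, is that $s$ fails the triangle inequality; this is why every distance estimate must be routed through the conformal distance $d^h$ rather than through $s$ directly.
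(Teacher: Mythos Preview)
Your local argument along a fixed minimal geodesic $\gamma$ is correct, but the globalization via an open--closed argument on $S^{n-1}$ is both harder than necessary and rests on a misreading of Lemma~\ref{RigidLemma}: that lemma only determines the metric \emph{at the points $\gamma(r)$} (expressed in polar coordinates about $p$), not in a full conical neighborhood of $\gamma$. So there is no ready-made ``pinching'' region from which openness of your set $U$ would follow, and your stated main obstacle is a real one.

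The paper sidesteps this entirely by noticing that the inequality you used on $\gamma$,
\[
s_p(x)+s_q(x)\;\ge\; d^h(p,x)+d^h(x,q)\;\ge\; d^h(p,q)=\tfrac{\pi}{\sqrt K},
\]
in fact holds at \emph{every} $x\in M$. Feeding this into Theorem~\ref{LapComp} gives $\Delta_f(r_p+r_q)\le 0$ globally (in the barrier sense where $r_p,r_q$ are not smooth). Since $r_p+r_q\ge d(p,q)$ with equality on $\gamma$, the strong minimum principle forces $r_p+r_q\equiv d(p,q)$ on all of $M$, so every geodesic issuing from $p$ is minimizing and terminates at $q$. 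Equality in the Laplacian comparison then holds along every radial direction; Lemma~\ref{RigidLemma} applies in every direction simultaneously, and the argument that $f=f(r)$ and that $g$ has the stated rotationally symmetric form goes through as you outlined. In short, the one missing idea is to promote your pointwise second-derivative test at $x_0\in\gamma$ to the global superharmonicity estimate and invoke the minimum principle; once you do this, the open--closed step becomes unnecessary.
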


\begin{proof}
Let $r_p$ and $r_q$ be the distance functions to $p$ and $q$ respectively, then for any point $x \in M$, by Theorem~\ref{LapComp},  we have 
\[
 \Delta_f (r_p + r_q)(x) \leq (n-1) \sqrt{K} e^{\frac{2f(x)}{n-1}} \left( \cot(\sqrt{K} s_p(x)) + \cot(\sqrt{K} s_q(x)) \right).
\]
We also have $s_p(x) + s_q(x) \geq d^h(p,q) = \frac{ \pi}{\sqrt{K}}$, so that 
\[
 \cot(\sqrt{K} s_q(x)) \leq \cot( \pi - \sqrt{K} s_p(x)) = - \cot(\sqrt{K} s_p(x)).
\]
Thus,  $\Delta_f (r_p + r_q) \leq 0$. 

On the other hand, by the triangle inequality, $r_p(x) + r_q(x) \geq d(p,q)$, with equality if and only if $x$ is on a minimal geodesic from $p$ to $q$.  Thus $r_p+r_q$ always attains a local minimum.  By the minimum principle, this shows that $r_p(x) + r_q(x) = d(p,q)$ for all $x$ and  all geodesics starting at $p$ in $M$ are minimizing and end at $q$. 

We also have $\Delta_f (r_p + r_q) =0$ everywhere so we have equalities in all of the inequalities above. Firstly this tells us that $s_p(x) + s_q(x) = d^h(p,q) = \frac{ \pi}{\sqrt{K}}$ so that $s_p(x) = d(p,x)$ and thus $\nabla f$ must be parallel to every minimal geodesic emanating from $p$ so that $f$ must be a function of $r$. Secondly we also have equality  as in Lemma~\ref{RigidLemma} along all geodesics starting at $p$ which gives us a metric of the form 
\[
g = dr^2 + \frac{e^{\frac{2(f(r))}{n-1}} \sin^2(\sqrt{K} s(r))}{K} g_{S^{n-1}}, \qquad 0 \leq r \leq D = d(p,q).
\]
Conversely we can see that any such metric will satisfy the hypotheses of the lemma as long as $\int_0^D e^{\frac{-2f(t)}{n-1}} = \frac{\pi}{\sqrt{K}}.$
\end{proof}

For rigidity in Theorem~\ref{ThmFiniteVolume} when $K>0$ we have the following. 

\begin{theorem} \label{VolRigid1}
Suppose that $(M,g)$ is complete which supports a function $f$ such that $\mathrm{Ric}_f^1 \geq K e^{\frac{-4f}{n-1}}g$, $K>0$, and there is a point $p$ such that 
\[
\mu(M) = e^{-\frac{n+1}{n-1} f(p)} v\left(n, K, \frac{\pi}{\sqrt{K}} \right),
\]
then either
\begin{enumerate} 
\item $M$ is diffeomorphic to either $S^n$ of diameter $D$ and the metric is of the form 
\[
  g = dr^2 + e^{\frac{2f}{n-1}}\frac { \sin^2( \sqrt{K} s) } { K} g_{S^{n-1}} \qquad r \in [0, D]
\]

\item or, $M$ is diffeomorphic to $\mathbb{R}^n$ with metric of the form 
\[
  g = dr^2 + e^{\frac{2f}{n-1}}\frac { \sin^2( \sqrt{K} s) } { K} g_{S^{n-1}} \qquad r \in [0, \infty).
\]
\end{enumerate}
\end{theorem}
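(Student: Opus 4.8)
The plan is to run an equality-case analysis of the absolute volume comparison, in the spirit of deducing Cheng-type rigidity from Bishop--Gromov. First I would combine Theorem~\ref{Myers} with part (2) of Theorem~\ref{VolComp}: since $s_p \le \pi/\sqrt{K}$ everywhere, $C(p,\pi/\sqrt{K}) = M$, and the absolute form of that comparison (the corollary following Theorem~\ref{VolComp}) gives $\mu(M) \le e^{-\frac{n+1}{n-1}f(p)} v(n,K,\pi/\sqrt{K})$, which the hypothesis forces to be an equality. Writing $\mu(M) = \int_{S^{n-1}}\int_0^{\mathrm{cut}_s(\theta)} \mathcal{A}_f(s,\theta)\,ds\,d\theta$ in the $(s,\theta)$ coordinates and using the pointwise bound $\mathcal{A}_f(s,\theta) \le C\,\sn_K^{n-1}(s)$ from Lemma~\ref{VolElement} (with $C$ the limiting ratio as $s\to 0$), equality of the integral with the full model $\int_{S^{n-1}}\int_0^{\pi/\sqrt{K}} C\,\sn_K^{n-1}(s)\,ds\,d\theta$ forces, for almost every $\theta$, both $\mathrm{cut}_s(\theta) = \pi/\sqrt{K}$ and $\mathcal{A}_f(s,\theta) = C\,\sn_K^{n-1}(s)$ for all $0<s<\pi/\sqrt{K}$; if either failed on a set of positive measure the integral would strictly drop, since the model integrand is positive on $(0,\pi/\sqrt{K})$.

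Second, I would read off the local rigidity. Pointwise equality in Lemma~\ref{VolElement} is exactly equality in Lemma~\ref{MeanCurvComp}, that is $\lambda = m_K(s)$ along the geodesic $\gamma_\theta$, so the hypotheses of Lemma~\ref{RigidLemma} hold and give $g = dr^2 + e^{\frac{2f}{n-1}}\sn_K^2(s)\,g_{S^{n-1}}$ along $\gamma_\theta$, with $\sn_K(s)=\sin(\sqrt{K}s)/\sqrt{K}$. By continuity together with density of the full-measure set of admissible directions, this identity holds throughout the segment domain, so in geodesic polar coordinates the metric is globally of the stated warped form. At this point the metric is determined and only the global topology remains: whether the radial geodesics reconverge to close up $M$ or run off to infinity.

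Third, I would split into the two cases by tracking the warping function $\psi := e^{\frac{f}{n-1}}\sin(\sqrt{K}s)/\sqrt{K}$, whose zero set on $(0,\pi/\sqrt{K}]$ is exactly $\{s=\pi/\sqrt{K}\}$. Since $\mathrm{cut}_s(\theta)=\pi/\sqrt{K}$, every radial geodesic becomes non-minimal precisely when $s$ reaches $\pi/\sqrt{K}$, where $\psi$ vanishes and the geodesic sphere collapses, so the cut locus of $p$ is either a single point or is pushed to infinity. If $s(r,\theta)=\pi/\sqrt{K}$ is attained at some finite $r=D$, all radial geodesics focus at one point $q$, $M$ is the one-point compactification of the ball, hence diffeomorphic to $S^n$ with the metric of case (1) on $r\in[0,D]$ where $\int_0^D e^{-\frac{2f}{n-1}}\,dt = \pi/\sqrt{K}$. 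If instead $s(r,\theta)<\pi/\sqrt{K}$ for all finite $r$, so that $\int_0^\infty e^{-\frac{2f}{n-1}}\,dt = \pi/\sqrt{K}$, then $\psi>0$ for every finite $r$, no cut points occur at finite distance, $\exp_p$ is a diffeomorphism, and $M\cong\mathbb{R}^n$ with the metric of case (2) on $r\in[0,\infty)$. Completeness of $g$ is what guarantees the radial geodesics extend to the relevant parameter range in each case.

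I expect the main obstacle to be this last, cut-locus, step. Because $s$ does not satisfy the triangle inequality, the classical Cheng/Bishop argument that the equality-forced conjugate value produces an honest antipodal point, and that no genuine non-conjugate cut locus survives, cannot be transplanted verbatim; I would instead argue directly from the warped structure $dr^2 + \psi^2 g_{S^{n-1}}$, using that $\psi=0$ at every cut parameter to conclude that the geodesic sphere literally collapses to a point. A secondary delicate point is verifying genuine smoothness of the closed-up metric at the focal point $q$ (and at $p$): this amounts to checking the correct first-order behavior of $\psi$ as $s\to\pi/\sqrt{K}$, and it is here that the normalization of $f$ at the poles enters. Showing that the finite- and infinite-$D$ alternatives are exhaustive and mutually exclusive, in particular that $D$ cannot depend on $\theta$ in the collapsing case, is the remaining technical content and should follow from smoothness of $g$ together with the warped-product identity.
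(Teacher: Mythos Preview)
Your approach is essentially the same as the paper's: equality in the absolute $\mu$-comparison forces $\mathrm{cut}_s(\theta)=\pi/\sqrt{K}$ and equality in Lemma~\ref{VolElement}, hence Lemma~\ref{RigidLemma} gives the twisted-product form $g=dr^2+\psi^2 g_{S^{n-1}}$ on the segment domain, and then one must sort out the global topology.

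Where you are vaguer than the paper is precisely the step you flag as the main obstacle. You assert that in the collapsing case ``all radial geodesics focus at one point $q$'' and hope this will follow from smoothness of $g$ and the warped-product identity; the paper gives a concrete argument that does not rely on analyzing smoothness at the putative pole. Namely: let $D$ be the injectivity radius at $p$, pick $\gamma$ with $\mathrm{cut}(\gamma)=D$ and set $q=\gamma(D)$. Since $\mathrm{cut}_s(\theta)=\pi/\sqrt{K}$ along $\gamma$, one has $\psi\to 0$ at $r=D$; because the Jacobi fields of $dr^2+\psi^2 g_{S^{n-1}}$ along a radial geodesic are $J=\psi E$, the conjugate index of $q$ is $n-1$. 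Then the set $S=\{w:|w|=D,\ \exp_p w=q\}$ is open (any geodesic variation through $\gamma$ in the sphere of radius $D$ gives a Jacobi field vanishing at $0$, hence vanishing at $D$ by the index-$(n-1)$ statement, so $\exp_p$ is locally constant on the $D$-sphere) and closed, hence equals the full $D$-sphere. This simultaneously shows $D(\theta)\equiv D$ and that all rays end at the single point $q$, completing case~(1). Your smoothness heuristic is not wrong in spirit, but the open--closed Jacobi-field argument is what actually closes the gap and avoids the delicate pole-regularity issue you anticipated.
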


\begin{proof}
We have equality in the volume comparison
\[
  \mu(C(p, \frac{\pi}{\sqrt{K}})) =e^{-\frac{n+1}{n-1} f(p)} v\left(n, K, \frac{\pi}{\sqrt{K}} \right),
\]
which implies that for every geodesic with $\gamma(0)=p$ must have $\int_0^{\mathrm{cut}(\gamma)} e^{\frac{-2(f \circ \gamma)(t)}{n-1}} dt = \frac{\pi}{\sqrt{K}}$ where $\mathrm{cut}(\gamma)$ is defined to be the number such that $\gamma(\mathrm{cut}(\gamma))$ is a cut point to $p$ and is $\infty$ if there is no cut point along $\gamma$. Then we also have equality in the mean curvature comparison so we must have $g = dr^2 + e^{\frac{2f}{n-1}}\frac { \sin( \sqrt{K} s) } { K} g_{S^{n-1}}$ in the segment domain, where $f$ and $s$ are functions of $r$ and $\theta$. 

We now have to show that the metric must be a metric on the sphere or Euclidean space. This follows because $g$ is of the form $g = dr^2 + \varphi^2(r, x) g_{S^{n-1}}$ in the segment domain. First note that if the injectivity radius is infinite then the exponential map is a diffeomorphism and $M$ is diffeomorphic to $\mathbb{R}^n$ and we are in case (2). On the other hand, suppose that the injectivity radius at $p$ is $D<\infty$ and let $\gamma$ be a minimizing geodesic with $\gamma(0) = p$ such that $\mathrm{cut}(\gamma)=D$ and let $q = \gamma(D)$. Then from the discussion above we have $s \rightarrow \frac{\pi}{\sqrt{K}}$ at $\gamma(D)$ and so the function $\varphi (r, x) \rightarrow 0$ as $r \rightarrow D$. On the other hand, the Jacobi fields of the metric $g$ along a radial geodesic are of the form $J = \varphi E$, where $E$ is a fixed field in the geodesic sphere. This shows that the index of $q$ as a conjugate point to $p$ along $\gamma$ is $(n-1)$. 
 
Now consider the set $S$ of vectors $w \in T_pM$ such that $exp_p(w) = q$. We want to show that if $|w|=D$ then $w \in S$. To see this, let $\theta(t)$ be a curve in the sphere of radius $D$ with $\theta(t) \in S$, then consider the geodesic variation $v(r, t) = exp_p ( \frac{\theta(t) r}{D})$ the derivative with respect to $t$ at $0$ is a Jacobi field along $\gamma$ which vanishes at $\gamma(0)$, since the index is $(n-1)$ it must be a proper Jacobi field, i.e. $\frac{\partial}{\partial t} v(t, D) = 0$, so $v(t,D) = v(0, D)= q$. This shows that the set $S$ is both open and closed in the sphere of radius $D$, and thus must be the whole sphere. It follows that every geodesic $\gamma$ must have $\mathrm{cut}(\gamma)= D$ and that $\gamma(D) = q$. This gives us case (1). 
\end{proof}

When $K \leq 0$ we have a similar local rigidity result for the volume.  

\begin{theorem} \label{VolRigid2}
Suppose that $(M,g)$ is complete and satisfies $\mathrm{Ric}_f^1 \geq Ke^{\frac{-4f}{n-1}}g$ and there is a $p$ and $S$ such that $\mu(C(p,S)) = e^{-\frac{n+1}{n-1} f(p)} v\left(n, K, S \right)$, 
then $C(p,S)$ is diffeomorphic to an open disk with twisted product metric 
\[
g = dr^2 + e^{\frac{2(f(r, \theta))}{n-1}} \mathrm{sn}^2_K(s(r, \theta) ) g_{S^{n-1}}.
\]
\end{theorem}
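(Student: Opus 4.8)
The plan is to mirror the proof of Theorem~\ref{VolRigid1}, but to stop once the metric has been identified: for $K \le 0$ the model warping function $\sn_K$ has no interior zeros, so the region cannot close up into a sphere and one only obtains a local twisted product on $C(p,S)$. Throughout I work in geodesic polar coordinates $(r,\theta)$ at $p$, write $\mathcal{A}_f = e^{-f}\mathcal{A}$ and $s=s_p$, and recall from the proof of Theorem~\ref{VolComp} that
\[
  \mu(C(p,S)) = \int_{S^{n-1}} d\theta \int_0^{\min\{\mathrm{cut}_s(\theta),\, S\}} \mathcal{A}_f(s,\theta)\, ds,
\]
where $\mathrm{cut}_s(\theta)$ is the $s$-value of the first cut point in direction $\theta$.

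First I would force pointwise equality. By Lemma~\ref{VolElement} the ratio $\mathcal{A}_f(s,\theta)/\sn_K^{n-1}(s)$ is non-increasing in $s$ along each radial geodesic, with limit $e^{-\frac{n+1}{n-1}f(p)}$ as $s\to 0^+$; hence $\mathcal{A}_f(s,\theta)\le e^{-\frac{n+1}{n-1}f(p)}\sn_K^{n-1}(s)$ throughout the segment domain. Since $\sn_K^{n-1}\ge 0$, extending the inner integration range to $[0,S]$ gives $\mu(C(p,S))\le e^{-\frac{n+1}{n-1}f(p)} v(n,K,S)$, and the equality hypothesis forces both inequalities to be equalities for almost every $\theta$. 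The first being equality yields $\mathcal{A}_f(s,\theta)=e^{-\frac{n+1}{n-1}f(p)}\sn_K^{n-1}(s)$ for $0<s\le S$, and the second yields $\mathrm{cut}_s(\theta)\ge S$ (otherwise the strictly positive integral of $\sn_K^{n-1}$ over $(\mathrm{cut}_s(\theta),S]$ would be lost). As $\mathcal{A}_f$ and $\mathrm{cut}_s$ are continuous in $\theta$ and the exceptional set is open of measure zero, these hold for \emph{every} $\theta$. In particular $C(p,S)$ lies in the segment domain and $\frac{d}{ds}\log\mathcal{A}_f=m_K(s)$ identically there.

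The identity $\frac{d}{ds}\log\mathcal{A}_f = m_K(s)$ is exactly equality in Lemma~\ref{MeanCurvComp}, i.e.\ $\lambda(\gamma(s))=m_K(s)$ along every radial geodesic, which is the equality case of the Riccati inequality in Lemma~\ref{Thm:Bochner}. Thus Lemma~\ref{RigidLemma} applies along each such geodesic and yields
\[
  g = dr^2 + e^{\frac{2f(r,\theta)}{n-1}}\,\sn_K^2(s(r,\theta))\, g_{S^{n-1}}
\]
on all of $C(p,S)$, with $f$ and $s$ viewed as functions of $(r,\theta)$; this is the asserted twisted-product form. Finally I would read off the topology: since the open region $\{s_p<S\}$ sits inside the segment domain, $\exp_p$ restricts to a diffeomorphism from the star-shaped set $\{v\in T_pM : s_p(\exp_p v)<S\}$ onto it, so it is diffeomorphic to an open disk. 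Unlike Theorem~\ref{VolRigid1}, here $\sn_K>0$ on $(0,S]$ for $K\le 0$, so the warping coefficient never degenerates and no global closing-up into a sphere occurs.

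I expect the main obstacle to be the cut-locus bookkeeping in the second paragraph: one must argue carefully that equality in the integrated comparison upgrades to $\mathrm{cut}_s(\theta)\ge S$ for \emph{all} directions, so that the entire region $C(p,S)$ genuinely lies in the segment domain before Lemma~\ref{RigidLemma} can be invoked pointwise. Once this is secured, the rigidity and the diffeomorphism to a disk follow directly, the essential structural difference from Theorem~\ref{VolRigid1} being precisely that $\sn_K$ has no positive zero when $K\le 0$.
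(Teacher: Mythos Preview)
Your approach is correct and is exactly what the paper does (in two lines): equality in the volume comparison forces both $\mathrm{cut}_s(\theta)\ge S$ for every direction and pointwise equality in Lemma~\ref{MeanCurvComp}, after which Lemma~\ref{RigidLemma} supplies the twisted-product form of the metric and the exponential map gives the open-disk topology. Your write-up simply makes explicit the cut-locus bookkeeping that the paper's proof leaves to the reader.
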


\begin{proof}
The argument is very similar to the above, we must have that there are no cut points to $p$ in $C(p,S)$ which gives the topological structure, and then the form of the metric comes from the equality in the mean curvature comparison. 
\end{proof} 


\section{$\alpha$-Holonomy}\label{Sec:Hol}

In this section we investigate the possible holonomy groups of $\nabla^\alpha$, which we will call $\alpha$-holonomy and denote by $\mathrm{Hol}^\alpha(M)$. Recall that for a linear connection on a connected manifold, the holonomy group at $p$ is the group of invertible linear maps $h: T_pM \rightarrow T_pM$ given by parallel translation around some $C^1$-piecewise loop with basepoint $p$. Furthermore, one can show that the holonomy group is in fact a Lie group, and if $M$ is simply connected, $\mathrm{Hol}^\nabla(M)$ is connected. Changing the point $p$ conjugates the holonomy group, so holonomy groups at different points are all isomorphic and we can talk about the holonomy of the connection.  For this and other background on holonomy groups of linear connections we refer the reader to \cites{Besse, KobayashiNomizu, Petersen} and the reference there-in.

The structure of the holonomy groups is related to the existence of parallel structures by the following ``fundamental principle'' (see \cite{Besse}*{10.19}). 

\begin{proposition} \label{Prop:FundPrinc1}
Let $\nabla$ be a linear connection on a connected manifold. Fix non-negative integers $r$ and $s$, then the following are equivalent 
\begin{enumerate}
\item There exists a tensor field of type $(r,s)$ which is invariant under parallel transport.

\item There exists a tensor field of type $(r,s)$ which has zero covariant derivative.

\item There exists a point $p \in M$ and, on $T_pM$, a tensor $\mathcal{T}_p$ of type $(r,s)$ which is invariant under the holonomy group of the connection at $p$. 
\end{enumerate}
\end{proposition}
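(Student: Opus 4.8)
The plan is to establish the equivalence $(1)\Leftrightarrow(2)$ directly, then close the loop with $(2)\Rightarrow(3)$ and $(3)\Rightarrow(1)$, working throughout with the natural action of parallel transport on the tensor bundle $T^r_sM=(TM)^{\otimes r}\otimes(T^*M)^{\otimes s}$. A linear isomorphism $h\colon T_pM\to T_qM$ induces an isomorphism of the fibers $(T^r_sM)_p\to(T^r_sM)_q$, and parallel transport $P_\gamma$ of a tensor along a curve $\gamma$ is by definition this induced action of the vector parallel transport. The single analytic fact I will use is that a tensor field $T$ is parallel along $\gamma$, i.e.\ solves the linear first-order ODE $\nabla_{\dot\gamma}T=0$, if and only if $T_{\gamma(t)}=P_{\gamma,0\to t}(T_{\gamma(0)})$; this is just uniqueness of solutions of that ODE.

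For $(1)\Leftrightarrow(2)$: if $\nabla T=0$ then $\nabla_{\dot\gamma}T=0$ along every curve, so by the fact above $T$ is carried to itself by every $P_\gamma$, giving $(1)$. Conversely, if $T$ is invariant under parallel transport, then fixing $X\in T_pM$ and a curve $\gamma$ with $\dot\gamma(0)=X$, the curve $t\mapsto(P_{\gamma,0\to t})^{-1}T_{\gamma(t)}$ is constant; differentiating at $t=0$ gives $\nabla_XT=0$, and since $p$ and $X$ are arbitrary this is $(2)$.

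The implication $(2)\Rightarrow(3)$ is immediate: a parallel field $T$ restricts to a tensor $\mathcal{T}_p:=T_p$, and since every holonomy element is $P_\sigma$ for a loop $\sigma$ based at $p$, invariance under parallel transport forces $h(\mathcal{T}_p)=\mathcal{T}_p$ for every $h$ in the holonomy group at $p$. The substantive direction is $(3)\Rightarrow(1)$. Here I would use connectedness of $M$ to join $p$ to an arbitrary $q$ by a piecewise-$C^1$ curve $\gamma$ and \emph{define} $T_q:=P_\gamma(\mathcal{T}_p)$. Well-definedness is exactly where the hypothesis enters: if $\gamma_1,\gamma_2$ both run from $p$ to $q$, then $\gamma_2^{-1}*\gamma_1$ is a loop at $p$, its parallel transport lies in the holonomy group, and holonomy-invariance of $\mathcal{T}_p$ forces $P_{\gamma_1}(\mathcal{T}_p)=P_{\gamma_2}(\mathcal{T}_p)$. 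By construction the resulting field is invariant under parallel transport, which is $(1)$ and hence, by the first step, $(2)$.

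The main obstacle is not the algebra but the \emph{smoothness} of the field $T$ produced in $(3)\Rightarrow(1)$, since it is defined by an a priori path-dependent recipe. To handle this I would argue locally: near any $q$, choose a smooth family of curves from $q$ to nearby points (for instance radial curves in a coordinate chart) and solve the parallel transport ODE, which has smooth dependence on initial data and on the curve, so that transporting $T_q$ outward yields a smooth local field; by the well-definedness already established this local field coincides with $T$, so $T$ is smooth near $q$. This makes $\nabla T=0$ meaningful and completes the cycle.
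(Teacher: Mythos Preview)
Your argument is correct and is the standard proof of this ``fundamental principle.'' Note, however, that the paper does not give its own proof of this proposition: it is stated with a citation to \cite{Besse}*{10.19} and used as a known background fact. So there is no paper proof to compare against; what you have written is essentially the textbook argument one finds in Besse or Kobayashi--Nomizu, including the crucial smoothness step in $(3)\Rightarrow(1)$, which you handle correctly via smooth dependence of the parallel transport ODE on parameters.
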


In particular, since the metric is always parallel with respect to its Levi-Civita connection,  the holonomy group of a Levi-Civita connection is isomorphic to a subgroup of $O(n)$. Berger \cite{Berger} classified  the groups that can arise as the holonomy of a Levi-Civita connection on a simply connected manifold and the classification is quite restrictive.  On the other hand, Hano and Ozeki \cite{HanoOzeki} have shown for a general connection that, as long as the tangent bundle can be reduced to a given closed linear subgroup $G$ (a topological condition), then there is a linear connection with holonomy $G$. The groups that can arise as the holonomy group of a torsion free connection have also been classified by Merkulov-Schwachh\"ofer \cite{MeSch} and Bryant \cite{Bryant}.   We are interested in the possibilities for $\alpha$-holonomy. Our first consideration is the existence of a parallel $n$-form. 

\begin{proposition} \label{Prop:SLn}
Let $\alpha$ be a closed one-form on a Riemannian manifold $(M,g)$, then  $\mathrm{Hol}^\alpha(g)$ is isomorphic to a subgroup of the group $\{ A \in GL(n, \mathbb{R}) : \mathrm{det}(A) = \pm 1 \}$. Moreover, $(M,g)$ is orientable if and only if $\mathrm{Hol}^\alpha(g)$ is isomorphic to a subgroup of $\mathrm{SL}_n(\mathbb{R})$. 
\end{proposition}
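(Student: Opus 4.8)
The plan is to exhibit a parallel volume object for $\nabla^\alpha$ and then invoke the fundamental principle (Proposition~\ref{Prop:FundPrinc1}) to turn its existence into an invariance property of the holonomy group. The basic input is how $\nabla^\alpha$ acts on the top exterior power. The connection difference tensor is $A_X Y := \nabla^\alpha_X Y - \nabla_X Y = -\alpha(X) Y - \alpha(Y) X$, whose trace is $\mathrm{tr}\, A_X = -(n+1)\alpha(X)$; since $\mathrm{dvol}_g$ is $\nabla$-parallel, this gives $\nabla^\alpha_X \mathrm{dvol}_g = (n+1)\alpha(X)\,\mathrm{dvol}_g$, which is exactly the computation in the proof of Proposition~\ref{Prop:Universal} specialized to $\psi = 0$. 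Equivalently, if $E_1(t),\dots,E_n(t)$ is a $\nabla^\alpha$-parallel frame along a curve $\gamma$, then $F(t) := \mathrm{dvol}_g(E_1(t),\dots,E_n(t))$ satisfies $F' = (n+1)\alpha(\dot\gamma) F$, so the determinant of $\nabla^\alpha$-parallel transport is governed entirely by the volume form.

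I would first dispose of the orientable case. Here $\mathrm{dvol}_g$ is globally defined, and Proposition~\ref{Prop:ParVolForm} supplies a parallel volume form $\omega$ for $\nabla^\alpha$. By the equivalence of (2) and (3) in Proposition~\ref{Prop:FundPrinc1}, $\omega_p$ is invariant under the holonomy group at $p$; but a linear map fixing a nonzero top form has determinant $1$, so $\mathrm{Hol}^\alpha(g) \subseteq \mathrm{SL}_n(\mathbb{R})$. This already proves the forward direction of the ``moreover'' clause.

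For a general, possibly non-orientable, $M$ I would pass to the orientable double cover $\pi \colon \widehat M \to M$ and set $\widehat\alpha = \pi^*\alpha$, which is again closed. The connection $\nabla^{\widehat\alpha}$ is the pullback of $\nabla^\alpha$, so parallel transport downstairs is computed by parallel transport upstairs, and by the orientable case $\mathrm{Hol}^{\widehat\alpha}(\widehat M) \subseteq \mathrm{SL}_n(\mathbb{R})$. Now for any loop $\gamma$ in $M$, the orientation character $\pi_1(M) \to \{\pm 1\}$ sends $\gamma^2$ to $+1$, so $\gamma^2$ lifts to a loop in $\widehat M$ and its $\nabla^\alpha$-holonomy has determinant $1$; hence $\det(h_\gamma)^2 = 1$ and $\det(h_\gamma) = \pm 1$, which is the first assertion. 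For the reverse direction of the ``moreover'' clause, suppose $\mathrm{Hol}^\alpha(g) \subseteq \mathrm{SL}_n(\mathbb{R})$. Then every holonomy element has positive determinant, so $\nabla^\alpha$-parallel transport preserves orientation around every loop; transporting a chosen orientation of $T_pM$ along paths yields a well-defined global orientation (path independence holds because any two paths differ by a loop, whose holonomy preserves orientation), so $M$ is orientable.

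The step I expect to require the most care is the non-orientable case: checking that the pullback connection and its holonomy faithfully compute the determinant of parallel transport on $M$, and that orientation-reversing loops contribute exactly the factor $-1$. The squaring trick via the orientation character is what keeps this clean and avoids tracking deck transformations explicitly; the alternative of working directly with a parallel Riemannian density on $M$ runs into the fact that Proposition~\ref{Prop:FundPrinc1} is phrased for honest tensor fields rather than densities, so I would prefer the double-cover route.
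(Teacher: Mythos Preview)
Your proof is correct and follows essentially the same route as the paper: the parallel volume form plus Proposition~\ref{Prop:FundPrinc1} for the orientable direction, and the orientable double cover with the squaring trick $\det(h_\gamma)^2 = \det(h_{\gamma^2}) = 1$ for the general case. The only cosmetic difference is in the converse of the ``moreover'' clause: the paper invokes Proposition~\ref{Prop:FundPrinc1} again to produce a global parallel nondegenerate $n$-form and hence an orientation, whereas you transport an orientation directly using that holonomy in $\mathrm{SL}_n(\mathbb{R})$ is orientation-preserving---both arguments are standard and equivalent.
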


\begin{proof}
First we consider the orientable case. By Proposition~\ref{Prop:Universal} if $(M,g)$ is orientable, then there is an $n$-form which is parallel with respect to $\nabla^\alpha$. By Proposition~\ref{Prop:FundPrinc1} this implies that there is a volume form on $T_pM$ which is invariant under $\mathrm{Hol}_p^\alpha$, showing that $\mathrm{Hol}_p^\alpha$ is isomorphic to a subgroup of $\mathrm{SL}_n(\mathbb{R})$. Conversely, if $\mathrm{Hol}_p^\alpha$ is isomorphic to a subgroup of $\mathrm{SL}_n(\mathbb{R})$, then by Proposition~\ref{Prop:FundPrinc1}, $(M,g)$ admits a parallel, non-degenerate $n$-form. Thus, $(M,g)$ must be orientable. 

In the non-orientable case, consider the orientable double cover, $(\widetilde{M}, \widetilde{g})$ with the 1-form $\widetilde{\alpha}$ being the lift of $\alpha$. Let $h \in \mathrm{Hol}^\alpha_p(g)$ which is given by parallel translation around a curve $\sigma$. Then the curve which is concatenation of $\sigma$ with itself lifts to a loop in $\widetilde{M}$. Since $\widetilde{\alpha}$-parallel translation in the cover is the same as $\alpha$-parallel translation in $M$, this shows that $\mathrm{det}(h^2) = 1$, which implies that $\mathrm{det}(h) = \pm 1$. 
\end{proof}

\begin{remark}
Conversely, by  Propositions~\ref{Prop:Universal} and~\ref{Prop:FundPrinc1}, if $\alpha$ is not a closed 1-form, then $\mathrm{Hol}^\alpha(M)$ does not lie in $\{ A \in GL(n, \mathbb{R}) : \mathrm{det}(A) = \pm 1 \}$.
\end{remark}

In the next section we construct examples showing that $\mathrm{Hol}^\alpha$ can be all of $\mathrm{SL}_n(\mathbb{R})$. 

\subsection{Examples}

In this section we try to build intuition about $\alpha$-Holonomy by collecting examples. 
 
We consider the ansatz of a twisted product. Let $M = B \times F$ with metric of the form $g_M = g_B + e^{2\psi} g_F$ where $\psi:B \times F \rightarrow \mathbb{R}$. Let $\varphi:B \times F \rightarrow \mathbb{R}$ be an arbitrary function, with $\alpha = d\varphi$. Let $X, Y, Z$ be fields on $B$ and $U,V, W$ be fields on $F$. The connection is 
\begin{align*}
  \nabla_X^M Y &= \nabla_X^B Y \\
  \nabla_X^M U &= \nabla_U^M X = d\psi(X) U \\
  \nabla_V^M U &= \nabla_V^F U + d\psi(U)V + d\psi(V) U - e^{2\psi}g_F(U,V) \nabla \psi.
\end{align*}
So $\nabla^\alpha$ is given by 
\begin{align}
\label{twistprod1}
\begin{split}
  \nabla_X^\alpha Y &= \nabla_X^B Y - d\varphi(X) Y - d\varphi(Y) X \\
  \nabla_X^\alpha U &= \nabla_U^\alpha X = (d\psi(X) - d\varphi(X)) U - d\varphi(U) X \\
  \nabla_V^\alpha U &= \nabla_V^F U + (d\psi(U) - d\varphi(U) )V\\
  & \qquad\qquad+ (d\psi(V)- d\varphi(V)) U - e^{2\psi}g_F(U,V) \nabla \psi .
\end{split}
\end{align}
Let $\sigma(t)$ be a curve in $M$ and write $\sigma(t) = (\sigma_1(t), \sigma_2(t))$ where $\sigma_1$ and $\sigma_2$ are curves in $B$ and $F$ respectively. We use $\dot{ }$ to denote derivative in the $t$ direction. 
 
Let $P(t) = a(t) X(t) + b(t) U(t)$ where $X$ is a vector field on $B$ and $U$ is a vector field on $F$. Then using the equations above we have 
\begin{align}
\label{Peqn} 
\begin{split}
\nabla_{\dot{\sigma}}^\alpha P   &= \dot{a} X + a\left( \nabla_{\dot{\sigma_1}} X - d\varphi(X) \dot{\sigma}_1 - d\varphi(\dot{\sigma}_1) X \right) + a(d\psi - d\varphi)(X) \dot{\sigma}_2  \\
 &\quad - ad\varphi( \dot{\sigma}_2) X + b(d\psi-d\varphi)(\dot{\sigma}_1) U - b d\varphi(U) \dot{\sigma}_1  + \dot{b} U   \\
 &\quad + b \left( \nabla_{\dot{\sigma}_2}^F U + (d\psi - d\varphi)(U) \dot{\sigma}_2 + (d\psi - d\varphi)(\dot{\sigma}_2) U + e^{2\psi} g_F(U, \dot{\sigma}_2) \nabla \psi \right).
\end{split}
\end{align}
 
Recall the  de Rham splitting theorem which states that if the holonomy of a Levi-Civita connection is reducible, then the metric splits as a product.   This result is not true for $\alpha$-holonomy.  To see this, recall the correspondence  between holonomy invariant subspaces  a tangent space and invariant distributions under parallel translation.  See \cite{Besse}*{10.21}. 

\begin{proposition} \label{Prop:FundPrinc2}
Let $\nabla$ be a linear connection on a connected manifold $M^n$. Let $k$ be an integer between $1$ and $n-1$, then the following are equivalent 
\begin{enumerate}
\item There exists a $k$-dimensional distribution which is preserved by parallel transport.

\item The holonomy group leaves invariant a subspace of dimension $k$. 
\end{enumerate}
Moreover, if the connection is torsion free then the distribution is integrable. 
\end{proposition}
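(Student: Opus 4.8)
The plan is to prove Proposition~\ref{Prop:FundPrinc2} by establishing the equivalence (1)$\Leftrightarrow$(2) as a standard consequence of the holonomy principle, and then to handle the integrability addendum separately using torsion-freeness. Since the proposition is stated for a general linear connection, none of the weighted structure enters; the proof is purely formal.

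For the equivalence, I would argue as follows. Suppose (2) holds, so that the holonomy group $\mathrm{Hol}_p(\nabla)$ at $p$ leaves invariant a $k$-dimensional subspace $W_p \subset T_pM$. To build the distribution, I define $W_q$ at an arbitrary $q$ by parallel transporting $W_p$ along any path from $p$ to $q$. The key point is that this is well-defined: if $\tau_1,\tau_2$ are two paths from $p$ to $q$, then $\tau_2^{-1}\cdot\tau_1$ is a loop at $p$, hence its parallel transport lies in $\mathrm{Hol}_p(\nabla)$ and preserves $W_p$, so $\tau_1$ and $\tau_2$ transport $W_p$ to the same subspace $W_q$. This yields a well-defined $k$-dimensional distribution, which is smooth (by transporting a local frame of $W_p$ along smoothly varying paths) and preserved by parallel transport by construction. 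Conversely, if (1) holds and $W$ is a parallel $k$-dimensional distribution, then in particular parallel transport around any loop at $p$ maps $W_p$ to $W_p$, so $W_p$ is $\mathrm{Hol}_p(\nabla)$-invariant, giving (2).

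For the integrability claim under the torsion-free hypothesis, I would show directly that the distribution $W$ is involutive and invoke Frobenius. Let $X,Y$ be local sections of $W$. Because $W$ is preserved by parallel transport, it is preserved by covariant differentiation: $\nabla_Z X$ is a section of $W$ for every section $X$ of $W$ and every vector field $Z$ (this is the infinitesimal form of the parallel-transport invariance, equivalent to part (2) of Proposition~\ref{Prop:FundPrinc1} applied to the subspace). Torsion-freeness then gives
\[
  [X,Y] = \nabla_X Y - \nabla_Y X,
\]
and since both $\nabla_X Y$ and $\nabla_Y X$ are sections of $W$, so is $[X,Y]$. Hence $W$ is involutive, and by the Frobenius theorem it is integrable.

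I expect the only genuine subtlety to be the smoothness of the distribution defined by parallel transport in the (2)$\Rightarrow$(1) direction; the well-definedness via the holonomy loop argument is the conceptual heart, but smoothness requires observing that one may transport a fixed frame of $W_p$ along a smoothly varying family of paths to a neighborhood of any point. Everything else is routine, and I would simply cite \cite{Besse}*{10.21} for the standard equivalence and record the short torsion-free computation above for the integrability statement.
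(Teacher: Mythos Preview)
Your proof is correct and is precisely the standard argument; in fact the paper does not give its own proof of this proposition but simply records it with a reference to \cite{Besse}*{10.21}, exactly as you suggest doing at the end of your proposal. Your explicit torsion-free computation $[X,Y]=\nabla_XY-\nabla_YX\in W$ for integrability is the right addendum and matches the spirit of the paper's treatment.
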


Using this and the equations above we can see that there are twisted product metrics that have reducible $\alpha$-holonomy. 

\begin{proposition} \label{Prop:TwistedProd}
Suppose $(M,g_M)$ is a twisted product as above. Assume further that $\varphi$ and $\psi$ differ by a function on $F$. Then the tangent space on $B$ is invariant under the $\alpha$-Holonomy. 
\end{proposition}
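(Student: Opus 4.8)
The plan is to verify the criterion of Proposition~\ref{Prop:FundPrinc2} directly: I will show that the horizontal distribution $TB \subset TM$ is preserved by $\nabla^\alpha$-parallel transport, which by that proposition is equivalent to $T_pB$ being invariant under the $\alpha$-holonomy. The cleanest route is the standard observation that a distribution $D$ is invariant under parallel transport along every curve if and only if $\nabla^\alpha_Z W \in \Gamma(D)$ for every field $Z$ and every $W \in \Gamma(D)$; for then, writing a parallel field in a frame adapted to $D$, the components transverse to $D$ satisfy a linear homogeneous ODE along the curve and must vanish identically once they vanish at one point. So it suffices to check that $\nabla^\alpha_Z Y$ lies in $TB$ whenever $Y$ is tangent to $B$.

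First I would record the consequence of the hypothesis. Writing $\varphi - \psi = h$ with $h$ a function of $F$ alone, the one-form $d(\varphi-\psi)=dh$ annihilates every vector tangent to $B$; equivalently $d\psi(X) = d\varphi(X)$ for all $X \in \Gamma(TB)$. Next I would feed this into the connection formulas \eqref{twistprod1}. For $X,Y$ tangent to $B$, the formula $\nabla^\alpha_X Y = \nabla^B_X Y - d\varphi(X)Y - d\varphi(Y)X$ already lies in $TB$ with no hypothesis needed. The only dangerous term is the vertical piece of $\nabla^\alpha_U Y = \nabla^\alpha_Y U = (d\psi(Y)-d\varphi(Y))U - d\varphi(U)Y$; here the coefficient $d\psi(Y)-d\varphi(Y)$ of the vertical vector $U$ vanishes exactly by the hypothesis, leaving $\nabla^\alpha_U Y = -d\varphi(U)\,Y \in \Gamma(TB)$. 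By $C^\infty(M)$-linearity in the first slot these two cases cover an arbitrary $Z$, and by the Leibniz rule they cover an arbitrary section of $TB$, so the criterion holds.

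With the criterion verified, Proposition~\ref{Prop:FundPrinc2} immediately yields that the $\alpha$-holonomy group leaves the subspace $T_pB$ invariant, which is the assertion. I do not expect a genuine obstacle here: the content is entirely in the algebraic cancellation of the single vertical term, and the assumption on $\varphi-\psi$ is precisely what is needed to kill it. The one point requiring care is the justification that checking $\nabla^\alpha_Z W \in \Gamma(TB)$ on a spanning set of horizontal and vertical frame fields suffices---this follows from tensoriality in the $Z$ slot together with the Leibniz rule in $W$, and it should be stated rather than tacitly assumed. It is also worth noting that the complementary vertical distribution need \emph{not} be preserved, which foreshadows the merely block-upper-triangular (rather than block-diagonal) structure of the holonomy recorded in the remark following Theorem~\ref{Thm:deRham}.
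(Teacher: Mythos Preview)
Your proof is correct and rests on the same algebraic cancellation as the paper's, namely that $(d\psi - d\varphi)(Y) = 0$ for $Y$ tangent to $B$. The packaging, however, is different. The paper works directly with the parallel transport equation \eqref{Peqn} along an arbitrary curve $\sigma = (\sigma_1,\sigma_2)$: setting $U=0$ and using the hypothesis, the transport equation for a purely horizontal field $P = aX$ collapses to a system with no vertical component, and the authors exhibit the explicit solution $P = e^{\varphi}X$ with $X$ solving $\nabla_{\dot\sigma_1}X = d\varphi(X)\dot\sigma_1$. You instead invoke the standard infinitesimal criterion (a distribution is parallel-invariant if and only if $\nabla^\alpha$ preserves its sections) and verify it pointwise from the connection formulas \eqref{twistprod1}, bypassing any explicit ODE.

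Your route is shorter and more conceptual for this proposition alone. The paper's explicit computation, on the other hand, is not wasted effort: the same formula \eqref{Peqn} is reused immediately in Proposition~\ref{Prop:Twisted2} and Example~\ref{Ex:Warped} to describe the full block-upper-triangular structure of the holonomy in the warped-product case, for which one genuinely needs the transverse components of the transport equation. So the paper is setting up machinery it will use again, whereas your argument is tailored to exactly what this proposition asks.
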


\begin{proof}
Assuming that $\varphi = \psi + \varphi_2$ where $\varphi_2 : F \rightarrow \mathbb{R}$ and taking $U = 0$ in (\ref{Peqn}) gives
\[
  \nabla_{\dot{\sigma}}^\alpha P = \dot{a} X + a\left( \nabla_{\dot{\sigma_1}} X - d\varphi(X) \dot{\sigma}_1 - d\varphi(\dot{\sigma}_1) X- d\varphi( \dot{\sigma}_2) X \right ).
\]
Let $a = e^\varphi$ and $X$ be a field that satisfies $\nabla_{\dot{\sigma_1}} X = d\varphi(X) \dot{\sigma}_1$, then we obtain a solution, so we have that the vector field $P = e^\varphi X$ will be a parallel field. This shows that if $v$ is a vector in the $B$ factor, then its parallel translate will also be in the $B$ factor for all time. In particular, the tangent space of $B$ will be preserved by the holonomy. 
\end{proof}

On the other hand, we will prove a generalization of the de Rham splitting theorem below showing that if the $\alpha$-holonomy of a manifold is reducible, then the metric splits as a twisted product.  If we assume further that the metric a warped product we get some more explicit formulae for $\alpha$-holonomy. 

\begin{proposition}\label{Prop:Twisted2}
Suppose $(M,g_M)$ is a warped product.  That is $g_M$ is a twisted product as above with  $\psi$ is a function of $B$ only, and that $\varphi = \psi + \varphi_2$ . Let $h$ be the element of $\alpha$-holonomy coming from parallel translation around a loop $\sigma$. In terms of the splitting of the tangent space, $T_{\sigma(0)}M = T_{\sigma_1(0)}B + T_{\sigma_2(0)} F$ the matrix of $h$ is of the form
\[
h = \begin{pmatrix} A & C \\ 0 & h_2 \end{pmatrix},
\]
where $h_2$ is the element of the $\alpha_2$-holonomy on $F$ generated by the loop $\sigma_2$, where $\alpha_2 = d\varphi_2$.
\end{proposition}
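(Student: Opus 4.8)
The plan is to take an arbitrary $\nabla^\alpha$-parallel field $P$ along the loop $\sigma$, decompose it according to the product splitting as $P = P_B + P_F$ with $P_B(t) \in T_{\sigma_1(t)}B$ and $P_F(t) \in T_{\sigma_2(t)}F$, and show that the $F$-component evolves by $\alpha_2$-parallel transport along $\sigma_2$ completely independently of the $B$-component. Concretely, I would write $P = aX + bU$ as in the setup preceding \eqref{Peqn}, so that $P_B = aX$ and $P_F = bU$, and substitute the warped-product hypotheses into \eqref{Peqn}. Note that since $\sigma$ is a loop in $M = B \times F$, its projection $\sigma_2$ is a loop in $F$ with the same basepoint, so the element $h_2$ of the $\alpha_2$-holonomy it generates is well defined.

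The simplifications to record are the following: since $\psi$ depends on $B$ only, $d\psi$ annihilates every $F$-direction and the gradient $\nabla\psi$ (taken in $g_M$) is tangent to $B$; and since $\varphi = \psi + \varphi_2$ with $\varphi_2$ a function of $F$, we have $d\varphi(X) = d\psi(X)$ on $B$-fields, $d\varphi(U) = d\varphi_2(U)$ on $F$-fields, and $d\psi - d\varphi = -d\varphi_2$, which in particular vanishes on any $B$-direction. Feeding these into \eqref{Peqn} and collecting the terms tangent to $F$, I expect every term carrying a factor of $a$ to drop out and the $F$-part of $\nabla^\alpha_{\dot{\sigma}}P$ to reduce exactly to
\[
  \dot{b}\,U + b\left(\nabla^F_{\dot{\sigma}_2} U - d\varphi_2(U)\,\dot{\sigma}_2 - d\varphi_2(\dot{\sigma}_2)\,U\right) = \nabla^{\alpha_2}_{\dot{\sigma}_2} P_F,
\]
where $\nabla^{\alpha_2}$ is the weighted connection on $(F, g_F)$ with $\alpha_2 = d\varphi_2$. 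The crucial structural point is that this $F$-part involves only $b$ and $U$, with no dependence whatsoever on $P_B$.

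With this in hand the block form follows. The condition $\nabla^\alpha_{\dot{\sigma}}P = 0$ forces $\nabla^{\alpha_2}_{\dot{\sigma}_2}P_F = 0$, so $P_F$ is an $\alpha_2$-parallel field on $F$ along $\sigma_2$. Because this is a linear first-order ODE in $P_F$ alone, uniqueness gives that $P_F(0) = 0$ implies $P_F \equiv 0$; equivalently the $B$-tangent space is holonomy-invariant (as already established in Proposition~\ref{Prop:TwistedProd}), which is precisely the vanishing of the lower-left block. Moreover, for arbitrary initial data the value of $P_F$ after traversing the loop is the $\alpha_2$-parallel transport of $P_F(0)$ around $\sigma_2$, namely $h_2\,P_F(0)$, independent of $P_B(0)$; this is exactly the assertion that the lower-right block equals $h_2$. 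The off-diagonal block $C$ is left unconstrained, reflecting that the $B$-part of \eqref{Peqn} still retains the terms $-b\,d\varphi_2(U)\,\dot{\sigma}_1$ and $b\,e^{2\psi}g_F(U,\dot{\sigma}_2)\,\nabla\psi$, which couple the $F$-component back into the $B$-evolution.

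The main obstacle is purely bookkeeping: correctly determining which of the many terms of \eqref{Peqn} survive under the warped-product hypotheses, and in particular verifying that $\nabla\psi$ is tangent to $B$ (so the final term of \eqref{Peqn} contributes nothing to the $F$-part) and that the surviving $F$-terms assemble precisely into the $\alpha_2$-covariant derivative on $F$ rather than some other connection. Once the clean decoupling of the $F$-component is established, the upper-triangular form and the identification of $h_2$ are immediate.
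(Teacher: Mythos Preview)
Your proposal is correct and follows essentially the same route as the paper: both feed the warped-product simplifications $d\psi-d\varphi=-d\varphi_2$, $d\varphi_2|_{TB}=0$, and $\nabla\psi\in TB$ into \eqref{Peqn} to see that the $F$-part of the parallel-transport equation is exactly $\nabla^{\alpha_2}_{\dot\sigma_2}P_F=0$. The only organizational difference is that the paper fixes the ansatz $a=e^{\varphi}$, $b=1$ and builds parallel fields as $U+e^{\varphi}X_U$ with $U$ $\alpha_2$-parallel (thereby also describing $A$ and $C$ explicitly), whereas you argue directly via the decoupling of the $F$-component; the mathematical content is the same.
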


\begin{proof} 
Consider (\ref{Peqn}) with $a = e^\varphi$, $b=1$. Then we obtain, 
\begin{align*}
  \nabla_{\dot{\sigma}}^\alpha P &= e^\varphi \left( \nabla_{\dot{\sigma_1}} X - d\psi(X) \dot{\sigma}_1\right) - d\varphi_2(U) \dot{\sigma}_1 + e^{2\psi} g_F(U, \dot{\sigma}_2) \nabla \psi \\
  &\qquad + \left( \nabla_{\dot{\sigma}_2}^F U - d\varphi_2(U) \dot{\sigma}_2 - d\varphi_2(\dot{\sigma}_2) U \right) .
\end{align*}
Let $U$ be an $\alpha_2$-parallel field along $\sigma_2$, let $X=X_U$ be the solution to 
\begin{align}
\label{X_Ueqn}  0= e^\varphi \left( \nabla_{\dot{\sigma_1}} X - d\psi(X) \dot{\sigma}_1\right) - d\varphi_2(U) \dot{\sigma}_1 + e^{2\psi} g_F(U, \dot{\sigma}_2) \nabla \psi.
\end{align}
Which, after $U$ has been fixed, is an in-homogeneous first order system of equations for $X(t)$. Then we have that the $\alpha$-parallel fields are all of the form $U + e^\varphi X_U$. 
 
Putting this together in terms of the $\alpha$-holonomy around a loop $\sigma$ and the splitting of the tangent space, $T_{\sigma(0)}M = T_{\sigma_1(0)}B + T_{\sigma_2(0)} F$ we have that the matrix for the element $h$ of $\mathrm{Hol}^\alpha(M,g)$ coming from parallel translation around $\sigma$ is of the form
\[
  h = \begin{pmatrix}
    A & C \\
    0 & h_2
  \end{pmatrix},
\]
where $h_2$ is the element of the $\alpha_2$-holonomy on $F$ generated by the loop $\sigma_2$. $A$ is the matrix given by solving the system $\nabla_{\dot{\sigma_1}} X = d\psi(X) \dot{\sigma}_1$ and $C$ is the matrix given by solving for $X_U$ in (\ref{X_Ueqn}).
\end{proof} 
 
\begin{example}\label{Ex:Warped}
Consider a product metric where $\varphi$ is a function on $F$. This is the case where $\psi = 0$ and $\varphi = \varphi_2$. Then, since $d\psi= 0$, $A$ will just be $h_1$, the holonomy element generated by parallel translation around $\sigma_1$. So we have
\[
  h = \begin{pmatrix}
    h_1 & C \\
    0 & h_2
  \end{pmatrix},
\]
where, by (\ref{X_Ueqn}),  $C$ is given by the solutions to $ \nabla_{\dot{\sigma_1}} X = e^{-\varphi_2} d\varphi_2(U) \dot{\sigma}_1$. 

To make this more concrete, consider the case where $g_B = dx^2$ and $g_F = dy^2$ and $M$ is $2$-dimensional Euclidean space with $\varphi= \varphi_2(y)$. Taking $U = \frac{\partial}{\partial y}$, write $X = v(t) \frac{\partial}{\partial x}$ then from (\ref{X_Ueqn})  we have 
\begin{align*}
  \frac{dv}{dt} &= e^{-\varphi_2(\sigma_2(t))} \frac{d \varphi_2}{dy} \frac{d \sigma_1}{dt} \\
  v(t) &= v(0) + \int_0^t \left( e^{-\varphi_2(\sigma_2(t))} \frac{d \varphi_2}{dy}|_{\sigma_2(t)} \frac{d \sigma_1}{dt} \right) dt.
\end{align*}
Let $\sigma$ be a square with vertices $(0, y_0), (a, y_0),(a, y_1), (0, y_1)$. Then we have $v(t) = v(0) + a \left( e^{-\varphi_2(y_0)} \frac{d \varphi_2}{dy}|_{y_0} - e^{-\varphi_2(y_1)} \frac{d \varphi_2}{dy}|_{y_1} \right). $ Which shows that, as long as the quantity $e^{-\varphi_2(y)} \frac{d \varphi_2}{dy}|_{y}$ is non-constant in $y$ that we have
\[
  \mathrm{Hol}^{\alpha}( \mathbb{R}^2)
  = \left\{ \begin{pmatrix}
    1 & c \\
    0 & 1
  \end{pmatrix} : c \in \mathbb{R} \right\},
\]
which is, of course isomorphic to $\mathbb{R}$. 

On the other hand, note that if $e^{-\varphi_2(y)} \frac{d \varphi_2}{dy}|_{y}$ is constant, then $\varphi_2= - \ln( d - cy)$ for some constants $c$ and $d$ and is thus not defined for all $y$. 
\end{example}

For a Levi-Civita connection, since the holonomy group is contained in the orthogonal group, if a distribution is preserved by the holonomy, then so is its orthogonal complement. Example~\ref{Ex:Warped} shows that this is not true generally for $\alpha$-holonomy.  Recall that a simply connected Riemannian manifold admits a parallel vector field if and only if it is isometric to a product metric with one factor a flat Euclidean space. The following example shows that this result is also not true for the $\alpha$-connection. 

\begin{example}
In the notation of Proposition~\ref{Prop:Twisted2}, assume that $B$ is one-dimensional, i.e. $g = dr^2 + e^{2\psi(r)} g_F$ and $\varphi = \psi(r) + \varphi_2$. Then, when $U = 0$, if we write $X = v(t) \frac{\partial}{\partial r}$, we have 
\[
  \frac{dv}{dt} = v \frac{d \psi}{dt},
\]
so that $v(t) = c e^{\psi(t)}$ and thus the field $X = k e^\varphi e^{\psi} \frac{\partial}{\partial r} = k e^{2\psi} e^{\varphi_2} \frac{\partial}{\partial r}$ is $\alpha$-parallel (along any curve). Thus in this case, we have that the elements of the holonomy group are of the form 
\[
  \begin{pmatrix}
    1 & C \\
    0 & h_2
  \end{pmatrix}.
\]
\end{example}
 
In fact, the following example shows that we can have an entire basis of $\alpha$-parallel fields which are not parallel. 
 
\begin{example}
Consider the previous example with $g_F$ Euclidean, $\varphi_2 = 0$, and $\psi(r) = r$. Then $(M,g)$ is hyperbolic space with constant curvature $-1$. From the previous example, we know that the field $e^{2r} \frac{\partial}{\partial r}$ is $\alpha$-parallel around any curve. 
 
Take $U = \frac{\partial}{\partial y_i}$ to be a parallel coordinate field on the Euclidean factor. Then the field $X_U$ is the solution to the equation 
\[
  \frac{dv}{dt} - v \frac{dr}{dt} + e^r \frac{dy_i}{dt} = 0 ,
\]
where $X_U = v(t) \frac{\partial}{\partial r}$. The general solution to this equation is $v(t) = (C+ y_i(t))e^{r(t)}$ We thus have that the field $\frac{\partial}{\partial y_i} + y_i e^{2r} \frac{\partial}{\partial r}$ is $\alpha$-parallel. In particular, we have a global basis of $\alpha$-parallel fields. Thus the holonomy group contains only the identity element. 
\end{example}
 
Now we consider an example where $\mathrm{Hol}^{\alpha}(M,g)$ is $\mathrm{SL}_n(\mathbb{R})$. 

\begin{example}\label{Ex:S2}
Consider the round 2-sphere $(S^2, dr^2 + \sin^2 r d\theta^2)$, with $\varphi =  \cos r$. Then, $\mathrm{Hol}^\alpha(S^2) = \mathrm{SL}_2(\mathbb{R})$. We prove this by considering the Lie algebra $\mathfrak{hol}^\alpha(S^2)$, and finding two families of loops, whose associated Lie algebra elements generate $\mathfrak{sl}_2(\mathbb{R})$. Both families will consist of loops based at a point on the equator: $p_0 = (\pi/2,0)$, written in the $(r,\theta)$ coordinates.

First consider a family of loops $\sigma^s(t)$ ($s\in[\pi/2,\pi)$) based at $p_0$. Each loop will consist of 3 smooth pieces:
\[
  \sigma^{s}(t) = \begin{cases}
    (t, 0) & t\in[\pi/2,s]\\
    (s, t-s) & t\in[s,s+2\pi]\\
    (2\pi+2s-t, 0) & t\in [2\pi+s,3\pi/2+2s].
  \end{cases}
\]
Note that $\sigma^{\pi/2}$ is just the equator. Let $h_s$ denote the element of $\mathrm{Hol}^\alpha(S^2)$ generated by $\sigma^s$. Then, explicit computation shows that
\[
  A = \left.\frac{d}{ds}h_s\right|_{s=\pi/2}
  = \begin{pmatrix}
    1 & -1\\
    2/\pi + 4\pi /3 & -1
  \end{pmatrix} \in \mathfrak{hol}^\alpha_{p_0}(S^2).
\]

The second family consists of loops
\[
  \sigma^s(t) = \begin{cases}
    (t,0) & t\in[\pi/2,\xi]\\
    (\xi, t-\xi) & t\in[\xi,\xi+s]\\
    (2\xi+s-t,s) & t\in[\xi+s,2\xi+s-\pi/2]\\
    (0,2\xi+2s-\pi/2-t) & t\in[2\xi+s-\pi/2,2\xi+2s-\pi/2].
  \end{cases}
\]
where $s\geq 0$, and $\xi = \cos^{-1}\left(\frac{1-\sqrt{5}}{2}\right)$ (chosen for convenience of computation). As before, let $h_s$ denote the holonomy element generated by $\sigma^s$. Then, we get:
\[
  B = \left.\frac{d}{ds}h_s\right|_{s=0}
  = \begin{pmatrix}
    0 & \frac{1-\sqrt{5}}{2} e^{\frac{\sqrt{5}-1}{2}}\\
    1 & 0
  \end{pmatrix}\in \mathfrak{hol}^\alpha_{p_0}(S^2).
\]

Since $[A,B]$ is linearly independent from $A$ and $B$, $A$ and $B$ generate a 3 dimensional Lie algebra, so $\mathfrak{hol}^\alpha_{p_0}(S^2) = \mathfrak{sl}_2(\mathbb{R})$, and $\mathrm{Hol}^\alpha(S^2) = \mathrm{SL}_2(\mathbb{R})$.
\end{example}

\begin{example}
We can follow the ideas of Example~\ref{Ex:S2} to show that for the round $n$-sphere $(S^n, dr^2 + \sin^2 r g_{S^{n-1}})$ with $\varphi = \cos r$, we get $\mathrm{Hol}^{\alpha}(S^n) = \mathrm{SL}_n(\mathbb{R})$.
\end{example}

\subsection{Parallel vector fields and $1$-forms}

The examples of the previous section show that splitting results for the Levi-Civita connections do not hold for $\nabla^\alpha$. In this section we show that there is still rigidity when we have $\alpha$-parallel vector fields and $1$-forms. 

In the case of an $\alpha$-parallel vector field we have the following classification. 

\begin{proposition} \label{Prop:f-ParallelField}
If there is an $\alpha$-parallel vector field, $V$, on a simply connected, complete manifold $M$, then $M$ is diffeomorphic to $\mathbb{R} \times N$, with a warped product metric $g_{M} = dr^2 + e^{2\psi(r)}g_N$ for some $\psi: \mathbb{R} \rightarrow \mathbb{R}$. Moreover, $\varphi = \psi(r) + \varphi_N(x)$ where $\varphi_N:N \rightarrow \mathbb{R}$ and $V = C e^{2\psi(r)}e^{\varphi_N(x)} \frac{\partial}{\partial r}$ for some constant $C$. 
\end{proposition}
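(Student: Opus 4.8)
The plan is to distill from $V$ a globally defined unit \emph{geodesic gradient} field and then read off the warped product structure from the umbilicity of its level sets. First I would rewrite $\nabla^\alpha_X V = 0$ as $\nabla_X V = d\varphi(X)V + d\varphi(V)X$ for all $X$. Since parallel transport is invertible, a nonzero parallel field cannot vanish anywhere, so $|V|$ is a smooth positive function and $U := V/|V|$ is a well-defined unit vector field. The computational heart of the argument is to differentiate $U = |V|^{-1}V$, using the identity above together with $X(|V|) = d\varphi(X)|V| + d\varphi(V)\,g(X,U)$; the terms involving $d\varphi(X)$ cancel and one is left with
\[
  \nabla_X U = \kappa\bigl(X - g(X,U)U\bigr), \qquad \kappa := d\varphi(U) = U(\varphi).
\]
In particular $\nabla_U U = 0$, so the integral curves of $U$ are unit-speed $g$-geodesics, and $g(\nabla_X U, Y)$ is symmetric in $X$ and $Y$, so the dual $1$-form $U^\flat$ is closed.

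Next I would globalize. Because $M$ is simply connected, the closed form $U^\flat$ is exact, say $U^\flat = dr$, so $U = \nabla r$ with $|\nabla r| \equiv 1$. Completeness guarantees that the gradient flow $\Phi_t$ of $U$ is defined for all time; since $r(\Phi_t(x)) = t + r(x)$, the map $(t,x) \mapsto \Phi_t(x)$ is a diffeomorphism from $\R \times N$ onto $M$, where $N = r^{-1}(0)$ is a connected leaf, and in these coordinates $g = dr^2 + g_r$ for a one-parameter family of metrics $g_r$ on $N$. The displayed identity says each leaf is totally umbilic with shape operator $\kappa\,\mathrm{Id}$, so $g_r$ evolves by $\partial_r g_r = 2\kappa\,g_r$.

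The crux is to show that $\kappa$ is a function of $r$ alone; this is what upgrades the a priori twisted product to a genuine warped product and simultaneously forces $\varphi$ to split. To see it I would compute $U(\log|V|) = 2\kappa$ and, for $X$ orthogonal to $U$, $X(\log|V|) = d\varphi(X)$; comparing the $U$-components and the transverse components of $\nabla\log|V|$ and $\nabla\varphi$ gives $d(\log|V| - \varphi) = \kappa\,dr$. Applying $d$ yields $d\kappa \wedge dr = 0$, so $\kappa = \kappa(r)$ (the leaves being connected). Integrating $\partial_r g_r = 2\kappa(r)g_r$ then gives $g_r = e^{2\psi(r)}g_N$ with $\psi' = \kappa$, a warped product. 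Finally, $\partial_r\varphi = \kappa(r) = \psi'(r)$ forces $\varphi = \psi(r) + \varphi_N(x)$, and feeding this back into $d(\log|V| - \varphi) = \psi'(r)\,dr$ gives $\log|V| = 2\psi(r) + \varphi_N(x) + \mathrm{const}$, whence $V = |V|\,\partial_r = C\,e^{2\psi(r)}e^{\varphi_N(x)}\partial_r$.

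I expect the main obstacle to be the global step rather than the local algebra: one must make sure that completeness together with simple connectivity really yields an honest product $\R \times N$ with connected leaves (so that ``constant on each leaf'' is meaningful), as opposed to a merely local splitting. The pointwise identity for $\nabla_X U$ is a routine if slightly delicate computation, and the one genuinely clever observation is that $\log|V| - \varphi$ is constant on the leaves, which is exactly what pins the warping function down to a function of $r$.
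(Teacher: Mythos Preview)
Your argument is correct.  It runs parallel to the paper's but with a different normalization and without an external citation.  The paper sets $W = e^{-\varphi}V$, observes that $\nabla_X W = d\varphi(W)\,X$ makes $W$ a closed conformal (concircular) field, and then invokes the Brinkmann--Tashiro classification of such fields to obtain the global warped product $g = dr^2 + \rho^2(r)g_N$ with $W = \nabla u$ and $u' = \rho$ in one stroke; the splitting $\varphi = \psi(r) + \varphi_N$ is then read off from $u'' = (\partial_r\varphi)\,u'$ together with a commutation-of-derivatives argument.  You instead normalize to the unit field $U = V/|V|$ and build the splitting by hand via the complete geodesic gradient flow; the price is that you must establish $\kappa = \kappa(r)$ yourself, and your identity $d(\log|V|-\varphi) = \kappa\,dr$ followed by $d\kappa\wedge dr = 0$ is a clean way to do this (note that $\log|V|-\varphi = \log|W|$, so this is exactly the statement that the paper's $|W|$ depends only on $r$, which Brinkmann--Tashiro delivers as part of its conclusion).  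Your route is more self-contained; the paper's is shorter because the global step and the ``warping function depends only on $r$'' step are both packaged into the cited theorem.
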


\begin{remark}
Two special cases are:
\begin{enumerate}
\item If $\psi$ is constant, then we have the product metric $g = dr^2 + g_N$, $\varphi$ is a function on $N$, and $V = e^\varphi \frac{\partial}{\partial r}$. 

\item If $\varphi_N$ is constant then $\varphi = \psi$ and $V = e^{2\varphi} \frac{\partial}{\partial r}$. 
\end{enumerate}
\end{remark}

\begin{proof}
Suppose there is a non-zero vector field $V$ such that $\nabla_{\cdot}^\alpha V = 0$. Then for all $X$ we have 
\[
  \nabla_X V = d\varphi(X) V + d\varphi(V) X.
\]
Our first simple observation is that $V$ does not have a zero. This follows from the uniqueness of $\alpha$-parallel translation starting at a given point along with the fact that the zero vector field is always $\alpha$-parallel. 

Let $W = e^{-\varphi} V$, then we have 
\begin{equation} 
\label{Eqn:f-Parallel} \nabla_X W = e^{-\varphi} \left( -d\varphi(X) V + \nabla_X V \right) = e^{-\varphi}d\varphi(V) X = d\varphi(W) X . 
\end{equation}
In particular, this implies that $g(\nabla_X W, Y) = g(\nabla_Y W, X)$, which implies that the dual 1-form $\omega = g(W, \cdot)$ is a closed one-form. We also have that 
\[
  L_W g(X,Y) = g(\nabla_X W, Y) + g(\nabla_Y W , X) = 2 d\varphi(W)g(X,Y),
\]
so that $W$ is a closed conformal field. Such fields are classified in general. 

Since $W$ is a closed field and $M$ is simply connected we have that $W = \nabla u$ for some function $u:M \rightarrow \mathbb{R}$ and $\Hess u = d\varphi(\nabla u)g$. A result of Brinkmann \cite{Brinkmann} and Tashiro \cite{Tashiro} states that if $(M,g)$ is complete and supports a non-constant function $u$ such that $\Hess u = \chi g$ for some function $\chi$, then $g=dr^2 + \rho^2(r) g_N$, $u = u(r)$, $\frac{du}{dr} = \rho$.

Since $V$ is never zero, $u$ does not have any critical points so we also have that $\psi$ is never zero and we have a global topological splitting $M= \mathbb{R} \times N$. We also have that 
\begin{equation}
  \frac{d^2 u}{dr^2} = \Hess u \left( \frac{\partial}{\partial r},\frac{\partial}{\partial r}\right) = g(\nabla \varphi, \nabla u)= \frac{\partial \varphi}{\partial r} \frac{du}{dr}. \label{Eqn:WarpingFunc}
\end{equation}
Since $\frac{du}{dr}$ is never zero this implies that $\frac{\partial \varphi}{\partial r}$ is a function of $r$. Then, for any field $U$ on $N$, since $[ U, \frac{\partial}{\partial r}] = 0$, we have 
\[
  0 = D_U D_{\frac{\partial}{\partial r}}(\varphi) = D_{\frac{\partial}{\partial r}} \left(D_U \varphi \right).
\]
This implies that $D_U \varphi$ is a function on $N$. Then we can write $\varphi(r,x) = \psi(r) + \varphi_N(x)$ where $\psi: \mathbb{R} \rightarrow \mathbb{R}$ and $\varphi_N: N \rightarrow \mathbb{R}$. Then \eqref{Eqn:WarpingFunc} becomes $\rho'(r) = \psi'(r)\rho(r)$ so that $\rho = Ce^{\psi}$ for some constant $C$. 

Putting this all together, we have that the metric splits as a warped product $g = dr^2 + e^{2\psi(r)}g_N$ where $\varphi(r,x)= \psi(r) + \varphi_N(x)$ and 
\[
  V = e^{\varphi} W = e^\varphi \nabla u = Ce^\varphi e^{\psi} \frac{\partial}{\partial r} = C e^{2\psi(r)}e^{\varphi_N(x)} \frac{\partial}{\partial r}.
\]
\end{proof}

\begin{remark}
Note that in the case where $M$ is not simply connected, the proposition can be applied to the universal cover with pullback density. 
\end{remark}

Now we consider the case of linearly independent $\alpha$-parallel fields.

\begin{theorem}\label{TopParFields}
Suppose that $(M,g,f)$ is complete, simply connected and  admits $m$ linearly independent $\alpha$-parallel vector fields. Then  $M$ is diffeomorphic to $\mathbb{R}^m \times L$ for some $n-m$ dimensional manifold $L$, $g$ is a warped product metric
\[
g = h +  e^{2\varphi} g_L
\]
with $\varphi = \overline{\psi} + \varphi_L$, where  $h$ is a Euclidean or hyperbolic metric on  $\mathbb{R}^m$ and $\overline{\psi}: \mathbb{R}^m \rightarrow \mathbb{R}$. 
\end{theorem}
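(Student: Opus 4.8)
The plan is to extract the pointwise structure of the $\alpha$-parallel fields, identify a local warped-product splitting, and then prove the rigidity that forces the base to be a space form; the induction on $m$ (with Proposition~\ref{Prop:f-ParallelField} as base case) is really just bookkeeping once the base rigidity is understood. Let $V_1,\dots,V_m$ be the $\alpha$-parallel fields. Exactly as in the proof of Proposition~\ref{Prop:f-ParallelField}, I set $W_i=e^{-\varphi}V_i$; then each $W_i$ is a closed conformal field, $\nabla_X W_i=d\varphi(W_i)\,X$, and since $M$ is simply connected $W_i=\nabla u_i$ with $\Hess u_i=\chi_i\,g$ where $\chi_i:=d\varphi(W_i)$. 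Because $\alpha$-parallel translation is a linear isomorphism fixing each $V_i$, the $V_i$ (hence the $W_i$) remain linearly independent at every point, so $\mathcal{D}=\operatorname{span}(W_1,\dots,W_m)$ is a rank-$m$ distribution with $\{\nabla u_i\}$ a global frame.

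First I would establish the local warped-product form. From $\nabla_{W_i}W_j=\chi_j W_i$ one gets $[W_i,W_j]\in\mathcal{D}$, so $\mathcal{D}$ is integrable with \emph{totally geodesic} leaves; dually, for $Y,Z\perp\mathcal{D}$ one computes $g(\nabla_Y Z,W_i)=-\chi_i\,g(Y,Z)$, so $\mathcal{D}^\perp$ is integrable with \emph{totally umbilic} leaves whose mean-curvature normal is the $\mathcal{D}$-part of $\nabla\varphi$. This is precisely the infinitesimal signature of a warped product whose totally geodesic base is tangent to $\mathcal{D}$ and whose umbilic fibre is tangent to $\mathcal{D}^\perp$, and it yields the local form $g=g_B+e^{2\varphi}g_L$ together with the splitting $\varphi=\overline{\psi}+\varphi_L$ (the $\varphi_L$-factor being absorbed into the fibre metric), exactly as in the $m=1$ case.

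The heart of the matter is that the base $B$ (a leaf of $\mathcal{D}$) has constant curvature. Since $\nabla|W_i|^2=2\chi_i\nabla u_i$ is everywhere proportional to $\nabla u_i$, the function $|W_i|^2$, and hence $\chi_i$, is locally a function of $u_i$ alone. Differentiating $\nabla_X W_i=\chi_i X$ then gives $R(X,Y)W_i=\chi_i'(u_i)\bigl(g(W_i,X)Y-g(W_i,Y)X\bigr)$. Feeding $X=W_p,\ Y=W_q$ into the pair symmetry $g(R(W_p,W_q)W_i,W_j)=g(R(W_i,W_j)W_p,W_q)$ and noting that the two bracketed Gram expressions that appear coincide, I obtain $\chi_i'(u_i)=\chi_p'(u_p)$ whenever $g(W_i,W_p)^2<|W_i|^2|W_p|^2$, which holds for $i\neq p$ by Cauchy--Schwarz. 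As a function of $u_i$ equal to a function of the independent variable $u_p$ must be constant, all the $\chi_i'$ equal one common constant $c$; since the base is totally geodesic, the Gauss equation gives $R^B(X,Y)Z=c\bigl(g(X,Z)Y-g(Y,Z)X\bigr)$, so $B$ has constant curvature $-c$.

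Finally I would fix the sign and globalize. The leaf $B$, being totally geodesic in the complete $M$, is complete, and is simply connected once the global product is in hand, so it is $\mathbb{R}^m$, $\mathbb{H}^m$, or $S^m$. The spherical case $c<0$ is excluded because $|W_1|^2=F_1(u_1)$ with $F_1'=2\chi_1=2(cu_1+b_1)$ is a downward parabola, forcing $u_1$ into a bounded range and $W_1=\nabla u_1$ to vanish at the endpoints, contradicting that $V_1$ (hence $W_1$) is nowhere zero; thus $c\geq 0$ and $h$ is Euclidean ($c=0$) or hyperbolic ($c>0$). The global diffeomorphism $M\cong\mathbb{R}^m\times L$ and the global warped metric $g=h+e^{2\varphi}g_L$ then follow from completeness and simple connectivity by the standard globalization of a complementary pair of totally geodesic and spherical foliations (equivalently, by iterating Proposition~\ref{Prop:f-ParallelField}, each step splitting off a genuine line rather than a circle or capped factor). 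I expect the main obstacle to be exactly this base rigidity: proving constant curvature and ruling out the sphere, since the reduction to concircular fields and the local decomposition are routine once Proposition~\ref{Prop:f-ParallelField} is in place.
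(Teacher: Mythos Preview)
Your argument is correct and follows a genuinely different route from the paper's. The paper proceeds by induction on $m$: it peels off one $\alpha$-parallel field via Proposition~\ref{Prop:f-ParallelField}, writes $g=dr^2+e^{2\psi(r)}g_N$, checks by direct computation (equations \eqref{TopPar1}--\eqref{TopPar2}) that the remaining fields project to $\alpha_N$-parallel fields on $N$, and then invokes the inductive hypothesis. For the constant curvature of the base the paper splits into two cases: for $m>2$ it uses that $R^{\alpha}$ annihilates parallel fields, hence the $\nabla^{h,d\overline\psi}$ connection on $\mathbb{R}^m$ is flat, and concludes by Schur's lemma; for $m=2$ Schur is unavailable, so a separate explicit ODE calculation is carried out. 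The topology $\mathbb{R}^m$ (and the exclusion of the sphere) comes for free from the inductive $\mathbb{R}$-splittings.

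You instead treat all $m$ concircular fields $W_i=e^{-\varphi}V_i$ at once, identify $\mathcal{D}=\operatorname{span}(W_i)$ as a complete totally geodesic foliation with umbilic complement, and prove constant curvature of the base uniformly for every $m\ge 2$ via the identity $R(X,Y)W_i=\chi_i'(u_i)\bigl(g(W_i,X)Y-g(W_i,Y)X\bigr)$ together with the pair symmetry of $R$; this avoids both Schur's lemma and the separate $m=2$ computation. The price is that you must rule out the spherical base by hand (your parabola argument for $|W_1|^2$), whereas the paper gets this from the iterated $\mathbb{R}$-splitting. One point you pass over quickly but which is needed: the passage from ``totally geodesic $\mathcal{D}$, umbilic $\mathcal{D}^\perp$'' to a \emph{warped} (not merely twisted) product requires the mean curvature vector $-(\nabla\varphi)^{\mathcal{D}}$ to be basic; this follows because $G_{ij}=g(W_i,W_j)$ and $\chi_i=\chi_i(u_i)$ are constant along $\mathcal{D}^\perp$, which you have implicitly but should state.
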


\begin{proof}
We proceed by induction on $m$.  The case $m=1$ is handled by  Proposition~\ref{Prop:f-ParallelField}.  Now suppose that $(M,g,f)$ has $m$-linearly independent fields.  Let $P$ be an $\alpha$-parallel field and complete $P$ to a basis for the space parallel fields, $\{P, Q_1, Q_2, \dots, Q_{m-1} \}$.  

By Proposition~\ref{Prop:f-ParallelField} we can write the metric $g = dr^2 + e^{2\psi(r)} g_N$ with $\varphi = \psi + \varphi_N$ and $P = C e^{2\psi} e^{\varphi_N} \frac{\partial}{\partial r}$. 

Write $Q_i = a_i(r,x) \frac{\partial}{\partial r} + \sum_{j=1}^{n-1}b_{ij}(r,x)  E_j$ where $E_j$ form a local basis of vector fields on $N$. Then
\begin{align} \label{TopPar1}
  \nabla^\alpha_{\frac{\partial}{\partial r}} Q_i &= \frac{\partial a_i}{\partial r}\frac{\partial}{\partial r} + \sum_{j=1}^{n-1} \left(\frac{\partial b_{ij}}{\partial r}E_j + b_{ij} \frac{\partial \psi}{\partial r}E_j \right) - d\varphi\left( \frac{\partial}{\partial r} \right) Q_i - d\varphi(Q_i) \frac{\partial}{\partial r}  \nonumber\\
  &= \left( \frac{\partial a_i}{\partial r} - 2 a_i \frac{\partial \psi}{\partial r} -  \sum_{j=1}^{n-1} b_{ij} D_{E_j} \varphi_N \right) \frac{\partial}{\partial r} +  \sum_{j=1}^{n-1} \frac{\partial b_{ij}}{\partial r} E_j.
\end{align}
Setting this equal to zero, since the $E_j$ are linearly independent, shows  that all of the $b_{ij}$ are constant in the $r$ direction. Thus we can write $Q_i = a_i(r,x) \frac{\partial}{\partial r} + U_i$ where $U_i$ is a field on $N$. 

Now let $V$ be a field on $N$, then 
\begin{align} \label{TopPar2}
  \nabla^\alpha_{V} Q_i &= (D_V a_i) \frac{\partial}{\partial r} + a_i \frac{\partial \psi}{\partial r} V + \nabla^N_V U_i - e^{2\psi} \frac{\partial \psi}{\partial r}g_N(U_i,V) \frac{\partial}{\partial r}- d\varphi(V) Q - d\varphi(Q) V  \nonumber\\
  &= \left( D_V a_i - (D_V \varphi) a_i - e^{2\psi} \frac{\partial \psi}{\partial r}g_N(U_i,V) \right) \frac{\partial}{\partial r} + \nabla_V^{\varphi_N} U_i. 
\end{align}
Therefore, for $Q_i$ to be parallel we need $U_i$ to be an $\alpha_N$-parallel field on $(N, g_N)$, with $\alpha_N = d\varphi_N$.  

Then  the set $\{U_i\}_{i=1}^{m-1}$ is a set of $m-1$ linearly independent $\alpha_N$-parallel fields on $g_N$.  Applying the induction hypothesis gives us that $N$ is diffeomorphic to $\mathbb{R}^{m-1} \times L$,  $g_N = h' + e^{2\varphi_N} g_{L}$, $\varphi_N = \rho + \varphi_L$.  Then we have $M = \mathbb{R} \times N = \mathbb{R}^m \times L$, $\varphi  = \psi + \rho + \varphi_L = \overline{\psi} + \varphi_L$  and 
\begin{align*}
  g &= dr^2 +  e^{2\psi(r)}\left( h' + e^{2\varphi_N} g_L\right) \\
  &= dr^2 +e^{2\psi(r)} h' + e^{2\varphi}g_L \\
  &= h +e^{2\varphi} g_L,
\end{align*}
where $h =  dr^2 +e^{2\psi(r)} h'$ and  $\overline{\psi} = \psi+ \rho$.  

Now we wish to show that $h$ is constant curvature and thus a flat or hyperbolic metric.   We consider the cases $m=2$ and $m>2$ separately. 

First assume that $m>2$.  Note that the $m$ linearly independent $\alpha$-parallel fields are all  tangent to the $\mathbb{R}^m$ factor. The fields  themselves might not be fields on $\mathbb{R}^m$, however, for any parallel field $P$ on $M$ one has 
\[
  R^{\alpha}(X,Y)P = \nabla^{\alpha}_X\left(\nabla^{\alpha}_YP\right) - \nabla^{\alpha}_Y\left(\nabla^{\alpha}_X P\right) -  \nabla^{\alpha}_{[X,Y]}P = 0.
\]
Since $R^{\alpha}(X,Y)Z$ is a tensor, this shows that for any vectors $v, w, z$ that are tangent to the $\mathbb{R}^m$ factor,  $R^{\alpha}(v,w) z = 0$.  The fact that  $\varphi = \overline{\psi} + \varphi_L$,  combined with  equation  \eqref{twistprod1}, shows that $R^{\alpha}(v,w)z = R^{h, d\overline{\psi}}(v,w)z$, where $R^{h, d\overline{\psi}}$ denotes the curvature of the connection $\nabla^{h, d\overline{\psi}}$ on $\mathbb{R}^m$.  

This implies that  the  connection $\nabla^{h, d\overline{\psi}}$ is flat. In particular, 
\[
  0 = h(R^{d\overline{\psi}}(v,w)w,v) = \sec(v,w) + \Hess \overline{\psi}(w,w) + d\overline{\psi}(w)^2.
\]
Since $\sec(v,w) = \sec(w,v)$, for any two vectors $v,w$ at a fixed point $p$, $\Hess \overline{\psi}(v,v) + d\overline{\psi}(v)^2 = \Hess \overline{\psi}(w,w) + d\overline{\psi}(w)^2$.  But this implies that $\sec_p = c(p)$ for some function $c$.  By Schur's Lemma we have that $(\mathbb{R}^m, h)$ is constant curvature, and thus must be a hyperbolic or Euclidean metric. Also see Proposition 1.1 of \cite{WylieSec}. 

When $m=2$  we can show directly that the metric $(\mathbb{R}^2, h)$ has constant curvature using equations \eqref{TopPar1} and \eqref{TopPar2}. Let $(M,g,f)$ have two linearly independent $\alpha$-parallel fields, $P$ and $Q$.  Then we have that the metric is of the form
\begin{align*}
  g &= dr^2 + e^{2 \psi(r)}\left( ds^2 + e^{2\rho(s)}g_L \right) \\
  &=  (dr^2 + e^{2 \psi(r)}ds^2) + e^{2 \psi(r)}e^{2\rho(s)}g_L\\
  &= h + e^{2 \psi(r)}e^{2\rho(s)}g_L,
\end{align*}
where $\varphi = \psi(r) + \rho(s) + \varphi_L$ and $P = Ce^{2\psi(r)}e^{\rho(s)}e^{\varphi_L} \frac{\partial}{\partial r}$ and $Q = a \frac{\partial}{\partial r} + B e^{2\rho(s)}e^{\varphi_L} \frac{\partial}{\partial s}$ and $B$ and $C$ are constants.  By rescaling $P$ and $Q$, we can assume that $B=C=1$.

Equations \eqref{TopPar1} and \eqref{TopPar2} then imply that we have the equations
\begin{align}
\label{2fields1} \frac{\partial a}{\partial r} - 2a \frac{d\psi}{dr} &= e^{2\rho(s)}e^{\varphi_L} \frac{d\rho}{d s} \\
\label{2fields2} \frac{\partial a}{\partial s}- a \frac{d \rho}{d s} &= e^{2\psi(r)}e^{2\rho(s)} e^{\varphi_L} \frac{d \psi}{d r}\\
\nonumber  D_V a - (D_V \varphi_L) a &= 0,
\end{align}
where $V$ is any field on $L$. 

Differentiating \eqref{2fields1} with respect to $s$ and \eqref{2fields2} with respect to $r$ and then subtracting the result gives the equation
\[
 -2 \frac{\partial a}{\partial s} \frac{d \psi}{d r} +  \frac{\partial a}{\partial r} \frac{d \rho}{ds} = e^{\varphi_L} \left(\frac{d}{ds}\left( e^{2\rho(s)} \frac{d \rho}{d s} \right) - e^{2\rho}\frac{d}{dr} \left( e^{2\psi(r)} \frac{d \psi}{d r} \right) \right).
\]
Plugging in \eqref{2fields1} and \eqref{2fields2}  on the left hand side and expanding the derivatives on the right gives the equation
\[
  \frac{d^2 \psi}{d r^2}  e^{2\psi} = \frac{d ^2\rho}{d s^2} + \left( \frac{d\rho}{ds} \right)^2. 
\]
Since the left hand side depends only on $r$ and the right hand side only on $s$, we get that these quantities must be constant, call the constant $j$.

Let $u(r) = \int_0^r e^{-2\psi(t)} dt$ and let $'$ denote derivative with respect to $r$.  Then, integrating  the equation $\psi'' e^{2\psi}=j$ gives us 
\[
  \psi' = j u + k
\]
for a constant $k$.  Moreover,  $\ln\left(u'\right) = -2\psi(r)$ implies that $\psi' =\frac12 \frac{u''}{u'} $ so in terms of $u$ we have

\begin{align}
  -\frac12 \frac{u''}{u'} &= j u + k  \nonumber\\
  u'' &= -2ju u' -2k u'  \nonumber\\
  u' &= -j u^2 -2ku + l,  \label{ueqn}
\end{align}
where $l$ is another (positive) constant. 

Computing the sectional curvature for the metric $h$,  $\sec\left( \frac{d}{dr}, \frac{d}{ds}\right)$, we obtain
\begin{align*}
  \sec\left( \frac{d}{dr}, \frac{d}{ds}\right) &= - \frac{ \frac{d^2}{dr^2} e^{\psi}}{e^{\psi}} \\
  &= -\frac{d^2 \psi}{dr^2} - \left( \frac{d\psi}{dr}\right)^2 \\
  &= -j u' - (ju+k)^2 \\
  &= -(k^2 + jl).
\end{align*}

Hence, $\sec$ is constant as claimed.
\end{proof}

\begin{remark}
The simplest non-flat example in Theorem~\ref{TopParFields} comes from taking $j=0$, $\rho = 0$ and  $\psi(r) = kr$, then we have an example of the form
\[
g = dr^2 + e^{2kr}(ds_1^2 + \cdots +  ds^2_{m-1}) + e^{2kr} g_L,
\]
where $h=dr^2 + e^{2kr}(ds_1^2 + \cdots +  ds^2_{m-1}) $ is the hyperbolic metric of curvature $-k^2$. 
\end{remark}

\begin{remark}
Theorem~\ref{TopParFields} together with Proposition~\ref{Prop:f-ParallelField} yield Theorem~\ref{Thm:Parallel}.
\end{remark}

Since the Riemannian metric $g$ is not parallel with respect to $\nabla^\alpha$, the dual $1$-form to a parallel vector field will not be parallel. However, we do obtain the following relation between them:

\begin{proposition}
A 1-form $\omega = e^{-\varphi}g(X,\cdot)$ is $\alpha$-parallel on $(M,g,f)$ iff $X$ is $(-\alpha)$-parallel on $(M,e^{-2\varphi}g,-f)$.
\end{proposition}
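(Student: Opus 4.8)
The plan is to reduce both sides of the asserted equivalence to the single first-order condition
\[
  \nabla_U X + g(X,U)\nabla\varphi = 0 \qquad \text{for all } U,
\]
where $\nabla$ is the Levi-Civita connection of $g$. The essential tool is the behavior of the weighted connection under a conformal change of the background metric, together with the Leibniz rule for $\nabla^\alpha$ acting on one-forms.

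First I would treat the right-hand side. Write $\bar g = e^{-2\varphi}g$ and let $\bar\nabla$ be its Levi-Civita connection. Since $-f = -(n-1)\varphi$, the one-form attached to the triple $(M,\bar g,-f)$ is $d(-\varphi) = -\alpha$, so ``$X$ is $(-\alpha)$-parallel'' unwinds to $\bar\nabla_U X + \alpha(U)X + \alpha(X)U = 0$. I would then substitute the standard conformal formula, which for $\bar g = e^{2\sigma}g$ reads $\bar\nabla_U V = \nabla_U V + (U\sigma)V + (V\sigma)U - g(U,V)\nabla\sigma$; here $\sigma = -\varphi$, hence $\nabla\sigma = -\nabla\varphi$ and $U\sigma = -\alpha(U)$, giving
\[
  \bar\nabla_U V = \nabla_U V - \alpha(U)V - \alpha(V)U + g(U,V)\nabla\varphi.
\]
The terms $-\alpha(U)X-\alpha(X)U$ then cancel against $+\alpha(U)X+\alpha(X)U$, so the $(-\alpha)$-parallel condition collapses exactly to $\nabla_U X + g(X,U)\nabla\varphi = 0$.

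Next I would treat the left-hand side directly. From $\nabla^\alpha_U V = \nabla_U V - \alpha(U)V - \alpha(V)U$ and the Leibniz rule one obtains, for any one-form $\omega$, the formula $(\nabla^\alpha_U\omega)(V) = (\nabla_U\omega)(V) + \alpha(U)\omega(V) + \alpha(V)\omega(U)$. Inserting $\omega = e^{-\varphi}g(X,\cdot)$ and using metric compatibility of $\nabla$ to write $\nabla_U\omega = -e^{-\varphi}\alpha(U)g(X,\cdot) + e^{-\varphi}g(\nabla_U X,\cdot)$, the two terms proportional to $\alpha(U)g(X,V)$ cancel, leaving
\[
  (\nabla^\alpha_U\omega)(V) = e^{-\varphi}\bigl[g(\nabla_U X,V) + \alpha(V)g(X,U)\bigr].
\]
Rewriting $\alpha(V) = g(\nabla\varphi,V)$ identifies the bracket with $g\bigl(\nabla_U X + g(X,U)\nabla\varphi,\,V\bigr)$, so $\omega$ is $\alpha$-parallel precisely when $\nabla_U X + g(X,U)\nabla\varphi = 0$.

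Since both conditions reduce to the same equation, the equivalence follows immediately. I do not expect a genuine conceptual obstacle; the only delicate point is sign bookkeeping—getting $\nabla\sigma = -\nabla\varphi$ correct in the conformal formula and verifying the two independent cancellations. I would guard against slips by checking the identity against a concrete warped-product example, such as the fields produced in Proposition~\ref{Prop:f-ParallelField}.
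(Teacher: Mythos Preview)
Your proof is correct and follows essentially the same computation as the paper's proof. The paper also expands $(\nabla^\alpha_U\omega)(V)$ to obtain $e^{-\varphi}\bigl[g(\nabla_U X,V)+d\varphi(V)g(X,U)\bigr]$ and then shows this equals $e^{-\varphi}g(\widetilde\nabla^{-\alpha}_U X,V)$ via the conformal change formula; the only cosmetic difference is that you explicitly isolate the intermediate condition $\nabla_U X + g(X,U)\nabla\varphi = 0$, whereas the paper matches the two expressions directly.
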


\begin{proof}
First consider
\begin{align*}
  \left(\nabla_U^\alpha \omega\right)(V) &= D_U \omega(V) - \omega\left(\nabla_U^\alpha V\right)\\
  &= D_U \left[ e^{-\varphi} g(X, V) \right] - e^{-\varphi} g\left(X, \nabla_U^\alpha V\right)\\
  &= -d\varphi(U) e^{-\varphi}g(X,V) + e^{-\varphi} D_U g(X, V) - e^{-\varphi} g\left(X,\nabla_U V\right)\\
  &\qquad\qquad+ d\varphi(U) e^{-\varphi} g(X, V) + d\varphi(V) e^{-\varphi} g(X, U)\\
  &= e^{-\varphi} g\left(\nabla_U X, V\right) + d\varphi(V) e^{-\varphi} g(X, U).
\end{align*}

Next, let $\widetilde{\nabla}$ denote the Levi-Civita connection of $(M,e^{-2\varphi}g)$, and $\widetilde{\nabla}^{-\alpha}$ the connection for the manifold with density $(M, e^{-2\varphi}g, -f)$. Then,
\begin{align*}
  e^{-\varphi}g\left(\widetilde{\nabla}_U^{-\alpha} X, V\right) &= e^{-\varphi} g\left(\widetilde{\nabla}_U X, V\right) + d\varphi(U) e^{-\varphi} g(X, V) + d\varphi(X) e^{-\varphi} g(U, V)\\
  &= e^{-\varphi} g\left(\nabla_U X, V\right) - d\varphi(U) e^{-\varphi} g(X, V) - d\varphi(X) e^{-\varphi} g(U, V)\\
  &\qquad\qquad+ d\varphi(V) e^{-\varphi} g(X,U) + d\varphi(U) e^{-\varphi} g(X, V)\\
  &\qquad\qquad+ d\varphi(X) e^{-\varphi} g(U, V)\\
  &= e^{-\varphi} g\left(\nabla_U X, V\right) + d\varphi(V) e^{-\varphi} g(X, U)\\
  &= \left(\nabla_U^\alpha \omega\right)(V).
\end{align*}
\end{proof}

\subsection{The de Rham splitting theorem for weighted Riemannian manifolds} 

We now consider the more general question of when we have a distribution which is $\alpha$-parallel translation invariant.  Let  $\nu$ be an $\alpha$-parallel translation invariant distribution of a manifold $(M,g)$, where $\alpha=d\varphi$.  Let  $\nu^{\perp}$ be the orthogonal complement of $\nu$.  Our first result is the following Lemma. 

\begin{lemma}
$\nu$ and $\nu^{\perp}$ are both integrable distributions and $\nu$ is totally geodesic. 
\end{lemma}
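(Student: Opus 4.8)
The plan is to reduce everything to the structural identity $\nabla_X Y = \nabla^\alpha_X Y + d\varphi(X)Y + d\varphi(Y)X$, together with the operational characterization of invariance: $\nu$ is invariant under $\nabla^\alpha$-parallel translation if and only if $\nabla^\alpha_X Y$ is tangent to $\nu$ whenever $Y$ is tangent to $\nu$, for every $X$.

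First I would show that $\nu$ is totally geodesic (for the Levi-Civita connection of $g$). Given $X,Y$ tangent to $\nu$, the identity above writes $\nabla_X Y$ as the sum of $\nabla^\alpha_X Y$, which lies in $\nu$ by invariance, and $d\varphi(X)Y$ and $d\varphi(Y)X$, which are pointwise scalar multiples of $Y$ and $X$ and hence also lie in $\nu$. Thus $\nabla_X Y$ is tangent to $\nu$, so the leaves have vanishing second fundamental form. Integrability of $\nu$ is then immediate: it follows either from Proposition~\ref{Prop:FundPrinc2} (since $\nabla^\alpha$ is torsion free), or directly from the totally geodesic property via $[X,Y] = \nabla_X Y - \nabla_Y X \in \nu$.

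The substantive step is the integrability of $\nu^\perp$, since for a general invariant distribution of a connection the orthogonal complement need not be integrable. I would fix $V,W$ tangent to $\nu^\perp$ and $Y$ tangent to $\nu$, and test $[V,W]$ against $\nu$. Using metric compatibility of $\nabla$ and the orthogonality $g(V,Y)=g(W,Y)=0$, one rewrites $g([V,W],Y) = -g(W,\nabla_V Y) + g(V,\nabla_W Y)$. Substituting the structural identity into $\nabla_V Y$ and $\nabla_W Y$, the $\nabla^\alpha$-terms drop out because $\nabla^\alpha_V Y$ and $\nabla^\alpha_W Y$ are tangent to $\nu$ and hence $g$-orthogonal to $V$ and $W$; the terms $d\varphi(V)Y$ and $d\varphi(W)Y$ vanish against $V$ and $W$ by orthogonality as well; and the only survivor from each expression is $d\varphi(Y)\,g(V,W)$. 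These two contributions enter with opposite signs and cancel, giving $g([V,W],Y)=0$. Hence $[V,W]$ is tangent to $\nu^\perp$, so $\nu^\perp$ is integrable.

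The crux — and the only place where more than formal bookkeeping is required — is the cancellation in this last step. The deformation terms defining $\nabla^\alpha$ are symmetric in their two arguments and are scalar multiples of the input vectors, so the $\nu$-component of $[V,W]$, which is the obstruction to integrability of $\nu^\perp$, comes out symmetric in $V$ and $W$ and therefore vanishes in the antisymmetric bracket. This is precisely the special feature of $\nabla^\alpha$ that a generic torsion-free connection with an invariant distribution does not enjoy, and it is what makes both the totally geodesic and the orthogonal-integrability conclusions available simultaneously.
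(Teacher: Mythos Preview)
Your proof is correct and follows essentially the same approach as the paper. Both arguments rest on the same three ingredients: torsion-freeness of $\nabla^\alpha$ for integrability of $\nu$, the structural identity $\nabla_X Y = \nabla^\alpha_X Y + d\varphi(X)Y + d\varphi(Y)X$, and the invariance $\nabla^\alpha_X Y \in \nu$ whenever $Y \in \nu$; the paper packages the computation through an auxiliary identity involving $e^{-\varphi}g$, while you work directly, but the key cancellation for integrability of $\nu^\perp$ is identical.
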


\begin{proof}
Since $\nabla^\alpha$ is a torsion free connection, $\nu$ is integrable. For $X, Y, Z$ vector fields on $M$ we have
\begin{align}
\nonumber e^{-\varphi} g( \nabla_X^\alpha Y, Z ) &= e^{-\varphi} g(\nabla_XY, Z) - d\varphi(X) e^{-\varphi} g(Y,Z) - d\varphi(Y) e^{-\varphi} g(X,Z) \\
\label{de Rham Eqn} &= g( \nabla_X(e^{-\varphi} Y), Z) - d\varphi(Y) e^{-\varphi} g(X,Z) \\
\nonumber &= D_X (e^{-\varphi}g(Y,Z)) - e^{-\varphi} g( Y, \nabla_X Z) - d\varphi(Y) e^{-\varphi} g(X,Z) .
\end{align}

First choose $X, Y$ fields in $\nu$ and $Z \in \nu^{\perp}$, then \eqref{de Rham Eqn} gives that $g( Y, \nabla_X Z) = 0$, implying that $\nu$ is totally geodesic.

On the other hand if we apply \eqref{de Rham Eqn} to $X,Y \in \nu^{\perp}$ and $Z \in \nu$ we have 
\begin{align*}
  g(\nabla^\alpha_X Y, Z) &= - g(Y, \nabla_X Z) \\
  &= -d\varphi(Z) g(X, Y).
\end{align*}
So, $g([X,Y], Z) = g(\nabla^\alpha_X Y, Z) - g(\nabla^\alpha_Y X, Z) = 0$, and thus $\nu^{\perp}$ is also integrable.
\end{proof}

Since $\nu$ and $\nu^{\perp}$ are both integrable we have  local topological product structure in a coordinate neighborhood $U$ which is diffeomorphic to $B \times F$ where $B \times \{x\}$ are the integral submanifolds of $\nu$ and $\{p\} \times F$ are the integral submanifolds of $\nu^{\perp}$.  In these coordinates, any vector can be written uniquely as $X = Y+Z$ where $Y\in \nu$ and $Z\in \nu^{\perp}$.  Define a new perturbation of the metric $g$ by the formula 
\[
k(X_1, X_2) = g(Y_1, Y_2) + e^{-2\varphi}g(Z_1, Z_2),
\]
where $X_i = Y_i+Z_i$, $Y_i \in \nu_p$ and $Z_i \in \nu^{\perp}_p$.  $k$ is a smooth metric on $M$. Then we have the following lemma. 

\begin{lemma}
$\nu$ is invariant with respect to the Riemannian connection $\nabla^{k}$. 
\end{lemma}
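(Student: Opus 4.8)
The plan is to deduce that $\nu$ is $\nabla^{k}$-parallel by proving that \emph{both} $\nu$ and $\nu^{\perp}$ are totally geodesic for the metric $k$, and then invoking the standard fact that, for a Riemannian connection, two orthogonal complementary distributions that are each totally geodesic are automatically parallel. I would make this reduction self-contained as follows. First note that $\nu$ and $\nu^{\perp}$ stay $k$-orthogonal, since $k(Y,Z)=0$ whenever $Y\in\nu$ and $Z\in\nu^{\perp}$. Then for such $Y,Z$ and arbitrary $X$, metric compatibility of $\nabla^k$ gives $k(\nabla^{k}_{X}Y,Z)=-k(Y,\nabla^{k}_{X}Z)$; splitting $X=X_{1}+X_{2}$ into its $\nu$- and $\nu^{\perp}$-parts, the term $k(\nabla^{k}_{X_{1}}Y,Z)$ vanishes because $\nu$ is $k$-totally geodesic, and the term $k(Y,\nabla^{k}_{X_{2}}Z)$ vanishes because $\nu^{\perp}$ is $k$-totally geodesic, forcing $\nabla^{k}_{X}Y\in\nu$.

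Next I would extract the one pointwise identity the hypothesis provides. Since $\nu$ is invariant under $\nabla^{\alpha}$-parallel transport, $\nabla^{\alpha}_{X}Y\in\nu$ for every $X$ and every section $Y$ of $\nu$. Writing $\nabla^{\alpha}_{X}Y=\nabla_{X}Y-d\varphi(X)Y-d\varphi(Y)X$ and pairing with $Z\in\nu^{\perp}$ yields
\[
  g(\nabla_{X}Y,Z)=d\varphi(Y)\,g(X,Z)\qquad(Y\in\nu,\ Z\in\nu^{\perp},\ X\ \text{arbitrary}),
\]
and, differentiating $g(Y,Z)=0$, the dual identity $g(\nabla_{X}Z,Y)=-d\varphi(Y)\,g(X,Z)$. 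Taking $X\in\nu$ in the first identity recovers that $\nu$ is $g$-totally geodesic.

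The two totally geodesic statements then follow from the Koszul formula for $\nabla^{k}$. For $X,Y\in\nu$ and $Z\in\nu^{\perp}$, every surviving pairing reduces to a $g$-pairing (the weight $e^{-2\varphi}$ multiplies only $k(X,Z)=k(Y,Z)=0$, and $[X,Y]\in\nu$ by integrability), so the Koszul expression for $2k(\nabla^{k}_{X}Y,Z)$ is identical to the one for $2g(\nabla_{X}Y,Z)$, which is $0$ since $\nu$ is $g$-totally geodesic; hence $\nu$ is $k$-totally geodesic. For $X,Y\in\nu^{\perp}$ and $W\in\nu$ the computation is the substantive one: now $k(X,Y)=e^{-2\varphi}g(X,Y)$, and differentiating this in the $W$ direction produces an extra term $2\,d\varphi(W)\,e^{-2\varphi}g(X,Y)$, while the remaining bracket reproduces $2e^{-2\varphi}g(\nabla_{X}Y,W)$, so that
\[
  2k(\nabla^{k}_{X}Y,W)=e^{-2\varphi}\bigl(2\,d\varphi(W)\,g(X,Y)+2\,g(\nabla_{X}Y,W)\bigr).
\]
By the dual identity $g(\nabla_{X}Y,W)=-d\varphi(W)\,g(X,Y)$, the bracket vanishes.

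The main obstacle — and the entire reason for the definition of $k$ — is this last cancellation. The dual identity says that $\nu^{\perp}$ is totally umbilic in the metric $g$ with mean-curvature form $-d\varphi$, and $e^{-2\varphi}$ is precisely the conformal weight on the $\nu^{\perp}$ factor whose logarithmic derivative $-2\,d\varphi(W)$ exactly offsets this umbilicity; any other power would leave a residual term and $\nu^{\perp}$ would fail to be $k$-totally geodesic. Once both $\nu$ and $\nu^{\perp}$ are $k$-totally geodesic, the reduction of the first paragraph completes the proof that $\nu$ is invariant under $\nabla^{k}$.
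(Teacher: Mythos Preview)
Your proof is correct and follows essentially the same route as the paper: both arguments compute $k(\nabla^{k}_{X}Y,Z)$ via the Koszul formula, split into the cases $X\in\nu$ and $X\in\nu^{\perp}$, and use in the second case the identity $g(\nabla_{X}Y,Z)=d\varphi(Y)\,g(X,Z)$ coming from $\nabla^{\alpha}$-invariance of $\nu$. The only difference is organizational: you package the two Koszul computations as ``$\nu$ and $\nu^{\perp}$ are each $k$-totally geodesic'' and then invoke the reduction of your first paragraph, whereas the paper computes $k(\nabla^{k}_{X}Y,Z)$ directly for $Y\in\nu$, $Z\in\nu^{\perp}$ in the two cases and recognizes the $X\in\nu^{\perp}$ case as $2e^{-2\varphi}g(\nabla^{\alpha}_{X}Y,Z)=0$ --- your second computation is the metric-dual of theirs.
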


\begin{proof}
Let $Y$ be a field in $\nu$ and $Z \in \nu^{\perp}$.  Then, by the Kozul formula, for any vector field $X$,  
\begin{align}
\label{Kozul} 2 k(\nabla^k_X Y, Z) &= D_Y\left(k(Z,X)\right) - D_Z \left(k(X,Y) \right) \\
\nonumber &\qquad + k\left([X,Y],Z\right) - k\left([Y,Z],X\right) + k\left([Z,X],Y\right).
\end{align}
Letting $X \in \nu$, and using that $\nu$ is totally geodesic and integrable, gives
\begin{align*}
  2 k\left(\nabla^k_X Y, Z\right) &= - D_Z \left(k(X,Y) \right) - k\left([Y,Z],X\right) + k\left([Z,X],Y\right)\\
  &= - D_Z \left(g(X,Y) \right) - g\left([Y,Z],X\right) + g\left([Z,X],Y\right)\\
  &= 2g\left(\nabla^g_X Y, Z\right) \\
  &= 0.
\end{align*}
On the other hand, if we let $X \in \nu^{\perp}$ in \eqref{Kozul} and use that $\nu^{\perp}$ is integrable we obtain
\begin{align*}
  2 k(\nabla^k_X Y, Z) &= D_Y\left(k(Z,X)\right) + k\left([X,Y],Z\right) - k\left([Y,Z],X\right)  \\
  &=  D_Y\left(e^{-2\varphi}g(Z,X)\right) + e^{-2\varphi}g\left([X,Y],Z\right) - e^{-2\varphi}g\left([Y,Z],X\right) \\
  &= e^{-2\varphi}\left(  -2 d\varphi(Y) g(Z,X) +D_Y(g(X,Z)) +  g\left([X,Y],Z\right) -  g\left([Y,Z],X\right)\right)\\
  &= 2e^{-2\varphi}\left( - g(Z, d\varphi(Y)X) + g(\nabla^g_X Y, Z) \right) \\
  &= 2e^{-2\varphi}\left( g(\nabla^{g, \alpha}_X Y, Z) \right)\\
  &= 0.
\end{align*}
Therefore, $\nabla^k_X Y \in \nu$ for all $X$.
\end{proof}

This gives us the following local version of the de Rham decomposition theorem.
 
\begin{theorem}\label{Thm:Local deRham}
Let $(M,g)$ be a Riemannian manifold and $\alpha$ a closed one-form on $M$.  Let $\nu$ be an $\alpha$-parallel translation invariant distribution, then $M$ is locally a twisted product.   That is all $p \in M$ admit an open neighborhood $U_p$ such that $U$ is diffeomorphic to $B \times F$, where $\nu$ is the tangent distribution to $B \times \{ x\}$, and $g = g_B + e^{2\varphi} g_F$ for fixed metrics $g_B$ and $g_F$.  
\end{theorem}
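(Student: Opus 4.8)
The plan is to deduce the splitting of $g$ from the classical Riemannian de Rham local decomposition theorem applied to the auxiliary metric $k$ constructed above. The two preceding lemmas provide exactly what is needed: $\nu$ and $\nu^{\perp}$ are integrable with $\nu$ totally geodesic, and $\nu$ is invariant under $\nabla^k$-parallel transport, where $\nabla^k$ is the Levi-Civita connection of $k$.

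First I would check that the $g$-orthogonal complement $\nu^{\perp}$ coincides with the $k$-orthogonal complement of $\nu$: by the defining formula for $k$, a vector $Y \in \nu$ and a vector $Z \in \nu^{\perp}$ contribute no mixed term, so $k(Y,Z)=0$. Since $\nabla^k$ is a metric connection, its parallel transport is a $k$-isometry and therefore preserves $k$-orthogonal complements; combined with the second lemma this shows that $\nu^{\perp}$ is also $\nabla^k$-parallel. Thus $(M,k)$ carries a pair of complementary, integrable, $\nabla^k$-parallel distributions, so its holonomy is reducible.

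I would then invoke the local de Rham theorem (see, e.g., \cite{Besse} or \cite{KobayashiNomizu}) for $(M,k)$: every point $p$ admits a neighborhood $U_p$ diffeomorphic to $B \times F$, with $\nu$ tangent to the $B$-factor and $\nu^{\perp}$ tangent to the $F$-factor, on which $k$ is a Riemannian product $k = g_B + g_F$ for fixed metrics $g_B$ on $B$ and $g_F$ on $F$. Translating back to $g$ finishes the proof: on the $\nu$-directions $k$ and $g$ agree, so $g$ restricts to $g_B$; on the $\nu^{\perp}$-directions the definition of $k$ gives $g = e^{2\varphi}k$, so $g$ restricts to $e^{2\varphi} g_F$; and since the two factors are $g$-orthogonal we obtain $g = g_B + e^{2\varphi} g_F$, the claimed twisted product. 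I expect the only delicate point to be the verification in the second step that $\nu^{\perp}$ is genuinely $\nabla^k$-parallel, which is precisely what licenses the application of the Riemannian de Rham theorem; once that is in hand, the remaining identifications are routine.
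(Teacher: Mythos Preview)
Your proposal is correct and follows essentially the same route as the paper: use the two preceding lemmas to see that $\nu$ is $\nabla^k$-parallel, apply the local Riemannian de Rham decomposition to $(M,k)$ to write $k = g_B + g_F$, and then read off $g = g_B + e^{2\varphi} g_F$ from the definition of $k$. The additional checks you spell out (that the $g$- and $k$-orthogonal complements of $\nu$ coincide, and hence that $\nu^{\perp}$ is automatically $\nabla^k$-parallel since $\nabla^k$ is metric) are exactly the routine verifications the paper leaves implicit.
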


\begin{proof}
From the Lemma we have that $\nu$ is an invariant distribution for the Riemannian metric $k$. Applying the local version of the Riemannian de Rham decomposition theorem \cite{Besse}*{10.38} then gives that $k$ must locally be a product 
\[
k = g_B + g_F
\]
for fixed metric $g_B$ and $g_F$.  This then gives that $g = g_B + e^{2\varphi} g_F$.
\end{proof}

For the global splitting we also have the following result which follows directly from the global de Rham decomposition theorem in the Riemannian case \cite{Besse}*{10.43}. 

\begin{theorem}\label{Thm:GlobaldeRham}
Let $(M,g)$ be a simply connected Riemannian manifold and $\alpha$ a closed one form on $M$.  Let $\nu$ be an $\alpha$-parallel translation invariant distribution.  If the metric $k$ is complete then $M$ is a twisted product metric,  that is $M$ is  diffeomorphic to $B \times F$, where $\nu$ is the tangent distribution to $B \times \{ x\}$, and $g = g_B + e^{2\varphi} g_F$ for fixed metrics $g_B$ and $g_F$.  
\end{theorem}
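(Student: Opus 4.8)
The plan is to deduce the global splitting from the classical global de Rham decomposition theorem applied to the auxiliary metric $k$, exactly as Theorem~\ref{Thm:Local deRham} deduced the local splitting from the local version. The two preceding lemmas have already done the essential work: $\nu$ and $\nu^{\perp}$ are integrable, $\nu$ is totally geodesic, and $\nu$ is invariant under the Levi-Civita connection $\nabla^{k}$ of $k$. The first step is to observe that $\nu^{\perp}$ is likewise $\nabla^{k}$-parallel. This is automatic once we know $\nu$ is $\nabla^{k}$-parallel: since $\nabla^{k}$ is a metric connection its holonomy lies in $O(n)$, so the $k$-orthogonal complement of a parallel distribution is again parallel. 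Here the $k$-orthogonal complement of $\nu$ coincides with $\nu^{\perp}$, because $k$ was built to be block-diagonal with respect to the splitting $\nu \oplus \nu^{\perp}$.

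With both $\nu$ and $\nu^{\perp}$ parallel for $\nabla^{k}$, I would invoke the global Riemannian de Rham theorem \cite{Besse}*{10.43} for the complete, simply connected Riemannian manifold $(M,k)$. This yields a diffeomorphism $M \cong B \times F$ under which $k$ is the Riemannian product $k = g_B + g_F$, with $\nu$ the tangent distribution to the $B$-factor and $\nu^{\perp}$ the tangent distribution to the $F$-factor. This is precisely the point at which the completeness hypothesis is used, and it is completeness of $k$ rather than of $g$ that is required, since the two metrics differ by the conformal factor $e^{-2\varphi}$ on $\nu^{\perp}$ and need not be simultaneously complete.

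It remains to translate the product structure for $k$ back into the claimed twisted-product structure for $g$. By the definition of $k$, its restriction to $\nu$ agrees with $g$ and its restriction to $\nu^{\perp}$ equals $e^{-2\varphi}g$. Hence $g_B = g|_{\nu}$, while $e^{2\varphi} g_F = g|_{\nu^{\perp}}$; since both $g$ and $k$ vanish on cross terms between $\nu$ and $\nu^{\perp}$, the metric $g$ is block diagonal and we obtain $g = g_B + e^{2\varphi} g_F$, as desired.

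Since the heavy lifting---integrability, total geodesy of $\nu$, and the $\nabla^{k}$-invariance---is already established earlier in the section, there is no single hard computation left. The only point requiring care is the bookkeeping just described: confirming that the $k$-product factors are exactly the integral manifolds of $\nu$ and $\nu^{\perp}$, and that the conformal rescaling converts $k = g_B + g_F$ into $g = g_B + e^{2\varphi} g_F$ without introducing unwanted cross terms. This is where one must use that $\varphi$ enters only through the $\nu^{\perp}$-block, which is built into the definition of $k$.
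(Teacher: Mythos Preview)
Your proposal is correct and is exactly the approach the paper takes: the paper simply states that the result follows directly from the global Riemannian de Rham decomposition theorem \cite{Besse}*{10.43} applied to the complete simply connected manifold $(M,k)$, using the preceding lemma that $\nu$ is $\nabla^k$-parallel. You have merely spelled out the bookkeeping (that $\nu^{\perp}$ is the $k$-orthogonal complement of $\nu$ and hence also $\nabla^k$-parallel, and that $k=g_B+g_F$ unwinds to $g=g_B+e^{2\varphi}g_F$) which the paper leaves implicit.
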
 

It seems somewhat unclear what is the exact relationship between the completeness of the metric $g$, completeness of the connection $\nabla^{\alpha}$, and completeness of the metric $k$.  However, we do have the following proposition giving sufficient conditions for $k$ completeness. 

\begin{proposition} \label{Prop:kcomplete}
Let $(M,g)$ be a complete simply connected Riemannian manifold and $\alpha=d\varphi$ a smooth closed one form supporting an $\alpha$-parallel translation invariant distribution.  If 
\begin{enumerate}
\item $\varphi$ is bounded from above, or

\item $\nabla^{\alpha}$ is  geodesically complete, and $\nabla \varphi(p) \subset \nu(p)$,  $\forall p \in M$
\end{enumerate}
then $k$ is complete. 
\end{proposition}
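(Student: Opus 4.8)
The plan is to establish completeness of $(M,k)$ via the Hopf--Rinow theorem, by showing that closed $k$-balls are compact (equivalently, that $k$-geodesics extend for all time). Throughout I use the decomposition $X = Y + Z$ with $Y \in \nu$, $Z \in \nu^{\perp}$, the fact from the preceding Lemma that $\nu$ is $g$-totally geodesic and $\nabla^{k}$-invariant, and the defining formula for $k$, namely $k = g$ on $\nu$ and $k = e^{-2\varphi}g$ on $\nu^{\perp}$.

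For case (1) the estimate is immediate. If $\varphi \le C$ then $e^{-2\varphi} \ge e^{-2C}$, so for every $X = Y+Z$ one has $k(X,X) = g(Y,Y) + e^{-2\varphi}g(Z,Z) \ge \min\{1,e^{-2C}\}\,g(X,X)$, using $g(X,X) = g(Y,Y)+g(Z,Z)$. Writing $c^2 = \min\{1,e^{-2C}\} > 0$ gives $k \ge c^2 g$, hence $d_k \ge c\,d_g$ for the induced distance functions. Consequently each closed $k$-ball $\bar B_k(p,R)$ lies inside $\bar B_g(p,R/c)$, which is compact since $g$ is complete; being closed in the manifold topology, $\bar B_k(p,R)$ is therefore compact, and Hopf--Rinow yields completeness of $k$.

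For case (2) I would first record the geometric consequences of $\nabla\varphi \in \nu$. Since $d\varphi$ vanishes on $\nu^{\perp}$, the function $\varphi$ is constant along each $\nu^{\perp}$-leaf, so in a local product chart $B\times F$ adapted to $\nu,\nu^{\perp}$ (with $g = g_B + e^{2\varphi}g_F$ and $\varphi$ a function of $B$ alone) the metric $k = g_B + g_F$ is a genuine Riemannian product; in particular both $\nu$ and $\nu^{\perp}$ are $\nabla^{k}$-parallel, and the $\nu$-leaves, being $g$-totally geodesic in the complete metric $g$ and carrying $k|_{\nu} = g|_{\nu}$, are $k$-complete. Now take a maximal unit-speed $k$-geodesic $\sigma:[0,T)\to M$ and suppose $T<\infty$. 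Since $\nabla^{k}$-parallelism of $\nu,\nu^{\perp}$ forces $\dot\sigma^{\nu},\dot\sigma^{\perp}$ to have constant $k$-norms $c_1,c_2$ with $c_1^2+c_2^2 = 1$, we have $|\dot\sigma^{\nu}|_g = c_1$ and $|\dot\sigma^{\perp}|_g = c_2\,e^{\varphi\circ\sigma}$, whence $|\dot\sigma|_g = \sqrt{c_1^2 + c_2^2 e^{2\varphi\circ\sigma}} \le c_1 + c_2\,e^{\varphi\circ\sigma}$. Thus if $\varphi\circ\sigma$ is bounded above then $\sigma$ has finite $g$-length, so it converges as $t\to T$ by completeness of $g$ and extends past $T$, contradicting maximality; it therefore suffices to bound $\varphi$ along $\sigma$.

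The crux is exactly this bound, and it is where geodesic completeness of $\nabla^{\alpha}$ enters. The $\nu$-part of $\sigma$ has finite $g$-length $c_1T$, and since $\varphi$ depends only on the $\nu$-direction its variation along $\sigma$ is governed by the values of $\varphi$ along $g$-geodesics tangent to the totally geodesic foliation $\nu$; such geodesics lie in the complete $\nu$-leaves, and $\nabla^{\alpha}$-completeness (equivalently, divergence of $\int e^{-2\varphi}$ along every complete unit-speed $g$-geodesic, cf.\ Proposition~\ref{Prop:scomplete}) prevents $\varphi$ from blowing up over the finite $\nu$-parameter range traversed by $\sigma$. Granting this, $\varphi\circ\sigma$ is bounded on $[0,T)$, $\sigma$ has finite $g$-length, and $k$ is geodesically complete. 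I expect the genuine difficulty to be the \emph{globalization} of this comparison: the identity ``$\varphi\circ\sigma$ equals $\varphi$ along a single complete $\nu$-geodesic'' is transparent inside one product chart, but across the infinitely many charts that an escaping $\sigma$ must cross it requires controlling the holonomy of the foliation $\nu^{\perp}$ (along which one would slide $\sigma$ back to a fixed leaf without changing $\varphi$) and converting the geodesic hypothesis $\nabla^{\alpha}$-completeness into a bound along the non-geodesic $\nu$-track of $\sigma$. This delicacy is consistent with the authors' remark that the precise relationship between completeness of $g$, of $\nabla^{\alpha}$, and of $k$ is not fully understood, and that the stated hypothesis may not be optimal.
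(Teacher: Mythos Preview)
Your argument for case (1) is correct and essentially identical to the paper's.

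For case (2) you have correctly identified that there is a gap in your own argument, and the paper's route avoids it entirely. Rather than trying to bound $\varphi\circ\sigma$ and deduce finite $g$-length (which, as you note, requires converting $\nabla^{\alpha}$-completeness into a pointwise bound on $\varphi$ along a curve that is not a $g$-geodesic, a step you do not complete), the paper extends the two components of the $k$-geodesic separately. The $\nu$-component extends because the $\nu$-leaves are totally geodesic in the complete metric $g$; you already have this. For the $\nu^{\perp}$-component the paper invokes Proposition~\ref{Prop:Twisted2}: since $\nabla\varphi\in\nu$ the local splitting is a warped product with $\varphi_2=0$, hence $\alpha_2=0$, and that proposition says $\alpha$-parallel translation restricted to $TF$ coincides with $g_F$-parallel translation. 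Consequently every $\nabla^{\alpha}$-geodesic has the local form $(\eta_1,\eta_2)$ with $\eta_2$ a $g_F$-geodesic, so $\nabla^{\alpha}$-completeness forces $g_F$-geodesics to extend for all time. Both factors of $\sigma=(\sigma_1,\sigma_2)$ therefore extend, and $k$ is complete.

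The missing idea in your attempt is precisely this use of Proposition~\ref{Prop:Twisted2} to link $\nabla^{\alpha}$-geodesics directly to $g_F$-geodesics; once you have it, no bound on $\varphi$ is needed and the globalization issue you flag disappears.
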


\begin{remark}
In several of the comparison results we were able to replace the a priori assumptions of $f$ being bounded from above by the weaker condition of $\alpha$-completeness. As such, it is natural to ask whether the same weakening of hypotheses can be done in this case as well.
\end{remark}

\begin{proof}
To prove part (1), if $\varphi \leq A$ then 
\begin{align*}
  k(X_1, X_2) &= g(Y_1, Y_2) + e^{-2\varphi} g(Z_1, Z_2)\\
 &\geq g(Y_1, Y_2) + e^{-2A} g(Z_1, Z_2)\\
 &\geq B g(X_1, X_2),
\end{align*}
where $B= \min\{1, e^{-2A}\}$.  Therefore, for any curve $\sigma$, $\mathrm{length}^k (\sigma) \geq \left(\sqrt{B}\right) \mathrm{length}^g(\sigma)$, so a bounded subset in the $k$ metric must also be bounded in the $g$ metric.  Since $g$ is complete, the closed and bounded subsets with respect to $g$ must be compact.  Therefore, the same is true for $k$ and $k$ is complete by the Hopf-Rinow theorem. 
 
The proof of (2) is a little more subtle. Let  $\sigma$ be a $k$-geodesic and let $p=\sigma(t)$ for some $t$.  In a neighborhood of $p$, by Theorem~\ref{Thm:Local deRham} we can write $k$ as a product  $k=g_B+g_F$, so we can write $\sigma(t) = (\sigma_1(t), \sigma_2(t))$ where $\sigma_1$ is a geodesic in $g_B$ and $\sigma_2$ is a geodesic in $g_F$.  The geodesics in $g_B$ can always be extended because $B$ is an integral submanifold of $\nu$ which is totally geodesic distribution  in the complete metric $g$.  
 
To see that the geodesics in $g_F$ can always be extended, note that we have the local splitting $g = g_B + e^{-2\varphi}g_F$, given by Theorem~\ref{Thm:Local deRham}.  The fact that $\nabla \varphi(p) \subset \nu(p)$ tells us that $\varphi$ is a function of $B$ only and thus the metric is locally a warped product. We thus satisfy  all of the hypotheses of  Proposition~\ref{Prop:Twisted2} with  $\alpha_2 = 0$.  According to Proposition~\ref{Prop:Twisted2}, $\alpha$-parallel translation  in $g$ acts on vectors tangent to $F$ by parallel translation in $g_F$.  This implies that the $\alpha$-geodesics in $g$ are of the form $\eta(s) = (\eta_1(s), \eta_2(s))$, where $\eta_2(s)$ is a geodesic in $g_F$.  Therefore, $\alpha$-completeness implies that the geodesics of $g_F$ can be extended for all time.  
 
Therefore, the geodesic $\sigma$ can be extended for all time and thus $k$ is complete. 
\end{proof}

\begin{remark}
(1) of Proposition~\ref{Prop:kcomplete} shows that if $(M,g)$ is compact and has a $\nabla^{\alpha}$-invariant distribution, then the universal cover admits a global splitting. Thus Theorems ~\ref{Thm:Local deRham} and ~\ref{Thm:GlobaldeRham} along with Proposition~\ref{Prop:kcomplete} prove Theorem \ref{Thm:deRham}.
\end{remark}

\subsection{Compact $\alpha$-Holonomy}

An important question is what natural parallel tensors (if any) does $\nabla^\alpha$ have. We consider the special case of  when $\nabla^{\alpha}$ admits a parallel  two-tensor.  From Proposition~\ref{Prop:FundPrinc1} it follows that admitting an invariant positive definite two-tensor is equivalent to $\mathrm{Hol}^{\alpha}(g)$ being compact.   To see this note that if $\mathrm{Hol}^{\alpha}(g)$ is compact, then, by averaging over the group, one can  find a positive definite metric at a point $p$  which is invariant under $\mathrm{Hol}^{\alpha}(g)$.  Conversely, if $\nabla^{\alpha}$ admits a parallel metric then $\mathrm{Hol}^{\alpha}(g)$ must be compact as it is a closed subgroup of $O(n)$. Note that, in particular, this implies that $\nabla^{\alpha}$ is the Levi-Civita connection of some Riemannian metric if and only if $\mathrm{Hol}^{\alpha}(g)$ is compact. 

A diffeomorphism of  manifolds  equipped with  affine connections is called an \emph{affine transformation} if it preserves the connections.  More generally, a diffeomorphism is called \emph{projective transformation} or a \emph{geodesic mapping} if it maps un-paremetrized geodesics to geodesics.  The following propostition follows from Weyl's theorem. 

\begin{proposition}
Let $(M,g)$ be a Riemannian manifold. There exists a non-zero one-form $\alpha$ such that $\mathrm{Hol}^{\alpha}(M,g)$ is compact if and only if there is a non-affine geodesic mapping from $(M,g)$ to another Riemannian manifold $(N,\widetilde{g})$. 
\end{proposition}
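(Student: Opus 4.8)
The plan is to reduce the statement entirely to the dictionary between projectively equivalent connections and geodesic mappings, leaning on the equivalence established just above that $\mathrm{Hol}^{\alpha}(M,g)$ is compact if and only if $\nabla^{\alpha}$ is the Levi-Civita connection of some Riemannian metric. The two standard facts I would invoke are: a diffeomorphism is a geodesic mapping exactly when it carries the (unparametrized) geodesics of the domain connection to those of the target connection, and it is an affine transformation exactly when it carries one connection to the other; and Weyl's theorem, which says every torsion-free connection projectively equivalent to $\nabla$ equals $\nabla^{\alpha}$ for a unique one-form $\alpha$, with $\alpha=0$ precisely when the connection is $\nabla$ itself. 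With these in hand, both directions become short.

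For the forward implication, I would start from a non-zero $\alpha$ with $\mathrm{Hol}^{\alpha}(M,g)$ compact. By the equivalence recalled above, $\nabla^{\alpha}$ is then the Levi-Civita connection $\widetilde{\nabla}$ of some Riemannian metric $\widetilde{g}$ on $M$. I would then take $(N,\widetilde{g})$ to be $M$ itself equipped with $\widetilde{g}$ and consider the identity map $\iota\colon (M,g)\to(M,\widetilde{g})$. Since $\nabla^{\alpha}$ is projectively equivalent to $\nabla$, the map $\iota$ sends unparametrized $g$-geodesics to unparametrized $\widetilde{g}$-geodesics, so it is a geodesic mapping; and it is non-affine because affineness would force $\nabla=\widetilde{\nabla}=\nabla^{\alpha}$, i.e.\ $\alpha=0$, contrary to hypothesis.

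For the converse I would begin with a non-affine geodesic mapping $F\colon(M,g)\to(N,\widetilde{g})$, where $\widetilde{\nabla}$ denotes the Levi-Civita connection of $\widetilde{g}$, and pull the target connection back to $M$. The pulled-back connection $F^{*}\widetilde{\nabla}$ is precisely the Levi-Civita connection of the pullback metric $F^{*}\widetilde{g}$, since $F$ is an isometry $(M,F^{*}\widetilde{g})\to(N,\widetilde{g})$. Because $F$ is a geodesic mapping, $F^{*}\widetilde{\nabla}$ is torsion-free and has the same unparametrized geodesics as $\nabla$, hence is projectively equivalent to $\nabla$; Weyl's theorem then gives $F^{*}\widetilde{\nabla}=\nabla^{\alpha}$ for some one-form $\alpha$. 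As $\nabla^{\alpha}$ is now a Levi-Civita connection, $\mathrm{Hol}^{\alpha}(M,g)$ equals the Riemannian holonomy of $F^{*}\widetilde{g}$, which lies in the orthogonal group and is therefore compact. Finally, non-affineness of $F$ means $F^{*}\widetilde{\nabla}\neq\nabla$, so $\nabla^{\alpha}\neq\nabla$ and hence $\alpha\neq0$.

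I do not expect a genuine analytic obstacle here; the argument is an assembly of standard facts. The only points I would check with care are that the pullback of a Levi-Civita connection is again Levi-Civita (this is what produces compact holonomy in the converse direction) and the exact matching of the word \emph{non-affine} with the condition $\alpha\neq0$, which relies on the uniqueness clause in Weyl's theorem. I would also make explicit that ``another Riemannian manifold'' is allowed to be $M$ carrying a different metric, so that the identity map is an admissible witness in the forward direction.
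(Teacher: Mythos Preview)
Your argument is correct and is exactly the unpacking the paper has in mind: the paper does not give a detailed proof but simply notes that the proposition follows from Weyl's theorem together with the equivalence, established just before, that $\mathrm{Hol}^{\alpha}(M,g)$ is compact if and only if $\nabla^{\alpha}$ is the Levi-Civita connection of some Riemannian metric. Your two directions---using the identity map $(M,g)\to(M,\widetilde g)$ in the forward direction and pulling back the target Levi-Civita connection via Weyl in the converse---are precisely the intended route.
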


\begin{remark}
Note that even if $g$ is complete,  $\widetilde{g}$ need not be complete. However, we assume $(M,g, \alpha)$ is  $\alpha$-complete, then $\widetilde{g}$ is a complete metric.
\end{remark}

\begin{remark} 
If $\mathrm{Hol}^{\alpha}(M,g) = G$ is a proper subgroup of $O(n)$, it also follows that $\widetilde{g}$ has special holonomy $G$. 
\end{remark}

Understanding geodesic mappings between Riemannian manifolds  is an old problem going back to Beltrami and Levi-Civita, see for example the survey article \cite{Mikes}.  There are various rigidity theorems about when two Riemannian spaces can be related by a geodesic mapping.  For example, Beltrami showed that if there is a geodesic mapping from $(M,g)$ to a Euclidean space, then $(M,g)$ is a space of constant curvature.  Note that this is equivalent to the special case $m=n$ in Theorem~\ref{TopParFields}.

Another basic known fact about geodesic mappings is that there are no projective transformations from  $(M,g)$  to itself aside from affine transformations.  In terms of the $\alpha$-holonomy this is equivalent to saying that $g$  never parallel with respect to $\nabla^\alpha$ unless $\alpha$ is trivial. For completeness we include a proof. 

\begin{proposition}
Let $(M,g)$ be a manifold. $\nabla^\alpha g = 0$ if and only if $\alpha\equiv 0$. 
\end{proposition}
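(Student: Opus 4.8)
The plan is to compute the tensor $\nabla^{\alpha} g$ directly from the definition of $\nabla^\alpha$ and read off an explicit algebraic formula from which both directions follow immediately. Recall that for any connection the covariant derivative of $g$ is the $(0,3)$-tensor
\[
  (\nabla^{\alpha}_X g)(Y,Z) = X\bigl(g(Y,Z)\bigr) - g(\nabla^{\alpha}_X Y, Z) - g(Y, \nabla^{\alpha}_X Z).
\]
First I would substitute $\nabla^{\alpha}_X Y = \nabla_X Y - \alpha(X) Y - \alpha(Y) X$ and use that the Levi-Civita connection is metric compatible, i.e. $X(g(Y,Z)) = g(\nabla_X Y, Z) + g(Y, \nabla_X Z)$. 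The two Levi-Civita terms $g(\nabla_X Y, Z)$ and $g(Y, \nabla_X Z)$ then cancel against the corresponding terms coming from $\nabla^\alpha$, and only the purely algebraic contributions of $\alpha$ survive.

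The key step is therefore this cancellation, after which a short bookkeeping of the remaining terms yields
\[
  (\nabla^{\alpha}_X g)(Y,Z) = 2\alpha(X)\, g(Y,Z) + \alpha(Y)\, g(X,Z) + \alpha(Z)\, g(X,Y).
\]
The converse direction of the statement is then immediate: if $\alpha \equiv 0$ then $\nabla^{\alpha} = \nabla$, so $\nabla^{\alpha} g = \nabla g = 0$. For the forward direction I would specialize the displayed formula to $X = Y = Z = v$ for an arbitrary nonzero vector $v$, obtaining $(\nabla^{\alpha}_v g)(v,v) = 4\,\alpha(v)\, |v|^2$. Assuming $\nabla^{\alpha} g = 0$ forces $\alpha(v) = 0$ for every $v \neq 0$, hence $\alpha \equiv 0$.

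I do not expect any genuine obstacle here; the entire content is the one-line computation of $\nabla^\alpha g$. The only mild care needed is the choice of test vectors in the forward direction: the diagonal substitution $X = Y = Z = v$ is preferable to choosing $Y, Z$ orthogonal to $X$, since it works uniformly in every dimension (including $n = 1$, where one cannot pick a vector orthogonal to $X$). With that choice the argument is dimension-free and complete.
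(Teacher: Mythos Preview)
Your proof is correct and follows the same approach as the paper: both compute the explicit formula $(\nabla^{\alpha}_X g)(Y,Z) = 2\alpha(X)g(Y,Z) + \alpha(Y)g(X,Z) + \alpha(Z)g(X,Y)$ and then specialize. The only difference is the choice of test vectors---the paper takes $X=Y$ with $Z\perp X$ to isolate $2\alpha(Z)|X|^2$, whereas your diagonal choice $X=Y=Z=v$ gives $4\alpha(v)|v|^2$; as you note, your specialization has the mild advantage of working when $n=1$.
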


\begin{proof}
Using the fact that $\nabla g = 0$ we have
\[
(\nabla_Z^\alpha g)(X,Y) = 2 \alpha(Z) g(X,Y) + \alpha(X)g(Z,Y) + \alpha(Y) g(X, Z). 
\]
So we have $0 = 2 \alpha(Z) g(X,Y) + \alpha(X)g(Z,Y) + \alpha(Y) g(X, Z)$. However, if we take $X=Y$ and $Z$ perpendicular to $X,Y$ we obtain $\alpha(Z) = 0$. Since we can choose $Z$ arbitrarily, this shows that $\alpha\equiv 0$. 
\end{proof}

On the other hand, from the equation above we, if we let $h = e^{-\varphi} g$ we have that 
\[
(\nabla_Z^\alpha h)(X,Y) = d\varphi(Z) h(X,Y) + d\varphi(X)h(Z,Y) + d\varphi(Y) h(X, Z).
\]
In the literature on geodesic mappings, this equation and  the formula for the $\nabla^{\alpha}$ connection are sometimes called  the \emph{Levi-Civita equations} \cite{Mikes}. 

 In particular, $(\nabla_Z^\alpha h)(X,Y) = (\nabla_Y^\alpha h)(X,Z)$. In Riemannian geometry a symmetric two-tensor $b$ that satisfies $(\nabla_Z b)(X,Y) = (\nabla_Y b)(X,Z)$ is called a Codazzi tensor. We have the following relationship between Codazzi tensors for $(M,g)$ and $\nabla^{\alpha}$, 

\begin{proposition}
Let $(M,g)$ be a manifold with density, and let $T$ be a symmetric 2-tensor. Then, $T$ is a Codazzi tensor of $(M,g)$ iff $e^{-\varphi} T$ is a Codazzi tensor with respect to the connection $\nabla^\alpha$.
\end{proposition}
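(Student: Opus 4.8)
The statement to prove is the equivalence: $T$ is a Codazzi tensor for $(M,g)$ (that is, $(\nabla_Z T)(X,Y) = (\nabla_Y T)(X,Z)$ for the Levi-Civita connection) if and only if $e^{-\varphi}T$ is a Codazzi tensor for $\nabla^\alpha$ (that is, $(\nabla^\alpha_Z (e^{-\varphi}T))(X,Y) = (\nabla^\alpha_Y (e^{-\varphi}T))(X,Z)$). The plan is a direct computation: expand the covariant derivative $(\nabla^\alpha_Z (e^{-\varphi}T))(X,Y)$ using the definition $\nabla^\alpha_U V = \nabla_U V - d\varphi(U)V - d\varphi(V)U$ together with the product rule, and then antisymmetrize in $Z$ and $Y$.

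Let me sketch the key step. Writing $S = e^{-\varphi}T$, I would compute
\[
(\nabla^\alpha_Z S)(X,Y) = D_Z\big(S(X,Y)\big) - S(\nabla^\alpha_Z X, Y) - S(X, \nabla^\alpha_Z Y).
\]
Substituting $\nabla^\alpha_Z X = \nabla_Z X - d\varphi(Z)X - d\varphi(X)Z$ (and similarly for $\nabla^\alpha_Z Y$) and collecting terms, the Levi-Civita covariant-derivative part reassembles into $(\nabla_Z S)(X,Y)$, while the correction terms are
\[
2\,d\varphi(Z)\,S(X,Y) + d\varphi(X)\,S(Z,Y) + d\varphi(Y)\,S(X,Z).
\]
Now using $S = e^{-\varphi}T$, one has $(\nabla_Z S)(X,Y) = e^{-\varphi}\big((\nabla_Z T)(X,Y) - d\varphi(Z)T(X,Y)\big)$, so the total expression becomes
\[
(\nabla^\alpha_Z S)(X,Y) = e^{-\varphi}\Big[(\nabla_Z T)(X,Y) + d\varphi(Z)T(X,Y) + d\varphi(X)T(Z,Y) + d\varphi(Y)T(X,Z)\Big].
\]

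The final step is to antisymmetrize in $Z$ and $Y$. The terms $d\varphi(Z)T(X,Y)$ and $d\varphi(Y)T(X,Z)$ are each symmetric under the swap $Z \leftrightarrow Y$ (using symmetry of $T$ in its two slots), so they cancel in the difference $(\nabla^\alpha_Z S)(X,Y) - (\nabla^\alpha_Y S)(X,Z)$; likewise the $d\varphi(X)T(Z,Y)$ term is manifestly symmetric and cancels. What survives is exactly $e^{-\varphi}\big[(\nabla_Z T)(X,Y) - (\nabla_Y T)(X,Z)\big]$. Since $e^{-\varphi}$ is nowhere zero, the $\nabla^\alpha$-Codazzi condition for $e^{-\varphi}T$ holds if and only if the ordinary Codazzi condition for $T$ holds, giving the equivalence. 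I do not expect a genuine obstacle here; the only point requiring care is bookkeeping — ensuring that the extra $d\varphi$-terms really are symmetric in the relevant pair of arguments so that they drop out under antisymmetrization, which is where the symmetry of $T$ is used essentially. This is entirely parallel to the computation already displayed just above the statement for the special case $T = g$, so the same mechanism applies.
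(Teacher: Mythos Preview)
Your proposal is correct and follows essentially the same computation as the paper: both expand $(\nabla^\alpha_Z(e^{-\varphi}T))(X,Y)$ to obtain $e^{-\varphi}\big[(\nabla_Z T)(X,Y) + d\varphi(Z)T(X,Y) + d\varphi(X)T(Z,Y) + d\varphi(Y)T(X,Z)\big]$, from which the equivalence follows by antisymmetrizing in $Y,Z$. The only cosmetic differences are that the paper pulls the factor $e^{\varphi}$ to the front at the outset and leaves the final antisymmetrization step implicit, whereas you spell it out.
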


\begin{proof}
Consider
\begin{align*}
  e^{\varphi}\left(\nabla^\alpha_X e^{-\varphi} T\right)(Y,Z) &= D_X T(Y,Z) - d\varphi(X) T(Y,Z) - T(\nabla_X^\alpha Y, Z) - T(Y, \nabla_X^\alpha Z)\\
  &= D_X T(Y,Z) - d\varphi(X) T(Y,Z) - T(\nabla_X Y, Z)\\
  &\qquad + d\varphi(X) T(Y,Z) + d\varphi(Y) T(X, Z) - T(Y, \nabla_X Z)\\
  &\qquad + d\varphi(X) T(Y,Z) + d\varphi(Z) T(Y,X)\\
  &= \left(\nabla_X T\right)(Y,Z) + d\varphi(X) T(Y,Z) + d\varphi(Y) T(X,Z)\\
  &\qquad + d\varphi(Z) T(Y,X).
\end{align*}

Therefore, $e^{-\varphi}T$ is Codazzi with respect to $\nabla^\alpha$ iff $T$ is Codazzi.
\end{proof}

One of the most common types of Codazzi tensor is a parallel 2-tensor. Which leads us to the following Corollary:

\begin{corollary}
Let $(M,g)$ be a manifold with density with an $\alpha$-parallel symmetric 2-tensor $T$, then $e^\varphi T$ is a Codazzi tensor on $(M,g)$.

In particular, if $\mathrm{Hol}^\alpha(M,g)$ is compact, then $(M,g)$ admits a positive definite Codazzi tensor. More generally, if $\mathrm{Hol}^\alpha(M,g)\subset O(p,q)$, then $(M,g)$ must admit a Codazzi tensor of signature $(p,q)$.
\end{corollary}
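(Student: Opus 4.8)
The plan is to deduce everything from the preceding proposition together with the fundamental principle recorded in Proposition~\ref{Prop:FundPrinc1}. The key initial observation is that an $\alpha$-parallel tensor is automatically Codazzi with respect to $\nabla^\alpha$: if $\nabla^\alpha T = 0$ then $(\nabla^\alpha_Z T)(X,Y) = 0 = (\nabla^\alpha_Y T)(X,Z)$ for all $X,Y,Z$, so the Codazzi symmetry holds trivially.

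First I would prove the main assertion. Write $S = e^{\varphi} T$, so that $e^{-\varphi} S = T$. By the observation above, $T = e^{-\varphi} S$ is Codazzi with respect to $\nabla^\alpha$. Applying the preceding proposition (with its tensor ``$T$'' taken to be our $S$), which asserts that $S$ is Codazzi for $(M,g)$ if and only if $e^{-\varphi} S$ is Codazzi for $\nabla^\alpha$, we conclude that $S = e^{\varphi} T$ is a Codazzi tensor on $(M,g)$.

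Next, for the compact holonomy statement I would produce the required $\alpha$-parallel tensor via Proposition~\ref{Prop:FundPrinc1}. If $\mathrm{Hol}^\alpha(M,g)$ is compact, averaging any inner product on $T_pM$ over this group yields a positive definite symmetric bilinear form on $T_pM$ that is invariant under the holonomy; by Proposition~\ref{Prop:FundPrinc1} this extends to a globally defined $\alpha$-parallel symmetric $2$-tensor $T$, which is positive definite. The first part then gives that $e^{\varphi} T$ is Codazzi, and since $e^{\varphi} > 0$ pointwise, $e^{\varphi} T$ remains positive definite, giving the claimed positive definite Codazzi tensor. The case $\mathrm{Hol}^\alpha(M,g) \subset O(p,q)$ is handled identically: the holonomy then preserves a nondegenerate symmetric form of signature $(p,q)$ on $T_pM$, so Proposition~\ref{Prop:FundPrinc1} yields an $\alpha$-parallel $2$-tensor $T$ of signature $(p,q)$, and multiplication by the strictly positive function $e^{\varphi}$ preserves the signature, so $e^{\varphi} T$ is a Codazzi tensor of signature $(p,q)$.

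Since the algebraic content is entirely carried by the preceding proposition, there is no serious analytic obstacle here. The only point requiring care is the passage from a holonomy-invariant bilinear form at a single point to a globally defined $\alpha$-parallel tensor, which is exactly what Proposition~\ref{Prop:FundPrinc1} supplies, together with the elementary remark that scaling by the strictly positive function $e^{\varphi}$ leaves positive-definiteness, and more generally the signature, unchanged.
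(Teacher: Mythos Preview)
Your proposal is correct and follows exactly the reasoning the paper intends: the corollary is stated without proof in the paper because it is an immediate consequence of the preceding proposition (a parallel tensor is trivially Codazzi, so apply the equivalence with $S=e^{\varphi}T$) together with Proposition~\ref{Prop:FundPrinc1} and the averaging argument already spelled out in the text. Your handling of the signature claim via the positivity of $e^{\varphi}$ is the only detail one might add, and you have included it.
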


For more general discussion on Codazzi tensors see \cites{Besse, Codazzi}.  It does not seem that the connection between Codazzi tensors and geodesic mappings has been emphasized in  the literature. Note that  \cite{DS} provides some useful topological consequences of the existence of Codazzi tensors, especially in dimension 4.

The assumption that $\mathrm{Hol}^\alpha$ is compact has some interesting consequences in the case of positive curvature.

\begin{reptheorem}{Thm:CptCurv}
Let $(M,g,f)$ be a manifold with density, with $\mathrm{Hol}^\alpha(M,g)$ compact. Let $\widetilde{g}$ be a metric compatible with $\nabla^\alpha$. Then $\overline{\sec}_\varphi > 0$ implies that $\sec_{\widetilde{g}} > 0$.

Furthermore if $\Ric^1_f > 0$ then $\Ric_{\widetilde{g}}>0$.

Similarly, both parts also hold for non-negative, negative and non-positive curvature.
\end{reptheorem}

\begin{remark}
Similarly, this result also works for quasi-positive and almost positive curvature.
\end{remark}

\begin{proof}
Fix a point $p\in M$ and a vector $Y\in T_p M$ with $g(Y,Y)=0$. We will show that $\overline{\sec}_\varphi^Y > 0$ implies that $\sec_{\widetilde{g}}(\cdot,Y) > 0$.

Recall that $\overline{\sec}_f^Y(X) = g(R^\alpha(X,Y)Y,X)$. Also, since $\widetilde{g}$ is compatible with $\nabla^\alpha$, we know that $\sec_{\widetilde{g}}(X,Y) = \widetilde{g}(R^\alpha(X,Y)Y,X)$. Therefore, $R^\alpha(\cdot,Y)Y:T_p M \to T_p M$ is a symmetric operator with respect to $\widetilde{g}$.

Consider a basis on $T_p M$ consisting of eigenvectors of $R^\alpha(\cdot,Y)Y$: $X_1,\ldots,X_n$. Note that $R^\alpha(Y,Y)Y = 0$, so $Y$ is an eigenvector, call it $X_1$, then $\lambda_1 = 0$. Order the remaining eigenvalues $\lambda_2 \leq \lambda_3 \leq \cdots \leq \lambda_n$.

Let $\widetilde{X_k} = a X_k + b Y$ satisfying $g(\widetilde{X_k},\widetilde{X_k}) = 1$ and $g(\widetilde{X_k},Y) = 0$. Then,
\begin{align*}
  \overline{\sec}_\varphi^Y(\widetilde{X_k}) &= g(R^\alpha(\widetilde{X_k},Y)Y,\widetilde{X_k})\\
  &= g(R^\alpha(a X_k + bY, Y)Y, \widetilde{X_k})\\
  &= g(R^\alpha(a X_k, Y)Y, \widetilde{X_k}) + g(R^\alpha(bY, Y)Y, \widetilde{X_k})\\
  &= g(a \lambda_k X_k, \widetilde{X_k})\\
  &= \lambda_k g(\widetilde{X_k} - bY, \widetilde{X_k})\\
  &= \lambda_k,
\end{align*}
so, if $\overline{\sec}_\varphi^Y > 0$, all $\lambda_k > 0$, similarly for $\overline{\sec}_\varphi^Y \geq 0$, $\overline{\sec}_\varphi^Y < 0$ and $\overline{\sec}_\varphi^Y \leq 0$.

The Ricci part of the theorem follows immediately from the fact that $\Ric_f^1 = \Ric^\alpha = \Ric_{\tilde{g}}$.
\end{proof}


\begin{bibdiv}
\begin{biblist}
\bib{BE}{article}{
   author={Bakry, D.},
   author={{\'E}mery, Michel},
   title={Diffusions hypercontractives},
   language={French},
   conference={
      title={S\'eminaire de probabilit\'es, XIX, 1983/84},
   },
   book={
      series={Lecture Notes in Math.},
      volume={1123},
      publisher={Springer},
      place={Berlin},
   },
   date={1985},
   pages={177--206},
}

\bib{Berger}{article}{
   author={Berger, Marcel},
   title={Sur les groupes d'holonomie homog\`ene des vari\'et\'es \`a
   connexion affine et des vari\'et\'es riemanniennes},
   language={French},
   journal={Bull. Soc. Math. France},
   volume={83},
   date={1955},
   pages={279--330},
}

\bib{Besse}{book}{
   author={Besse, Arthur L.},
   title={Einstein manifolds},
   series={Ergebnisse der Mathematik und ihrer Grenzgebiete (3) [Results in
   Mathematics and Related Areas (3)]},
   volume={10},
   publisher={Springer-Verlag, Berlin},
   date={1987},
   pages={xii+510},
}

\bib{Codazzi}{article}{
   author={Bivens, I.},
   author={Bourguignon, J.-P.},
   author={Derdzi{\'n}ski, A.},
   author={Ferus, D.},
   author={Kowalski, O.},
   author={Milnor, T. Klotz},
   author={Oliker, V.},
   author={Simon, U.},
   author={Str{\"u}bing, W.},
   author={Voss, K.},
   title={Discussion on Codazzi-tensors},
   conference={
      title={Global differential geometry and global analysis},
      address={Berlin},
      date={1979},
   },
   book={
      series={Lecture Notes in Math.},
      volume={838},
      publisher={Springer, Berlin-New York},
   },
   date={1981},
   pages={243--299},
}

\bib{Borell}{article}{
   author={Borell, Christer},
   title={Convex measures on locally convex spaces},
   journal={Ark. Mat.},
   volume={12},
   date={1974},
   pages={239--252},
}

\bib{BrascampLieb}{article}{
   author={Brascamp, Herm Jan},
   author={Lieb, Elliott H.},
   title={On extensions of the Brunn-Minkowski and Pr\'ekopa-Leindler
   theorems, including inequalities for log concave functions, and with an
   application to the diffusion equation},
   journal={J. Functional Analysis},
   volume={22},
   date={1976},
   number={4},
   pages={366--389},
}

\bib{Brinkmann}{article}{
   author={Brinkmann, H. W.},
   title={Einstein spaces which are mapped conformally on each other},
   journal={Math. Ann.},
   volume={94},
   date={1925},
   number={1},
   pages={119--145},
}

\bib{Bryant}{article}{
   author={Bryant, Robert},
   title={Recent advances in the theory of holonomy},
   note={S\'eminaire Bourbaki, Vol. 1998/99},
   journal={Ast\'erisque},
   number={266},
   date={2000},
   pages={Exp.\ No.\ 861, 5, 351--374},
}

\bib{Cheng}{article}{
   author={Cheng, Shiu Yuen},
   title={Eigenvalue comparison theorems and its geometric applications},
   journal={Math. Z.},
   volume={143},
   date={1975},
   number={3},
   pages={289--297},
}

\bib{DS}{article}{
   author={Derdzi{\'n}ski, Andrzej},
   author={Shen, Chun Li},
   title={Codazzi tensor fields, curvature and Pontryagin forms},
   journal={Proc. London Math. Soc. (3)},
   volume={47},
   date={1983},
   number={1},
   pages={15--26},
}

\bib{HanoOzeki}{article}{
   author={Hano, Jun-ichi},
   author={Ozeki, Hideki},
   title={On the holonomy groups of linear connections},
   journal={Nagoya Math. J.},
   volume={10},
   date={1956},
   pages={97--100},
}

\bib{KennardWylie}{article}{
   author = {Kennard, Lee},
   author = {Wylie, William},
   title = {Positive weighted sectional curvature},
   journal = {Indiana Math J.},
   note = {To appear, arXiv:1410.1558},
}

\bib{KobayashiNomizu}{book}{
   author={Kobayashi, Shoshichi},
   author={Nomizu, Katsumi},
   title={Foundations of differential geometry. Vol I},
   publisher={Interscience Publishers, a division of John Wiley \& Sons, New
   York-London},
   date={1963},
}

\bib{Klartag}{article}{
   author = {Klartag, Bo'az},
   title={Needle decompositions in Riemannian geometry},
   journal={Mem. Amer. Math. Soc.},
   note={To appear, arXiv:1408.6322},
}

\bib{KolesnikovMilman}{article}{
   author={Kolesnikov, Alexander V.},
   author={Milman, Emanuel},
   title={Poincar\'e and Brunn-Minkowski inequalities on weighted Riemannian manifolds with boundary},
   note={arXiv:1310.2526.},
}

\bib{Lichnerowicz70}{article}{
   author={Lichnerowicz, Andr{\'e}},
   title={Vari\'et\'es riemanniennes \`a tenseur C non n\'egatif},
   language={French},
   journal={C. R. Acad. Sci. Paris S\'er. A-B},
   volume={271},
   date={1970},
   pages={A650--A653},
}
 
\bib{Lichnerowicz72}{article}{
   author={Lichnerowicz, Andr{\'e}},
   title={Vari\'et\'es k\"ahl\'eriennes \`a premi\`ere classe de Chern non
   negative et vari\'et\'es riemanniennes \`a courbure de Ricci
   g\'en\'eralis\'ee non negative},
   language={French},
   journal={J. Differential Geometry},
   volume={6},
   date={1971/72},
   pages={47--94},
}

   \bib{Lott}{article}{
   author={Lott, John},
   title={Some geometric properties of the Bakry-\'Emery-Ricci tensor},
   journal={Comment. Math. Helv.},
   volume={78},
   date={2003},
   number={4},
   pages={865--883}}
   
   \bib{Lott2}{article}{
   author={Lott, John},
   title={Ricci measure for some singular Riemannian metrics},
   journal={Math. Annalen}
   note={To appear,  	arXiv:1503.04725}
   }

\bib{LV}{article}{
   author={Lott, John},
   author={Villani, C{\'e}dric},
   title={Ricci curvature for metric-measure spaces via optimal transport},
   journal={Ann. of Math. (2)},
   volume={169},
   date={2009},
   number={3},
   pages={903--991},
}
   
\bib{MeSch}{article}{
   author={Merkulov, Sergei},
   author={Schwachh{\"o}fer, Lorenz},
   title={Classification of irreducible holonomies of torsion-free affine
   connections},
   journal={Ann. of Math. (2)},
   volume={150},
   date={1999},
   number={1},
   pages={77--149},
}

\bib{Mikes}{article}{
   author={Mike{\v{s}}, J.},
   title={Geodesic mappings of affine-connected and Riemannian spaces},
   note={Geometry, 2},
   journal={J. Math. Sci.},
   volume={78},
   date={1996},
   number={3},
   pages={311--333},
}
   
\bib{Milman1}{article}{
   author={Milman, Emanuel},
   title={Beyond traditional Curvature-Dimension I: new model spaces for isoperimetric and concentration inequalities in negative dimension.},
   journal={Trans. Amer. Math. Soc.},
   note={To appear, arXiv:1409.4109},
}

\bib{Milman2}{article}{
   author={Milman, Emanuel},
   title={Harmonic Measures on the Sphere via Curvature-Dimension},
   journal={ Ann. Fac. Sci. Toulouse Math.},
   note={To appear, arXiv:1505.04335},
}

\bib{Morgan}{article}{
   author={Morgan, Frank},
   title={Manifolds with density},
   journal={Notices Amer. Math. Soc.},
   volume={52},
   date={2005},
   number={8},
   pages={853--858},
}  


\bib{MorganBook}{book}{
   author={Morgan, Frank},
   title={Geometric measure theory},
   edition={4},
   note={A beginner's guide},
   publisher={Elsevier/Academic Press, Amsterdam},
   date={2009},
   pages={viii+249},
}

\bib{Morgan3}{article}{
   author={Morgan, Frank},
   title={Manifolds with density and Perelman's proof of the Poincar\'e
   conjecture},
   journal={Amer. Math. Monthly},
   volume={116},
   date={2009},
   number={2},
   pages={134--142}
}

\bib{MunteanuWang}{article}{
      author={Munteanu, Ovidiu},
   author={Wang, Jiaping},
   title={Analysis of weighted Laplacian and applications to Ricci solitons},
   journal={Comm. Anal. Geom.},
   volume={20},
   date={2012},
   number={1},
   pages={55--94}
}  

\bib{Ohta}{article}{
   author={Ohta, Shin-ichi},
   title={$(K,N)$-convexity and the curvature-dimension condition for negative $N$},
   journal={J. Geom. Anal.},
   note={To appear, arXiv:1310.7993},
}

\bib{Ohta2}{article}{
 author={Ohta, Shin-ichi},
   title={Needle decompositions and isoperimetric inequalities in Finsler geometry},
   note={arXiv:1506.05876},
}

\bib{OhtaTakatsu1}{article}{
   author={Ohta, Shin-ichi},
   author={Takatsu, Asuka},
   title={Displacement convexity of generalized relative entropies},
   journal={Adv. Math.},
   volume={228},
   date={2011},
   number={3},
   pages={1742--1787},
}

\bib{OhtaTakatsu2}{article}{
   author={Ohta, Shin-Ichi},
   author={Takatsu, Asuka},
   title={Displacement convexity of generalized relative entropies. II},
   journal={Comm. Anal. Geom.},
   volume={21},
   date={2013},
   number={4},
   pages={687--785},
}

\bib{Petersen}{book}{
   author={Petersen, Peter},
   title={Riemannian geometry},
   series={Graduate Texts in Mathematics},
   volume={171},
   edition={2},
   publisher={Springer},
   place={New York},
   date={2006},
}
   
\bib{Sturm1}{article}{
   author={Sturm, Karl-Theodor},
   title={On the geometry of metric measure spaces. I},
   journal={Acta Math.},
   volume={196},
   date={2006},
   number={1},
   pages={65--131},
}
   
\bib{Sturm2}{article}{   
   author={Sturm, Karl-Theodor},
   title={On the geometry of metric measure spaces. II},
   journal={Acta Math.},
   volume={196},
   date={2006},
   number={1},
   pages={133--177},
}

\bib{Tashiro}{article}{
   author={Tashiro, Yoshihiro},
   title={Complete Riemannian manifolds and some vector fields},
   journal={Trans. Amer. Math. Soc.},
   volume={117},
   date={1965},
   pages={251--275},
}

\bib{WeiWylie}{article}{
   author={Wei, Guofang},
   author={Wylie, Will},
   title={Comparison geometry for the Bakry-Emery Ricci tensor},
   journal={J. Differential Geom.},
   volume={83},
   date={2009},
   number={2},
   pages={377--405},}  
   
\bib{Weyl}{article}{
   author={Weyl, H.},
   title ={Zur Infinitesimalgeometrie: Einordnung der projektiven und der konformen Auffassung},
   journal= {G\"ottingen Nachrichten},
   date={1921},
   pages={ 99-121},
}

\bib{WoolgarWylie}{article}{
   author={Woolgar, Eric},
   author={Wylie, William},
   title={Cosmological singularity theorems and splitting theorems for
   $N$-Bakry-\'Emery spacetimes},
   journal={J. Math. Phys.},
   volume={57},
   date={2016},
   number={2},
}
   
\bib{WylieSplitting}{article}{
   author={Wylie, William},
   title={A warped product version of the Cheeger-Gromoll splitting theorem},
   note={arXiv:1506.03800},
}

\bib{WylieSec}{article}{
   author={Wylie, William},
   title={Sectional curvature for Riemannian manifolds with density},
   journal={Geometriae Dedicata}, 
   volume={178},
   number={1},
   date={2015},
   pages={151-169},
}

\bib{Yang}{article}{
   author={Yang, Ning},
   title={A note on nonnegative Bakry-\'Emery Ricci curvature},
   journal={Arch. Math. (Basel)},
   volume={93},
   date={2009},
   number={5},
   pages={491--496},
}

\bib{Zhu}{article}{
   author={Zhu, Shunhui},
   title={The comparison geometry of Ricci curvature},
   conference={
      title={Comparison geometry},
      address={Berkeley, CA},
      date={1993--94},
   },
   book={
      series={Math. Sci. Res. Inst. Publ.},
      volume={30},
      publisher={Cambridge Univ. Press, Cambridge},
   },
   date={1997},
   pages={221--262},
}
\end{biblist}
\end{bibdiv}

\end{document}